\title{{\bf Splitting via Noncommutativity}}
\author{  {\bf M. L. Lewis}, \  {\bf D. V. Lytkina},  \ {\bf V. D. Mazurov}, \\[0.3cm]  and  {\bf A. R.  Moghaddamfar}
}
\newenvironment{proof}{\noindent {\em {Proof}}.}{$\square$
\medskip}
\newtheorem{theorem}{Theorem}[section]
\newtheorem{corollary}[theorem]{Corollary}
\newtheorem{lm}[theorem]{Lemma}
\begin{document}
\newcommand{\f}{\frac}
\newcommand{\sta}{\stackrel}
\maketitle

\begin{abstract}
\noindent
Let $G$ be a nonabelian group and $n$ a natural number.  We say that $G$ has a strict $n$-split decomposition if it
can be partitioned as the disjoint union of an abelian subgroup $A$ and $n$ nonempty subsets $B_1, B_2, \ldots, B_n$,
such that $|B_i| > 1$ for each $i$ and within each set $B_i$, no two distinct elements commute.
We show that every finite nonabelian group has a strict $n$-split decomposition for some $n$. We classify all finite groups
$G$, up to isomorphism, which have a strict $n$-split decomposition for $n = 1, 2, 3$.  Finally, we show that for a nonabelian
group $G$ having a strict $n$-split decomposition, the index $|G:A|$ is bounded by some function of $n$.
\end{abstract}

{\em Keywords}: strict $n$-split decomposition, simple group, commuting graph.

\renewcommand{\baselinestretch}{1}
\def\thefootnote{ \ }
\footnotetext{{\em $2010$ Mathematics Subject Classification}:
20D05.}

\section{Introduction}
Throughout this paper, all groups are finite.  We are interested in studying how to split up a nonabelian group into
disjoint subsets where the members of the subsets do not commute with each other.  Let $G$ be a nonabelian
group and $n$ a natural number.  An {\it $n$-split decomposition of $G$} is the disjoint union of an abelian subgroup
$A$ and $n$ nonempty subsets $B_1, B_2, \ldots, B_n$ such that for each $i$ no two distinct elements of the set $B_i$ commute.
When $G$ has an $n$-split decomposition, we will denote the decomposition by $G = A \uplus B_1 \uplus B_2 \uplus \cdots \uplus B_n$.
We will say that the $n$-split decomposition is {\it strict} if $|B_i| > 1$ for each $i$ with $1 \leqslant i \leqslant n$.  When $n = 1$,
we simply say $G = A \uplus B_1$ is a split decomposition of $G$.

We note that groups having split decompositions have previously been studied in \cite{ma},
and this paper arose as a generalization suggested by Prof. Isaacs when reading that paper.
We note that $n$-split decompositions are of interest when consider the commuting graph of a group.
If $G$ is a nonabelian group, then the commuting graph $\Delta (G)$ of $G$  is the graph whose vertex set is
$G \setminus Z(G)$ and there is an edge between $x$ and $y$ if $x$ and $y$ commute.
An $n$-split decomposition of $G$ corresponds to a partition of $\Delta (G)$ where $A \setminus Z(G)$ is a
complete subgraph and the $B_i$'s are disjoint independent subsets.  Saying the $n$-split decomposition is
strict is equivalent to requiring each of the independent subsets to have at least $2$-vertices.

Notice that that every group trivially has a $(|G| - 1)$-decomposition by taking $A = 1$ and taking the $B_i$'s to be
the singleton sets $\{ g \}$ as $g$ runs over the nonidentity elements of $G$.  To eliminate this trivial decomposition,
it is reasonable to ask which groups have a strict $n$-split decomposition for some $n$.  We will prove that {\em all}
nonabelian groups have a strict $n$-split decomposition for some positive integer $n$ and some abelian subgroup $A$.

\begin{theorem}\label{one}
If $G$ is a nonabelian group, then there is an abelian subgroup $A$ and a positive integer $n$ so that $G$ has a strict $n$-split decomposition with respect to $A$.
\end{theorem}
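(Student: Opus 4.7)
The plan is to take $A = Z(G)$ and reduce the theorem to a partition problem for the non-commuting graph of $G$. Write $V := G \setminus Z(G)$ and let $\Gamma$ be the graph with vertex set $V$ whose edges record non-commuting pairs; a strict $n$-split decomposition with respect to this $A$ is precisely a partition of $V$ into cliques of $\Gamma$, each of size at least $2$.

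The first step is a min-degree bound for $\Gamma$. For any $g \in V$ the centralizer $C_G(g)$ is a proper subgroup of $G$, so $|C_G(g)| \leq |G|/2$ and the number of non-commuters of $g$ lying in $V$ equals $|G| - |C_G(g)| \geq |G|/2$. Setting $m := |V|$, this yields $\deg_\Gamma(g) \geq (m + |Z(G)|)/2 \geq (m+1)/2$. The classical fact that $G/Z(G)$ is never a nontrivial cyclic group forces $[G:Z(G)] \geq 4$, so $m \geq 3|Z(G)| \geq 3$.

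The remainder is pure graph theory. Since $\deg_\Gamma \geq m/2$ and $m \geq 3$, Dirac's theorem gives a Hamilton cycle in $\Gamma$. If $m$ is even, alternating edges of this cycle form a perfect matching, and the matching edges are the $B_i$. If $m$ is odd, fix any $v \in V$; then $\Gamma - v$ has $m-1$ vertices (even) and min degree at least $(m-1)/2$, so Dirac delivers a perfect matching $M$ on $\Gamma - v$. To finish I \emph{absorb} $v$: it has at most $(m-3)/2$ non-neighbors among the $m-1$ other vertices while $M$ has $(m-1)/2$ edges, so some edge $\{u,w\} \in M$ has both endpoints adjacent to $v$; then $\{u,v,w\}$ is a triangle of $\Gamma$, and $(M \setminus \{\{u,w\}\}) \cup \{\{u,v,w\}\}$ is the required partition. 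The few tiny values of $m$ where Dirac's hypothesis on $\Gamma - v$ is borderline can be checked directly from the min-degree inequality.

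The principal obstacle is the odd case; the absorbing step hinges on the strict improvement $\deg_\Gamma(g) \geq (m+1)/2 > m/2$, which comes for free from $|Z(G)| \geq 1$. The even case is immediate from Dirac plus standard matching extraction.
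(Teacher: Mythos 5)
Your proof is correct, but it takes a genuinely different route from the paper's. The paper fixes a \emph{maximal} abelian subgroup $A$, notes $C_A(g)<A$ for $g\notin A$, and partitions each nontrivial coset $Ag$ into $|C_A(g)|$ noncommuting blocks of size $|A:C_A(g)|>1$ via an explicit coset computation (Lemmas \ref{lm-eee0} and \ref{lm-eeee0}); the blocks are therefore pieces of cosets of $A$. You instead take $A=Z(G)$ and convert the problem into partitioning the noncommuting graph on $G\setminus Z(G)$ into cliques of size at least $2$, which you solve by a Dirac-type argument: the bound $|C_G(g)|\leqslant |G|/2$ for noncentral $g$ gives minimum degree at least $(m+1)/2$, a Hamilton cycle yields a perfect matching when $m$ is even, and for odd $m$ your absorption count ($v$ has at most $(m-3)/2$ non-neighbours versus $(m-1)/2$ matching edges) correctly produces one triangle plus a matching; the only borderline case $m=3$ is harmless since there $\Gamma$ is complete (and in fact $m=3$ cannot occur for a group). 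Each approach buys something: yours shows the stronger-sounding fact that the \emph{center} always works as the abelian part --- the smallest $A$ permitted by Lemma \ref{lm-e0}(5) --- and needs no structure beyond the centralizer index bound, at the cost of importing Dirac's theorem and producing roughly $|G\setminus Z(G)|/2$ blocks, all of size $2$ or $3$. The paper's argument is elementary and self-contained, typically yields fewer and larger blocks ($n\leqslant(|G|-|A|)/2$ with $|A|\geqslant|Z(G)|$), and, more importantly for the rest of the paper, its coset machinery is reused repeatedly in the later classification results, which your graph-theoretic construction would not support.
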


We will see that usually there is no unique (strict) $n$-split decomposition for a given group.
Moreover, we will see that for most groups that have a (strict) $n$-split decomposition will also have a (strict) $(n+1)$-split decomposition.
The question arises, given a group $G$, what is the smallest integer $n$ so that $G$ has a (strict) $n$-split decomposition.
For some classes of groups, we are able to answer this question.

\begin{theorem}\label{two}
For each of the following groups, $n$ is the minimal integer so that $G$ has a (strict) $n$-split decomposition.
\begin{enumerate}
\item If $G$ is a Frobenius group with abelian Frobenius kernel $A$ and an abelian Frobenius complement, then $n = |G:A| - 1$.
\item If $G = L_2 (2^m)$, then $n = 2^m$.
\item If $p$ is an odd prime, $q$ is a power of $p$, and $G = L_2 (q)$, then $n = q-1$.
\item If $p$ is an odd prime, $q$ is a power of $p$, and $G = {\rm PGL}_2 (q)$, then $n = q$.
\item If $G={\rm Sz}(2^{2m+1})$, $m\geqslant 1$, then $n=2^{2m+2}-1$.
\end{enumerate}
\end{theorem}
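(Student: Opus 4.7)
The plan is to translate the problem into graph coloring: a strict $n$-split decomposition of $G$ with abelian part $A$ corresponds to a proper $n$-coloring of the commuting graph $\Delta(G)$ restricted to $G \setminus A$ in which no color class is a singleton. For each family I would establish the claimed $n$ by combining (a) an explicit strict $n$-split decomposition with respect to a natural choice of $A$, giving the upper bound, with (b) a uniform lower bound showing that for every abelian subgroup $A'$ of $G$ the commuting graph on $G \setminus A'$ contains a clique of size $n$.

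For the upper bound I would exploit the rich structure of maximal abelian subgroups in each family. In case (1), take $A$ to be the Frobenius kernel; the $|A|$ conjugates $H^g \setminus \{1\}$ form disjoint cliques of size $|H|-1 = |G:A|-1$ that cover $G \setminus A$, and a transversal coloring makes each $B_i$ a system of representatives of these conjugates, of size $|A| \geq 2$. In cases (2)--(4), take $A$ to be a Sylow $p$-subgroup, with $p$ the defining characteristic. For $L_2(2^m)$ the maximal abelian subgroups are pairwise TI and partition $G\setminus\{1\}$, so the commuting graph on $G\setminus A$ is literally a disjoint union of cliques whose maximum size is $2^m$ (realised by a nonsplit torus of order $2^m+1$); this pins down the chromatic number exactly. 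Analogous arguments for $L_2(q)$ and ${\rm PGL}_2(q)$ with $q$ odd give maximum cliques of size $q-1$ (another Sylow $p$-subgroup) and $q$ (a nonsplit torus of order $q+1$), respectively. For ${\rm Sz}(2^{2m+1})$ with $q = 2^{2m+1}$, I would take $A$ to be a cyclic torus of order $q-1$, so that the abelian subgroup $\langle x, Z(P)\rangle$ of a Sylow $2$-subgroup $P$ (for any $x \in P \setminus Z(P)$) has order $2q$ and intersects $A$ trivially, supplying cliques of size $2q-1 = 2^{2m+2}-1$.

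For the lower bound I would show that for every abelian subgroup $A'$ of $G$ there exists a maximal abelian subgroup $M$ of $G$ with $|M \setminus A'| \geq n$, giving the required clique in the commuting graph on $G \setminus A'$. The input is that in these families, maximal abelian subgroups are TI or nearly TI, so if an abelian $A'$ is not contained in $M$ then $A'$ meets $M$ trivially (or in a small controlled subgroup); since the conjugacy classes of maximal abelian subgroups are large, one can always locate some $M$ disjoint from $A'$ of the required size. The strictness condition $|B_i| \geq 2$ is then easy to verify, as all the color classes in the upper-bound construction are large.

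The main obstacle I expect is the lower bound, especially in the odd characteristic $L_2$ and ${\rm PGL}_2$ cases and in the Suzuki case. In odd characteristic, involutions sit inside dihedral centralizers which contain multiple abelian subgroups (Klein four-groups together with cyclic tori), so not all maximal abelian subgroups are TI, and an abelian $A'$ may lie across two of them. For ${\rm Sz}(q)$, the Sylow 2-subgroup is non-abelian of order $q^2$ with centre of order $q$, so abelian subgroups of $G$ contained in it form a layered family (subsets of $Z(P)$, or of the form $\langle x, Z(P)\rangle$, etc.) that must be analysed separately. The technical core of the proof will be to enumerate these possibilities for $A'$ and, in each case, exhibit a maximal abelian subgroup of $G$ meeting $A'$ trivially (or in a suitably small piece) whose complement in $G\setminus A'$ has size at least $n$.
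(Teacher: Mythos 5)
Your overall strategy coincides with the paper's: in every case the lower bound comes from locating a large abelian subgroup with a conjugate meeting $A'$ trivially and applying Lemma \ref{lm-e0} (1) (an abelian Frobenius complement in case (1), the tori $D$ and $F$ in Lemma \ref{l7-1}, and the subgroup $\langle Z(P),x\rangle$ of order $2q$ in Lemma \ref{l7-3}), and the upper bound comes from an explicit transversal-style coloring of the TI-partition of $G$. Your choices of $A$ differ harmlessly from the paper's (which uses the torus $F$ for $L_2(q)$ and ${\rm PGL}_2(q)$, and $A=1$ for ${\rm Sz}(q)$), and your lower-bound half is essentially complete: the relevant subgroups are TI with far more conjugates than a bounded $A'$ can meet, so a conjugate disjoint from $A'$ always exists. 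That is in fact the easy half of the argument, not the ``main obstacle'' you predict.

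The genuine gap is on the upper-bound side, in exactly the cases you defer. The principle you lean on --- that the commuting graph on $G\setminus A$ is a disjoint union of cliques, so a proper coloring with maximum-clique-many colors exists automatically --- is valid only in characteristic $2$. For $q$ odd the centralizer of an involution of $L_2(q)$ or ${\rm PGL}_2(q)$ is dihedral, so the unique involutions of distinct conjugates of the even-order torus commute with one another in Klein four-subgroups; a transversal of that conjugacy class is therefore \emph{not} a noncommuting set, and one must say explicitly how the involutions are to be distributed among the $B_i$ (this is precisely the delicate point in the construction of Lemma \ref{l7-1}, where the blanket assertion that $C$, $D$, $F$ centralize all of their nonidentity elements already needs qualification for the involution). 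For ${\rm Sz}(q)$ the Sylow $2$-subgroup $P$ is nonabelian of order $q^2$ with $|Z(P)|=q$ and every element outside $Z(P)$ of order $4$, and the partition of $P\setminus\{1\}$ into $2q-1$ noncommuting pieces --- $q-1$ transversal sets from $Z(P)$ together with $q$ sets built from the cosets $x_jZ(P)$ using $C_G(x)=\langle Z(P),x\rangle$ --- is the technical core of Lemma \ref{l7-3} and is entirely absent from your plan. Until these colorings are actually produced, your argument establishes the claimed values of $n$ only as lower bounds in cases (3), (4) and (5).
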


For small values of $n$, we are able to obtain the structure of groups $G$ which have a strict $n$-split decomposition.
In particular, we present the classification for $n = 1, 2, 3$.  Notice that Theorem \ref{two} (1) shows that it is not possible to bound $|A|$
in terms of $n$ when $G$ has an $n$-split decomposition.  On the other hand, the classifications for $n = 1, 2, 3$
suggest that it may be possible to bound $|G:A|$ in terms of $n$ when $G$ has an $n$-split decomposition.
In our final theorem, we shall show that this is in fact true.

\begin{theorem}\label{three}
There exists a positive integer valued function $f$ defined on the positive integers so that if $G$ has an $n$-split decomposition, then $|G:A| \leqslant f(n)$.
\end{theorem}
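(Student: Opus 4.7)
The plan is to extract from the decomposition a single combinatorial inequality and then bound $|G:A|$ by case analysis on $|A|$. Any abelian subgroup $H\le G$ meets each $B_i$ in at most one element (its elements pairwise commute while those of $B_i$ pairwise don't), which gives the key inequality
\[
|H:H\cap A| \le n+1.
\]
Two routine reductions simplify the setup. First, absorbing central elements of $G$ lying inside any $B_i$ into $A$ (such an element forces $|B_i|=1$) yields $Z(G)\le A$. Second, replacing $A$ by a maximal abelian subgroup $A^*\supseteq A$ and each $B_i$ by $B_i\setminus A^*$ only decreases $n$, and the key inequality applied to $A^*$ gives $|A^*:A|\le n+1$, so a bound on $|G:A^*|$ in terms of $n$ bounds $|G:A|$ as well. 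After these reductions, $A$ is maximal abelian and hence self-centralizing.

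I split into cases according to $|A|$. If $|A|\le n$, then the key inequality gives $|H|\le n(n+1)$ for every abelian subgroup $H$ of $G$. This bounds the exponent of $G$ and the set of primes dividing $|G|$; moreover, each Sylow $p$-subgroup has maximal abelian subgroups of bounded order, so by classical inequalities for $p$-groups in terms of their maximal abelian subgroups, each Sylow is bounded in terms of $n$. Assembling these gives $|G|\le f_1(n)$.

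If $|A|>n$, then applying the key inequality to each conjugate $A^g$ gives $|A:A\cap A^g|=|A^g:A^g\cap A|\le n+1$ for every $g\in G$. This ``almost normality'' of $A$ should force the normal core $A_G=\bigcap_{g\in G} A^g$ to satisfy $|A:A_G|\le f_2(n)$. Self-centralization then gives that $G/A_G$ acts faithfully by conjugation on the abelian group $A/A_G$, so $|G:A_G|\le|\mathrm{Aut}(A/A_G)|\le f_3(n)$ and hence $|G:A|\le f(n)$.

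The principal obstacle is the almost-normality step in the second case: naive iteration of intersections gives only $|A:A_G|\le(n+1)^{\log_2|A|}$, a bound depending on $|A|$ rather than just $n$. Overcoming this requires a more careful argument --- exploiting that the family $\{A\cap A^g\}$ is $G$-invariant to show the chain of successive intersections $A\supseteq A\cap A^{g_1}\supseteq A\cap A^{g_1}\cap A^{g_2}\supseteq\cdots$ can be chosen to stabilize within a number of steps depending only on $n$ --- or alternatively an appeal to a theorem on almost-normal subgroups in finite groups. Once this step is in place, the rest of the argument is routine.
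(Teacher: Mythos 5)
Your reductions and your first case ($|A|\leqslant n$) are sound and essentially match the paper's opening moves: the paper likewise reduces to bounding all abelian subgroups of $G$ (using the fact that $|G|\leqslant m!$ when every abelian subgroup has order at most $m$) and treats the primes not dividing $|A|$ by your Sylow argument. But your second case, which is where the theorem actually lives, has a fatal defect beyond the almost-normality gap you flagged. If $A$ is normal in $G$ --- and this is the main sub-case, indeed the only hard one, as the Frobenius-kernel examples of Theorem \ref{two}(1) show --- then $A_G=A$, the quotient $A/A_G$ is trivial, and the claim that ``$G/A_G$ acts faithfully on $A/A_G$'' would force $G=A$. So the endgame via $|\mathrm{Aut}(A/A_G)|$ proves nothing precisely in the case of a large normal self-centralizing abelian $A$. (Faithfulness is also false for non-normal $A$, since $A/A_G$ always lies in the kernel of the conjugation action of $G/A_G$ on $A/A_G$.)

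Two further points. First, the almost-normality detour is unnecessary if you use the full strength of the counting you started from: an abelian $H\not\leqslant A$ has $|H\setminus(H\cap A)|\leqslant n$, i.e.\ $|H\cap A|\,(|H:H\cap A|-1)\leqslant n$, which gives $|H|\leqslant 2n$ and not merely $|H:H\cap A|\leqslant n+1$. Applied to a conjugate $A^g\not\leqslant A$, this shows that a \emph{non-normal} $A$ automatically satisfies $|A|\leqslant 2n$, so all abelian subgroups of $G$ are bounded by $2n$ and that case closes exactly as your first case does; no core argument is needed. Second, the genuinely missing idea --- the one the paper supplies --- is how to handle a normal $A$ of unbounded order: reduce to Sylow subgroups (hence to $p$-groups with $p\leqslant n$), and produce a normal subgroup $V\leqslant A$ of $G$ with $|V|\leqslant n^2$ and $C_G(V)=A$. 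This is done by choosing $U\leqslant A$ maximal among normal subgroups of $G$ with $C_G(U)\not\leqslant A$ (the counting inequality forces $|U|\leqslant n$, since $U\langle g\rangle$ is abelian and not contained in $A$ for $g\in C_G(U)\setminus A$) and then passing to a normal $V$ with $U<V\leqslant A$ and $|V:U|=p$; maximality of $U$ gives $C_G(V)=A$, whence $G/A$ embeds in $\mathrm{Aut}(V)$ and $|G:A|\leqslant (n^2)!$. Your sketch contains no substitute for this step.
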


\section{The existence of a strict $n$-split decomposition}

In this section, we consider the problem of the existence of a strict $n$-split decomposition for each nonabelian group. We begin with the following observation about when elements commute.

\begin{lm}\label{lm-eee0}
Let $A \subset G$ be an abelian subgroup and fix elements $a_1, a_2 \in A$ and $g \in G$. Then $a_1g$ and $a_2g$ commute if and only if $C_A(g) a_1 = C_A(g) a_2$.
\end{lm}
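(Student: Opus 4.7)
The plan is to reduce the commutation condition for $a_1 g$ and $a_2 g$ to the statement that $a_2^{-1} a_1$ centralizes $g$, and then to translate that centralizer condition into a coset equality. First I would multiply out
\[ (a_1 g)(a_2 g) = (a_2 g)(a_1 g) \]
and cancel the rightmost $g$ (legal since $g$ is invertible) to obtain the equivalent identity $a_1 g a_2 = a_2 g a_1$. This first reduction is plainly an ``if and only if''.

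Next I would left-multiply by $a_2^{-1}$ and right-multiply by $a_1^{-1}$ to get
\[ a_2^{-1} a_1 \, g \, a_2 a_1^{-1} = g, \]
and then use that $A$ is abelian to rewrite $a_2 a_1^{-1}$ as $a_1^{-1} a_2 = (a_2^{-1} a_1)^{-1}$. Setting $c := a_2^{-1} a_1 \in A$, this becomes $c g c^{-1} = g$, i.e., $c \in C_G(g)$; since $c \in A$, equivalently $c \in C_A(g)$. Finally, I would translate this membership into the claimed coset equality: because $A$ is abelian, $C_A(g)$ is a subgroup of $A$ in which left and right cosets coincide, so by the standard coset criterion $a_2^{-1} a_1 \in C_A(g)$ is equivalent to $C_A(g) a_1 = C_A(g) a_2$. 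Every step is reversible, giving the ``if and only if'' in the lemma.

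I do not foresee a genuine obstacle; the argument is a single algebraic manipulation plus a translation into coset language. The only points worth guarding against are (i) invoking abelianness of $A$ at the correct place — namely, to swap $a_2$ past $a_1^{-1}$ so that the conjugating element is a single group element $c \in A$ — and (ii) checking that each rearrangement is an equivalence so that both directions of the biconditional are obtained simultaneously rather than only one.
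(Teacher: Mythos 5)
Your proposal is correct and follows essentially the same chain of equivalences as the paper's proof: cancel $g$, conjugate to isolate $a_2a_1^{-1}$ (equivalently $a_2^{-1}a_1$, using that $A$ is abelian), recognize membership in $C_A(g)$, and apply the standard coset criterion. No gaps.
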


\begin{proof} This holds by direct calculations. In fact, we have
$$\begin{array}{lll}
a_1 g a_2 g = a_2 g a_1 g & \Longleftrightarrow & a_1 g a_2 = a_2 g a_1 \\[0.2cm]
& \Longleftrightarrow &  a_2^{-1} a_1 g a_2 a_1^{-1} = g \\[0.2cm]
& \Longleftrightarrow & (a_2 a_1^{-1})^{-1} g a_2 a_1^{-1} = g \\[0.2cm]
& \Longleftrightarrow & a_2 a_1^{-1} \in C_A(g) \\[0.2cm]
& \Longleftrightarrow &  C_A(g) a_1 = C_A(g) a_2.
\end{array}$$
The result follows. \end{proof}

We next show how to partition the cosets of an abelian group into noncommuting sets.
A set is {\em noncommuting} if no pair of distinct elements in the set commute. Similarly, a set is {\em commuting} if all
pairs of elements commute.

\begin{lm}\label{lm-eeee0}
Let $A \subset G$ be an abelian subgroup, and fix an element $g\in G$. Let $n = |C_A(g)|$.  Then there exist noncommuting subsets $B_1, \ldots, B_n$ such that $Ag = \uplus_{i=1}^{n} B_i$, with $|B_i| = |A:C_A (g)|$ for all $i = 1, \dots, n$.
\end{lm}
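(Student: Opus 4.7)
The plan is to leverage Lemma \ref{lm-eee0} to translate the commuting relation on the coset $Ag$ into a coset-membership condition inside $A$, and then use a transversal to partition $Ag$ accordingly.

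First, by Lemma \ref{lm-eee0}, two elements $a_1 g, a_2 g \in Ag$ commute if and only if $a_1$ and $a_2$ lie in the same right coset of $C_A(g)$ in $A$. Thus ``commuting'' is an equivalence relation on $Ag$ whose classes are exactly the sets $C_A(g) a \cdot g$ as $a$ ranges over $A$; there are $|A : C_A(g)|$ such classes, each of size $|C_A(g)|$. A subset $B \subseteq Ag$ is noncommuting if and only if it meets each class in at most one element, so partitioning $Ag$ into noncommuting subsets is equivalent to a ``transversal coloring'' of these classes.

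Next, I construct such a partition explicitly. Set $n = |C_A(g)|$, write $C_A(g) = \{c_1, \ldots, c_n\}$, and pick a right transversal $\{t_1, \ldots, t_k\}$ for $C_A(g)$ in $A$, where $k = |A : C_A(g)|$. Since every element of $A$ has a unique expression $c_i t_j$, we can define
\[
B_i = \{\, c_i t_j g : 1 \leqslant j \leqslant k \,\} \qquad (1 \leqslant i \leqslant n).
\]
Then $|B_i| = k = |A : C_A(g)|$ and the $B_i$'s partition $Ag$ by uniqueness of the decomposition $a = c_i t_j$.

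Finally, I verify that each $B_i$ is noncommuting. For distinct indices $j_1 \neq j_2$, Lemma \ref{lm-eee0} tells us that $c_i t_{j_1} g$ and $c_i t_{j_2} g$ commute iff $C_A(g)\, c_i t_{j_1} = C_A(g)\, c_i t_{j_2}$, which simplifies (absorbing $c_i \in C_A(g)$) to $C_A(g)\, t_{j_1} = C_A(g)\, t_{j_2}$; but this fails since $t_{j_1}, t_{j_2}$ are distinct transversal elements. There is no real obstacle here: the whole argument is essentially a change of coordinates on $A$ via the subgroup $C_A(g)$, and the only thing to be careful about is keeping the direction of cosets straight so that the factor $c_i$ may indeed be absorbed when comparing cosets in the verification step.
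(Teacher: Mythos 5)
Your proof is correct and follows essentially the same route as the paper: label $C_A(g)$, pick a transversal, form each $B_i$ by multiplying a fixed $c_i$ against the whole transversal and then by $g$, and invoke Lemma \ref{lm-eee0} to see that distinct transversal elements give noncommuting products. The extra remark that commuting is an equivalence relation on $Ag$ with classes $C_A(g)a\cdot g$ is a nice clarification but not a different argument.
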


\begin{proof}
We write $C_A (g) = \{c_1, \ldots, c_n\}$. Let $\{a_1, \ldots, a_l\}$ be a transversal for $C_A(g)$ in $A$. So, we have
$$
A = \left\{ c_ia_j \mid i = 1, \ldots, n; \ j=1, \ldots, l \right\}.
$$
For $i=1, \ldots, n$,  set
$$
B_i = \{ c_i a_1 g, \ c_i a_2 g,  \ \ldots, \ c_i a_l g \}.
$$
Then, clearly $Ag = \uplus_{i=1}^{n} B_i$.  Moreover, when, $j_1 \ne j_2$, it follows that $c_i a_{j_1}$ and $c_ia_{j_2}$ lie in different cosets of $C_A(g)$, so $c_ia_{j_1}g$ and $c_ia_{j_2}g$ do not commute by Lemma \ref{lm-eee0}. Therefore, we conclude $B_i$ is a noncommuting set and $|B_i| = l = |A:C_A (g)|$. This completes the proof.
\end{proof}

We are now able to prove that every  nonabelian group has a strict $n$-split decomposition with respect to $A$ for some integer $n$ and some abelian subgroup $A$.

\begin{theorem}\label{th-0}
If $G$ is a nonabelian group, then there exists an abelian subgroup $A$ of $G$ and a positive integer $n$ so that $G$ has a strict $n$-split decomposition with respect to $A$.
\end{theorem}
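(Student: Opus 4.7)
The plan is to choose $A$ to be a \emph{maximal} abelian subgroup of $G$ and then apply Lemma~\ref{lm-eeee0} coset by coset. The point of maximality is that it will force $C_A(g) \lneq A$ for every $g \notin A$, which is exactly what is needed for the ``strict'' condition $|B_i| > 1$.

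First I would fix $A$ to be any abelian subgroup of $G$ that is not properly contained in a larger abelian subgroup (such a subgroup exists because $G$ is finite, and since $\{1\}$ is abelian). A standard argument shows that $C_G(A) = A$: if $x \in C_G(A)$ then $\langle A, x\rangle$ is abelian (every generator of $A$ commutes with $x$, and $A$ itself is abelian), so by maximality $\langle A, x\rangle = A$ and $x \in A$. The reverse inclusion $A \subseteq C_G(A)$ is immediate.

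Next, because $G$ is nonabelian we have $A \neq G$, so there is at least one coset $Ag$ different from $A$. For each such coset, $g \notin A = C_G(A)$, hence $C_A(g)$ is a proper subgroup of $A$, and therefore $|A : C_A(g)| \geqslant 2$. Applying Lemma~\ref{lm-eeee0} to the coset $Ag$ produces a partition
$$Ag = B_1^{(g)} \uplus B_2^{(g)} \uplus \cdots \uplus B_{|C_A(g)|}^{(g)}$$
into noncommuting subsets, each of size $|A : C_A(g)| > 1$.

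Finally, I would choose a transversal $T$ for $A$ in $G$ with $1 \in T$ and write
$$G \;=\; A \;\uplus\; \biguplus_{g \in T \setminus \{1\}} \; \biguplus_{i=1}^{|C_A(g)|} B_i^{(g)}.$$
Relabeling the $B_i^{(g)}$ as $B_1, \ldots, B_n$ where $n = \sum_{g \in T\setminus\{1\}} |C_A(g)|$ gives a strict $n$-split decomposition of $G$ with respect to $A$. The only substantive step is the maximal-abelian/self-centralizing observation; everything else is a bookkeeping assembly of the two preceding lemmas, so I do not expect any serious obstacle here.
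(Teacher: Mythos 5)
Your proposal is correct and follows essentially the same route as the paper: take a maximal abelian subgroup $A$, observe that $C_A(g) < A$ for every $g \notin A$ (the paper phrases this via $\langle A, g\rangle$ being nonabelian, you via $C_G(A)=A$, which is the same fact), and then apply Lemma \ref{lm-eeee0} coset by coset to assemble the strict decomposition. No substantive difference.
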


\begin{proof}
Let $A$ be a maximal abelian subgroup of $G$.  It follows that for every element $g \in G \setminus A$, the group $\langle A, g \rangle$ is not abelian, and so $C_A(g) < A$.  Let $\{g_1, g_2, \ldots, g_m\}$ be a transversal for $A$ in $G$ with $g_1 \in A$. Clearly, for every value $i$ with $1 < i \leqslant m$, we have $|A : C_A (g_i)| > 1$.  Furthermore, by Lemma \ref{lm-eeee0}, for each integer $i$ with $1 < i \leqslant m$, there exist noncommuting subsets $B_{i,1}, \ldots, B_{i, l_i}$ so that $Ag_i = \uplus_{j=1}^{l_i} B_{i, j}$, where $l_i = |C_A (g_i)|$ and $|B_{i,j}| = |A : C_A (g_i)| > 1$.  It follows that $$G = A \uplus \biguplus_{i=2}^{m} \biguplus_{j=1}^{l_i} B_{i,j}.$$  This shows that $G$ has a strict $n$-split decomposition where $n = \sum_{i=2}^{m} l_i$.
\end{proof}

We make several observations here.  First, note that our proof can be modified to say: {\em Let $A$ be a maximal abelian subgroup of  $G$, then there is a positive integer $n$ so that $G$ has a strict $n$-split decomposition with respect to $A$}.  We will see that $G$ may also have strict split decompositions with respect to abelian subgroups that are not maximal.  See for example, Lemma 4.4 (4) where we have a strict $2$-split decomposition for $S_3$ with the trivial abelian subgroup which is definitely not a maximal abelian subgroup. We will also see that there are abelian subgroups for which the group will have no strict split decompositions.  In particular, in Lemma 3.1 (5), we see that no proper subgroup of the center of a group can have a strict $n$-decomposition.

Second, note in the proof of Theorem \ref{th-0} that the $n$-split decomposition we obtain has at least one $B_i$ for each coset of $A$ in $G$.  This shows that $G$ will always have an $n$-split decomposition with $n \geqslant |G:A| - 1$ for every maximal abelian subgroup of $G$.  This shows that $G$ will always have a strict $n$-split decomposition where $|G:A|$ is bounded by $n + 1$.  On the other hand, the $n$-split decompositions found in this theorem likely do not have the smallest $n$ that will work.  We will see that it is possible for $|G:A|$ to be larger than $n+1$.  See Lemma 4.4 (3) and (4) for examples.


\section{Preliminary Results}

If $G$ is a group and $g \in G$, then we write $o (g)$ for the order of $g$.  We start with the following general result:

\begin{lm}\label{lm-e0}
Suppose a group $G$ has an $n$-split decomposition with abelian subgroup $A$.  Then, the following hold:
\begin{enumerate}
\item If $U$ is an abelian subgroup of $G$ not contained in $A$, then $$|U \cap A|(|U:U \cap A| - 1) \leqslant n.$$  In particular, $|U \cap A| \leqslant n$, $|U:U \cap A| \leqslant n + 1$, and $|U| \leqslant 2n$.
\item If $b \in G \setminus A$ with $\langle b \rangle \cap A = 1$, then $(o(b)-1)|C_A(b)| \leqslant n$.
\item If $b \in G \setminus A$, then $|C_A (b)| \leqslant n$ and $o (b) \leqslant 2n$.
\item If $A$ is normal in $G$ and $b \in G \setminus A$, then $o (Ab) \leqslant n + 1$.
\item If the $n$-split decomposition is strict, then $Z(G) \leqslant A$ and $|Z(G)|\leqslant n$.
\end{enumerate}
\end{lm}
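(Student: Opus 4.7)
The plan is to treat part (1) as the core structural statement and deduce parts (2)--(5) from it by applying it to carefully chosen abelian subgroups containing $b$. For part (1), the crucial observation is that an abelian subgroup $U$ can meet each $B_i$ in at most one element, because distinct elements of $U$ commute while no two distinct elements of a fixed $B_i$ do. Since $U\not\subseteq A$, the set $U\setminus A$ lies entirely in $B_1\cup\cdots\cup B_n$, forcing $|U\setminus A|\leqslant n$. A routine coset count rewrites $|U\setminus A|=|U\cap A|\bigl(|U:U\cap A|-1\bigr)$, which is the stated inequality. The three consequences then follow: $|U\cap A|\leqslant n$ because $|U:U\cap A|\geqslant 2$; $|U:U\cap A|-1\leqslant n$ because $|U\cap A|\geqslant 1$; and $|U|=|U\cap A|+|U\setminus A|\leqslant 2n$.

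For parts (2) and (3) I would apply part (1) to $U=\langle b\rangle C_A(b)$. This is abelian because $b$ centralises $C_A(b)\leqslant A$, and since $b$ commutes with its own powers, $\langle b\rangle\cap A\leqslant C_A(b)$; consequently $U\cap A=C_A(b)$. Under the hypothesis $\langle b\rangle\cap A=1$ of part (2), the product is direct and $|U:U\cap A|=o(b)$, so part (1) yields $|C_A(b)|(o(b)-1)\leqslant n$. Without that hypothesis, part (1) still gives $|C_A(b)|=|U\cap A|\leqslant n$ and $o(b)\leqslant|U|\leqslant 2n$, which is part (3). For part (4), apply part (1) to the abelian subgroup $U=\langle b\rangle$; since $A$ is normal, $|U:U\cap A|$ equals the order of $Ab$ in $G/A$, so $o(Ab)-1\leqslant n$.

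For part (5), strict-ness is what forces $Z(G)\leqslant A$: a central element $z$ outside $A$ would lie in some $B_i$, and since $|B_i|\geqslant 2$ there would be another element of $B_i$ commuting with $z$, contradicting the noncommuting condition. Because $G$ is nonabelian and $A$ is abelian, there exists $b\in G\setminus A$, and then $Z(G)\leqslant C_A(b)$ gives $|Z(G)|\leqslant n$ via part (3). The only care point throughout is verifying $U\cap A=C_A(b)$ in parts (2) and (3); beyond that the bounds follow mechanically from part (1), which I expect to be the sole place where any genuine combinatorial argument is needed.
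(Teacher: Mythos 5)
Your proposal is correct and follows essentially the same route as the paper: the counting argument that an abelian $U$ meets each $B_i$ at most once gives part (1), the subgroup $C_A(b)\langle b\rangle$ handles parts (2) and (3), the identity $o(Ab)=|\langle b\rangle:\langle b\rangle\cap A|$ gives part (4), and the strictness argument for part (5) is the same. You even supply a detail the paper glosses over, namely the verification that $U\cap A=C_A(b)$ via $\langle b\rangle\cap A\leqslant C_A(b)$.
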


\begin{proof}
Let $G = A \uplus B_1 \uplus B_2\uplus \cdots \uplus B_n$ be the $n$-split decomposition of $G$. Suppose that $U$ is an abelian subgroup of $G$ not contained in $A$.  Observe that $U \setminus U \cap A$ is a commuting subset of $G \setminus A$.  Hence, each element of $U \setminus U \cap A$ must lie in a distinct $B_i$.  This implies that $|U \setminus U \cap A| \leqslant n$.  Notice that $|U \setminus U \cap A| = |U| - |U \cap A| = |U \cap A|(|U:U \cap A| - 1)$. This implies that $|U \cap A|(|U:U \cap A| - 1) \leqslant n$.  Since $U$ is not contained in $A$, we see that $(|U:U \cap A| - 1) \geqslant 1$, and so, $|U \cap A| \leqslant n$.  Similarly, as $|U \cap A| \geqslant 1$, we have that $(|U:U \cap A| - 1) \leqslant n$ and so, $|U:U \cap A| \leqslant n + 1$.  We have $|U| = |U \setminus U \cap A| + |U \cap A| \leqslant n + n = 2n$.

Suppose $b \in G\setminus A$.  Observe that $C = C_A (b) \langle b \rangle$ is an abelian subgroup not contained in $A$ and $C \cap A = C_A (b)$.  The results of (2) and (3) now follow from (1).  Note that if $A$ is normal in $G$, then $o (Ab) = |A \langle b \rangle/A| = |\langle b \rangle:\langle b \rangle \cap A|$, so (4) also follows from (1).

If we assume that the $n$-split decomposition is strict, then using the fact that $|B_i|>1$, it follows that $B_i$ does not contain any central element of $G$, and so $Z(G) \subseteq A$.  It follows that $Z(G) \leqslant C_A (b)$ for each $b \in G\setminus A$.  We now have $|Z(G)| \leqslant |C_A (b)| \leqslant n$.
\end{proof}

\begin{lm}\label{lm-e0-d}
Suppose a group $G$ has an $n$-split decomposition with abelian subgroup $A$, $p$ is a prime divisor of $|G|$, and $P$ is a $p$-subgroup of $G$.  If either (1) $p \geqslant n + 2$ or (2) $p^2 > 2n$ and $|P| \geqslant p^2$, then $P \leqslant A$.
\end{lm}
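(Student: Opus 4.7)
The plan is to show in each case that every element $x \in P$ lies in $A$, so $P \leqslant A$. Both cases reduce to applying Lemma \ref{lm-e0}(1) to a well-chosen abelian subgroup of $P$ and comparing its index against the $p$-power restriction forced by $P$ being a $p$-group.

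For case (1), I would pick an arbitrary $x \in P$ and apply Lemma \ref{lm-e0}(1) to the cyclic subgroup $U = \langle x \rangle$. If $U \not\leqslant A$, the lemma gives $|U : U \cap A| \leqslant n+1$. But $|U|$ and $|U \cap A|$ are both powers of $p$, so $|U : U \cap A|$ is a power of $p$, and if nontrivial it is at least $p \geqslant n+2$, which contradicts $|U : U \cap A| \leqslant n+1$. Hence $U \leqslant A$, i.e.\ $x \in A$.

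For case (2), the extra ingredient is that every maximal abelian subgroup $U$ of the $p$-group $P$ (with $|P|\geqslant p^2$) satisfies $|U|\geqslant p^2$. Indeed, $U = C_P(U)$ by maximality; if $U = P$ we are done, and otherwise $U < N_P(U)$ in the $p$-group $P$, so $N_P(U)/U$ is a nontrivial $p$-group. It embeds into $\mathrm{Aut}(U)$ because $U = C_P(U)$; if $|U| = p$ this would force a nontrivial $p$-element inside $\mathrm{Aut}(\mathbb{Z}/p)$ of order $p-1$, which is impossible. So $|U| \geqslant p^2 > 2n$. Now given any $x \in P$, enlarge $\langle x \rangle$ to a maximal abelian subgroup $U$ of $P$. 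If $U \not\leqslant A$, Lemma \ref{lm-e0}(1) yields $|U| \leqslant 2n$, contradicting $|U| \geqslant p^2 > 2n$. Hence $U \leqslant A$ and $x \in A$.

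I do not expect any serious obstacle here: case (1) is immediate from Lemma \ref{lm-e0}(1) together with the $p$-power constraint, and case (2) reduces to the standard fact that maximal abelian subgroups of a $p$-group of order $\geqslant p^2$ themselves have order $\geqslant p^2$. The only non-bookkeeping step is the verification of this $p$-group fact via the $N_P(U)/U \hookrightarrow \mathrm{Aut}(U)$ argument, but this is classical and short.
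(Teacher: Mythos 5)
Your proof is correct and follows essentially the same route as the paper: both cases come down to exhibiting an abelian subgroup not contained in $A$ whose order or index violates the bounds of Lemma \ref{lm-e0}(1). The only cosmetic differences are that in case (1) you apply Lemma \ref{lm-e0}(1) directly to $\langle x\rangle$ for an arbitrary $x\in P$ (which is in fact slightly tidier than the paper's reduction, via Lemma \ref{lm-e0}(2), to elements of order $p$), and in case (2) you obtain the abelian subgroup of order at least $p^2$ as a maximal abelian subgroup of $P$ via the $N_P(U)/C_P(U)$ embedding, whereas the paper adjoins to an order-$p$ element $b\in P\setminus A$ a commuting element found using $Z(P)$.
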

\begin{proof}
Let $b \in G$ be an element of order $p$.  If $b$ is not contained in $A$, then $\langle b \rangle \cap A = 1$.  By Lemma \ref{lm-e0} (2), we have $o(b) \leqslant n + 1$.  Thus, if $p \geqslant n + 2$, then we must have $b \in A$, and if $P$ is a $p$-group, then all of elements lie in $A$, so $P \leqslant A$.  We now assume that $p \leqslant n + 1$ and $p^2 > 2n$ and $|P| \geqslant p^2$.   Suppose that $P$ is not contained in $A$.  Thus, there is an element $b \in P \setminus A$.  By Lemma \ref{lm-e0} (2), we have that $o(b) \leqslant n + 1 \leqslant 2n < p^2$.  This implies that $b$ has order $p$.  If $b \in Z(P)$, then this implies $\langle b \rangle < C_P (b)$.  On the other hand, we know $1 \ne Z(P) \leqslant C_P (b)$, so if $b \not\in Z(P)$, then $\langle b \rangle < C_P (b)$.  Let $c$ be an element of $C_P (b)$ that does not lie in $\langle b \rangle$, and let $U = \langle b, c \rangle$.  Note that $U$ is an abelian subgroup of $G$ that is not contained in $A$ and has order at least $p^2$.  This is a contradiction to Lemma \ref{lm-e0} (1), and hence, $P \leqslant A$.
\end{proof}

We show that the bound obtained in Lemma \ref{lm-e0} (1) cannot be improved.

\begin{lm}
If $G$ is an extra-special group of order $p^3$ for some prime $p$, then $G$ has a (strict) $p(p-1)$-split decomposition and every $n$-split decomposition of $G$ satisfies $n \geqslant p (p-1)$.
\end{lm}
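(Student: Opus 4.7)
The plan is to exploit the standard structure of an extra-special $p$-group $G$ of order $p^3$: namely, $|Z(G)| = p$, the quotient $G/Z(G)$ is elementary abelian of order $p^2$, every abelian subgroup of $G$ has order at most $p^2$, and the maximal abelian subgroups of $G$ are precisely the $p+1$ subgroups of order $p^2$. Each of them contains $Z(G)$, and any two distinct ones meet exactly in $Z(G)$ (since their intersection is a proper subgroup of either factor that still contains $Z(G)$).

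For the existence of a strict $p(p-1)$-split decomposition, I would take $A$ to be any maximal abelian subgroup $M$, so that $|A| = p^2$ and $|G:A| = p$. For each $g \in G \setminus A$, the centralizer $C_G(g)$ is itself a maximal abelian subgroup of $G$ containing $g$, and so $C_G(g) \neq A$; hence $C_A(g) = A \cap C_G(g) = Z(G)$ has order exactly $p$. Lemma \ref{lm-eeee0} then splits each coset $Ag$ into $|C_A(g)| = p$ noncommuting subsets of size $|A:C_A(g)| = p$. Taking the union over the $p-1$ non-trivial cosets of $A$, as in the proof of Theorem \ref{th-0}, yields a strict $p(p-1)$-split decomposition of $G$ with respect to $A$.

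For the lower bound, consider an arbitrary $n$-split decomposition $G = A' \uplus B_1 \uplus \cdots \uplus B_n$. The strategy is to exhibit a single pairwise-commuting subset of $G \setminus A'$ of size $p(p-1)$; since each $B_i$ is noncommuting, this forces $n \geqslant p(p-1)$. Because $A'$ is abelian and $G$ is nonabelian, $|A'| \in \{1, p, p^2\}$. If $|A'| \leqslant p$, then for any maximal abelian subgroup $M'$ of $G$ we have $|M' \cap A'| \leqslant |A'| \leqslant p$. If $|A'| = p^2$, then $A'$ is itself maximal abelian, and choosing any other maximal abelian subgroup $M' \neq A'$ gives $M' \cap A' = Z(G)$, so again $|M' \cap A'| = p$. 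In either case,
\[
|M' \setminus A'| \;=\; p^2 - |M' \cap A'| \;\geqslant\; p^2 - p \;=\; p(p-1),
\]
and the elements of $M' \setminus A' \subseteq G \setminus A'$ pairwise commute (they all lie in the abelian group $M'$), so they must lie in $p(p-1)$ distinct $B_i$'s.

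I do not foresee any serious obstacle: once the structural description of maximal abelian subgroups in an extra-special group of order $p^3$ is in hand, both directions reduce to short counting arguments, with the only mildly delicate step being the case split on $|A'|$ in the lower-bound argument. Note also that this lower bound applies to every $n$-split decomposition, strict or not, which matches the statement of the lemma.
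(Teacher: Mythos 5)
Your proposal is correct and follows essentially the same route as the paper: the existence half uses Lemma \ref{lm-eeee0} to split each nontrivial coset of an order-$p^2$ abelian subgroup into $p$ noncommuting sets of size $p$ (using $C_A(g)=Z(G)$), and the lower bound comes from a second abelian subgroup of order $p^2$ meeting $A$ in at most $p$ elements. The only cosmetic differences are that you re-derive the content of Lemma \ref{lm-e0}~(1) inline via the commuting-set argument and organize the case analysis by $|A'|$ rather than by $|U\cap A|$.
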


\begin{proof}
Observe that the centralizer $C_G(x)$ of every noncentral element $x$ of $G$ is a maximal subgroup of $G$ of order $p^2$ and is abelian.  In fact, $G$ has $p+1$ abelian subgroups of order $p^2$. Suppose that $G$ has an $n$-split decomposition with abelian subgroup $A$, and let $U$ be an abelian subgroup of order $p^2$ that does not equal $A$.  It follows that either $|U \cap A| = |U:U \cap A| = p$ or $|U \cap A| = 1$ and $|U:U \cap A| = p^2$.  We now apply Lemma \ref{lm-e0} (1).  In the first case, we obtain $n \geqslant |U \cap A|(|U:U \cap A| - 1) = p(p-1)$, and in the second case, we have $n \geqslant |U:U \cap A| - 1 = p^2 - 1$.  In either case, we have $n \geqslant p^2 - p$.

We now prove that $G$ has an $(p^2-p)$-split decomposition.  Let $A$ be an abelian subgroup of order $p^2$, and fix an element $g \in G \setminus A$.  We know that $g, g^2, \dots, g^p$ forms a transversal for $A$ in $G$; moreover,  $g^i \not\in A$ for $1 \leqslant i \leqslant p-1$.  It follows that $C_A (g^i) = Z(G)$ has order $p$.  We now apply Lemma \ref{lm-eeee0} to obtain noncommuting sets $B_{i,1}, \dots, B_{i,p}$ so that $Ag^i = B_{i,1} \uplus \cdots \uplus B_{i,p}$ and note that $|B_{i,j}| = p$.  We now obtain $G = A \uplus B_{1,1} \uplus \cdots \uplus B_{p-1,p}$ is a strict $(p^2-p)$-split decomposition of $G$.
\end{proof}

Here we present some examples of nonabelian groups $G$ having a $n$-split decomposition, for some natural number $n$.  The first example concerns a Frobenius group $G$  with an abelian kernel $A$.

\begin{lm} \label{Frobenius}
If $G$ is a  Frobenius group with abelian Frobenius kernel $A$, then the cosets of $A$ form a strict $(|G:A|-1)$-split decomposition of $G$ with respect to $A$.  If in addition, $G$ has an abelian Frobenius complement, then any $n$-split decomposition of $G$ with respect to $A$ has $n \geqslant |G:A| - 1$.
\end{lm}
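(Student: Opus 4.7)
My plan is to handle the two claims separately, both hinging on the fixed-point-free action underlying the Frobenius structure. For the first claim, the key will be to show that $C_A(x) = 1$ for every $x \in G \setminus A$. Writing $G = A \rtimes H$ for some Frobenius complement $H$, any such $x$ can be written as $ah$ with $a \in A$ and $h \in H \setminus \{1\}$. A direct calculation using the abelianness of $A$ reduces the condition $c \in A$ commutes with $x = ah$ to $hch^{-1} = c$; the fixed-point-free action of $H$ on $A$ then forces $c = 1$. With this in hand, Lemma \ref{lm-eee0} immediately implies that no two distinct elements of any non-identity coset $Ax$ commute, so taking a transversal $1 = x_1, x_2, \ldots, x_m$ for $A$ in $G$, the decomposition $G = A \uplus Ax_2 \uplus \cdots \uplus Ax_m$ is a strict $(|G:A|-1)$-split decomposition. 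Strictness uses $|A| > 1$, which holds for any Frobenius kernel.

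For the second claim, I would exhibit a large commuting subset inside $G \setminus A$: if the Frobenius complement $H$ is abelian, then $H \setminus \{1\}$ is itself such a set of size $|H| - 1 = |G:A| - 1$, since $H \cap A = 1$. In any $n$-split decomposition $G = A \uplus B_1 \uplus \cdots \uplus B_n$, no two commuting elements of $G \setminus A$ can share a $B_i$, so the $|G:A| - 1$ pairwise commuting elements of $H \setminus \{1\}$ must occupy pairwise distinct $B_i$'s. This forces $n \geqslant |G:A| - 1$, exactly as desired.

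The only place where any genuine work occurs is the centralizer computation $C_A(x) = 1$, and even that is a short reduction once the Frobenius hypothesis is unpacked. Everything else is either an immediate application of Lemma \ref{lm-eee0} for the upper bound or a packing count of the kind already used in Lemma \ref{lm-e0} (1) for the lower bound, so I do not anticipate any further obstacles.
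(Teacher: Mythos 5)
Your proposal is correct and follows essentially the same route as the paper: the first claim rests on the fixed-point-freeness of the Frobenius action (you compute $C_A(x)=1$ and invoke Lemma \ref{lm-eee0}, while the paper notes $C_G(b)$ lies in a complement, which is the same fact), and the second claim is the pigeonhole count on the commuting set $H\setminus\{1\}$, which is exactly the content of the paper's appeal to Lemma \ref{lm-e0}~(1) with $U=H$. No gaps.
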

\begin{proof}
It is obvious that $G$ is the disjoint union of the cosets of $A$.  If $B$ is a coset of $A$ that does not equal $A$ and $b \in B$, then we know that $C_G (b)$ is contained in a Frobenius complement of $G$, and so, $C_G (b) \cap B = \{ b \}$.  It follows that $b$ does not commute with any other element of $B$.  This proves that the cosets of $A$ form a strict $(|G:A|-1)$-split decomposition of $G$ with respect to $A$.  Suppose $G$ has an $n$-split decomposition with respect to $A$ and that $U$ is an abelian Frobenius complement of $G$.  We know that $|U \cap A| = 1$, so $|U| = |U:U \cap A|$, and by Lemma \ref{lm-e0} (1), we have $|U| \leqslant n + 1$.  Since $|U| = |G:A|$, we conclude that $|G:A| - 1 \leqslant n$.
\end{proof}

In particular, a Frobenius group of order $2m$, where $m$ is odd and the kernel is abelian of order $m$, has a split decomposition over the Frobenius kernel.

The following result is proved in \cite{ma}, and states that these are the only possible split decompositions.  We provide a direct group-theoretic proof for the sake of completeness, which was inspired by Isaacs.

\begin{corollary}\label{th-1}
The following conditions on a nonabelian group $G$ are equivalent:
\begin{itemize}
\item[$(1)$] $G$ has a split decomposition with respect to an abelian subgroup $A$.
\item[$(2)$] $G$ is a Frobenius group of order $2n$, where $n$ is odd, and the Frobenius kernel is abelian of order $n$ and is $A$.
\end{itemize}
\end{corollary}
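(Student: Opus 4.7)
The plan is to prove the two directions separately, with the easy direction $(2)\Rightarrow(1)$ handled first. If $G$ is a Frobenius group of order $2n$ with abelian Frobenius kernel $A$ of order $n$, then $|G:A|=2$, and by Lemma \ref{Frobenius} the cosets of $A$ provide a strict $(|G:A|-1)=1$-split decomposition of $G$. Since $G$ is nonabelian we have $n\geqslant 3$, so $|B_1|=n>1$, and this is indeed a (strict) split decomposition.

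The real content is $(1)\Rightarrow(2)$, which I would deduce by pushing Lemma \ref{lm-e0}(1) with $n=1$ to its limit. If $U$ is any abelian subgroup of $G$ not contained in $A$, then $|U\cap A|(|U:U\cap A|-1)\leqslant 1$, forcing $|U\cap A|=1$ and $|U|=2$. First I would apply this to $U=C_A(b)\langle b\rangle$ for each $b\in G\setminus A$ to conclude that every element of $G\setminus A$ is an involution with $C_A(b)=1$. Next I would rule out $A=1$: that case would force every nonidentity element to be an involution and pairwise noncommuting, which is impossible since a group of exponent $2$ is abelian. Applying the same trick to $U=A\langle g\rangle$ for $g\in C_G(A)\setminus A$ (an abelian overgroup of $A$) yields $|A|=1$, a contradiction; hence $C_G(A)=A$.

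With those pieces in place, I would finish as follows. Fix an involution $t\in G\setminus A$. For any $a\in A$ the element $ta$ also lies in $G\setminus A$ and is therefore an involution, and expanding $(ta)^2=1$ gives $tat^{-1}=a^{-1}$, so $t$ inverts $A$. If $b\in G\setminus A$ is a second involution, then $b$ also inverts $A$, so $bt$ centralizes $A$, i.e. $bt\in C_G(A)=A$, whence $b\in At$. Thus $G\setminus A\subseteq At$, which combined with $G\neq A$ forces $|G:A|=2$. The order of $A$ must be odd, because any involution $s\in A$ would satisfy $tst^{-1}=s^{-1}=s$ and hence lie in $C_A(t)=1$, a contradiction. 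Finally, since $A$ is normal in $G$ of index $2$ and $C_A(g)=1$ for every $g\in G\setminus A$, the subgroup $A$ is a Frobenius kernel of $G$, and we are done.

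The main obstacle is the geometric step showing that all involutions of $G\setminus A$ lie in a single coset of $A$, which is what reduces the a priori unbounded index $|G:A|$ to exactly $2$; everything before that step follows mechanically from Lemma \ref{lm-e0}(1), and everything after it is the standard recognition of a generalized dihedral Frobenius group.
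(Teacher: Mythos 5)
Your proof is correct, and while it starts from the same place as the paper, it finishes by a genuinely different (and more self-contained) route. Both arguments open identically: Lemma \ref{lm-e0} with $n=1$ forces every $b\in G\setminus A$ to be an involution with $C_A(b)=1$. The paper then pins down the structure via Sylow theory: Lemma \ref{lm-e0-d} gives $|P|=2$ for a Sylow $2$-subgroup $P$, hence $P\cap A=1$ and $|A|$ odd, and the conclusion $|G:A|=2$ is then stated with the justification left implicit. You avoid Sylow theory entirely and use the classical generalized-dihedral argument: after ruling out $A=1$ and showing $C_G(A)=A$ (both via Lemma \ref{lm-e0}(1)), you observe that every involution outside $A$ inverts $A$, so any two of them differ by an element of $C_G(A)=A$ and $G\setminus A$ is a single coset $At$; oddness of $|A|$ then falls out because an involution of $A$ would be both fixed and inverted by $t$. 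What your route buys is precisely the detail the paper's one-line ``$|G:A|=2$'' glosses over, at the cost of the extra (but easy) lemmas $A\neq 1$ and $C_G(A)=A$; what the paper's route buys is brevity, since Lemma \ref{lm-e0-d} does the numerical work in one stroke. The $(2)\Rightarrow(1)$ direction is handled the same way in both, via Lemma \ref{Frobenius}.
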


\begin{proof}
$(1)\Rightarrow (2).$
Suppose $G = A \uplus B$ is a split decomposition of $G$.
Let $b$ in $B$.  Then $(o (b) - 1) |C_A (b)| \leqslant 1$ by Lemma \ref{lm-e0} (2).  This implies that $o (b) = 2$ and $C_A(b) = 1$.  Let $P$ be a Sylow $2$-subgroup of $G$.  If $|P| \geqslant 4$, then by Lemma \ref{lm-e0-d}, we would have $P \leqslant A$ which is a contradiction.  Thus, $|P| = 2$, and so $P \cap A = 1$.  Now, we have that $|A|$ is odd, $|G:A| = 2$, and every element outside $A$ has order $2$.  This implies that $G$ is a Frobenius group with abelian Frobenius kernel $A$.

$(2)\Rightarrow (1).$  This is Lemma \ref{Frobenius} when $|G:A| = 2$.
\end{proof}


\section{Groups having a strict  $2$-split decomposition}
We now want to understand groups with an $n$-split decomposition where $n > 1$.  We begin with the following observation that shows that having an $n$-split decomposition with respect to $A$ in most cases will yield an $(n+1)$-split decomposition for the same group $A$.  The following lemma follows easily from the definition.

\begin{lm}\label{lm-ee0}
Suppose the group $G$ has a strict $n$-split decomposition $G=A \uplus B_1 \uplus B_2\uplus \cdots \uplus B_n$, and at least one of the following occurs:
\begin{itemize}
\item[{\rm (a)}] there exists an integer $i$ with $1 \leqslant i \leqslant n$ such that $|B_i|\geqslant 4$.
\item[{\rm (b)}] there exist integers $1 \leqslant i <j \leqslant n$ with $|B_i|\geqslant 3$ and $|B_j|\geqslant 3$, and two elements $x\in B_i$ and $y\in B_j$ such that $xy\ne yx$.
\end{itemize}
Then $G$ has a strict $(n+1)$-split decomposition.
\end{lm}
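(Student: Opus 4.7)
The plan is to construct, in each case, a strict $(n+1)$-split decomposition by pulling exactly two noncommuting elements out of the existing decomposition and placing them in a new block $B_{n+1}$ of size $2$. The key point is that any subset of a noncommuting set is itself noncommuting, so I only need to verify two things: that the pair forming $B_{n+1}$ is noncommuting, and that whichever old blocks I shrink still contain at least $2$ elements (strictness).

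In case (a), I would pick any two distinct elements $x, y \in B_i$. Since $B_i$ is a noncommuting set and $x \neq y$, we have $xy \neq yx$, so $B_{n+1} := \{x,y\}$ is a valid noncommuting block. The shrunken block $B_i \setminus \{x,y\}$ has size at least $|B_i| - 2 \geqslant 2$ and remains noncommuting. Every other $B_k$ is left intact, and $A$ is unchanged, so
$$G = A \uplus B_1 \uplus \cdots \uplus (B_i \setminus \{x,y\}) \uplus \cdots \uplus B_n \uplus \{x,y\}$$
is a strict $(n+1)$-split decomposition.

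In case (b), I would use the hypothesis directly: set $B_{n+1} := \{x,y\}$ with the noncommuting pair $x \in B_i$, $y \in B_j$ supplied by the assumption. Then $B_i \setminus \{x\}$ and $B_j \setminus \{y\}$ each still have at least $2$ elements, and both remain noncommuting as subsets of the original noncommuting sets. Keeping $A$ and all other $B_k$ the same produces the required strict $(n+1)$-split decomposition.

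Since the argument amounts to a direct manipulation of the decomposition using only the definitions of \emph{noncommuting} and \emph{strict}, I do not foresee any real obstacle. The only subtlety is confirming that the newly created size-$2$ block $\{x,y\}$ is noncommuting: in case (a) this follows because two distinct elements of a noncommuting set never commute, and in case (b) it is part of the hypothesis.
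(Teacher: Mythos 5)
Your proof is correct and is exactly the argument the paper has in mind: the paper states that this lemma ``follows easily from the definition'' and gives no further details, and your construction --- extracting a noncommuting pair into a new block $B_{n+1}$ while checking that the shrunken blocks retain at least two elements --- is the intended elementary verification. Both cases are handled properly, including the key observation that any two distinct elements of a noncommuting set fail to commute.
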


Hence, a group usually will have strict $n$-split decompositions for different values of $n$.  Notice that, a group $G$ can have more than one (strict) $n$-split decomposition for a fixed $n$.  In particular, we have the following $2$-split decompositions for $S_3$: $A = \{ 1, (12) \}$, $B_1 = \{ (13), (123) \}$, $B_2 = \{ (23), (132) \}$ and $A = \{ 1 \}$, $B_1 = \{ (12), (13), (123) \}$, $B_2 = \{ (23), (132) \}$.

We now work to classify the groups having a $2$-split decomposition.  Lemma \ref{Frobenius} implies that the Frobenius groups with an abelian kernel of index $3$ have a $2$-split decomposition for the Frobenius kernel.  Applying Lemma \ref{lm-ee0} and Corollary \ref{th-1}, we see that every Frobenius group of order $2n$ with an abelian Frobenius kernel of order $n$ except for $D_6 \cong S_3$, also has a $2$-split decomposition.  Note that $S_3$ cannot have a strict $2$-split decomposition over its Frobenius kernel since there are only three elements outside the Frobenius kernel.  On the other hand, the previous paragraph shows that $S_3$ does have strict $2$-split decompositions over other subgroups $A$.  The next result generalizes the fact that Frobenius groups whose Frobenius kernels have index $2$ and order at least $5$ have a strict $2$-decompositions.

\begin{lm}\label{exist}
Let $G$ be a nonabelian group of order greater than $6$ containing an abelian subgroup $A$ of index $2$.  Then $G$ has a (strict) $2$-split decomposition for $A$ if and only if $|Z(G)| \leqslant 2$.
\end{lm}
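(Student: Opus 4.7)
The plan is to leverage the preliminary results from Sections~2 and~3. First I would establish a structural observation that is the key to the argument: for every $g \in G \setminus A$, we have $C_A(g) = Z(G)$. Indeed, since $G$ is nonabelian and $A$ is abelian of index $2$, the subgroup $A$ must be maximal abelian (otherwise an element outside $A$ would commute with all of $A$, and together with $A$ would generate $G$). In particular $Z(G) \leqslant A$, and for any $g \notin A$ the equality $G = \langle A, g \rangle$ shows that an element $a \in A$ lies in $C_A(g)$ iff it commutes with all of $G$, i.e.\ iff $a \in Z(G)$.

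The forward implication is then immediate from Lemma~\ref{lm-e0}(5): a strict $2$-split decomposition forces $|Z(G)| \leqslant 2$.

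For the converse, assume $|Z(G)| \leqslant 2$ and fix any $g \in G \setminus A$, so that by the observation above $|C_A(g)| = |Z(G)| \in \{1,2\}$. Since $|G| > 6$ and $|G|$ is even (as $A$ has index $2$), we have $|A| = |G|/2 \geqslant 4$. If $|C_A(g)| = 2$, I would apply Lemma~\ref{lm-eeee0} directly with $n = 2$: it partitions $Ag$ into two noncommuting subsets $B_1, B_2$, each of size $|A:C_A(g)| = |A|/2 \geqslant 2$, and then $G = A \uplus B_1 \uplus B_2$ is the desired strict $2$-split decomposition. If instead $|C_A(g)| = 1$, Lemma~\ref{lm-eee0} says that no two distinct elements of $Ag$ commute, so $Ag$ is already a noncommuting set; splitting it arbitrarily into two pieces of sizes $2$ and $|A|-2$ (both at least $2$ since $|A| \geqslant 4$) gives the required strict $2$-split decomposition, since subsets of a noncommuting set are noncommuting.

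I do not expect any real obstacle here: the argument is essentially a direct application of Lemmas~\ref{lm-eee0}, \ref{lm-eeee0}, and~\ref{lm-e0}(5) once the identification $C_A(g) = Z(G)$ is in hand. The only subtlety worth flagging is the role of the hypothesis $|G| > 6$: it is used exclusively to ensure $|A| \geqslant 4$, so that each of the two pieces of the partition of $Ag$ has at least two elements. This is precisely why the statement must exclude $S_3$, where $|A| = 3$ and any split of $Ag$ forces one part to be a singleton.
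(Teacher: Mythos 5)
Your proof is correct and follows essentially the same route as the paper: the forward direction via Lemma \ref{lm-e0}(5), and the converse by identifying $C_A(g)=Z(G)$ and partitioning the nontrivial coset $Ag$ via Lemmas \ref{lm-eee0} and \ref{lm-eeee0}. The only cosmetic difference is in the case $Z(G)=1$, where the paper invokes the Frobenius-group lemma together with Lemma \ref{lm-ee0}(a) while you simply cut the noncommuting coset $Ag$ into two pieces of size at least $2$ directly; both are valid and rest on the same underlying decomposition.
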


\begin{proof}
$(\Rightarrow)$
This follows immediately from Lemma \ref{lm-e0} (5).

$(\Leftarrow)$
Suppose first that $Z(G)=1$. Then $G$ is a Frobenius group with kernel $A$ of odd order $\geqslant 5$.  We know $G$ has a split decomposition by Lemma \ref{Frobenius} and also a $2$-split decomposition by Lemma \ref{lm-ee0}(a).  So suppose that $|Z(G)| = 2$.  Let $t$ be an arbitrary element in $G \setminus A$.  Then $C_A(t) = Z(G)$.  By Lemma \ref{lm-eeee0}, there exist noncommuting sets $B_1$ and $B_2$ so that $At = B_1 \uplus B_2$.  We conclude that $G = A \uplus B_1 \uplus B_2$ is a $2$-split decomposition of $G$.
\end{proof}

In light of Lemma \ref{exist}, dihedral, semidihedral, and (generalized) quaternion groups of order at least $8$ have (strict) $2$-split decompositions with respect to their abelian subgroups of index $2$.  We note that the dihedral group of order $8$ and the quaternion group of order $8$ also have strict $2$-split decompositions over their centers.

\begin{lm} \label{index2}
Let $G$ be a nonabelian group.  Then $G$ has an abelian subgroup of index $2$ and $|Z(G)| \leqslant 2$ if and only if $G$ is the semi-direct product of a nontrivial $2$-group $P$ acting on a group $Q$ with odd order where $P$ has an abelian subgroup $B$ of index $2$ and either (1) $|Z(P)| = 2$ or (2) $|P| \leqslant 4$ and $|Q| > 1$, so that $B$ centralizes $Q$ and any element of $P$ outside of $B$ inverts every element of $Q$.  In this situation, if $|P| \geqslant 8$, then $P$ is dihedral, semi-dihedral, quaternion, or generalized quaternion.
\end{lm}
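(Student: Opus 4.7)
The plan is to establish the two implications in turn and then derive the concluding classification of $P$. For the forward direction, write $A = Q \times R$ where $Q$ is the Hall $2'$-subgroup of $A$ and $R$ its Sylow $2$-subgroup; since $A$ is normal of index $2$ in $G$, so is the characteristic subgroup $Q$, and Schur--Zassenhaus produces a complement $P$ of $Q$ in $G$, so $G = P \ltimes Q$ with $|P| = 2|R|$. Set $B := P \cap A$: this is an abelian subgroup of index $2$ in $P$ which centralizes $Q \leqslant A$. For $t \in P \setminus B$ we have $t^2 \in B \leqslant A$, so $t$ acts on $Q$ with order dividing $2$, and its fixed points lie in $Z(G) \cap Q$, which is trivial because $|Z(G)| \leqslant 2$ and $|Q|$ is odd. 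Assuming $Q \ne 1$, the standard fact that a fixed-point-free involutive automorphism of an abelian group is inversion then forces every element of $P \setminus B$ to invert $Q$.

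To finish the forward direction, I would compute $Z(G)$ using the unique factorization $g = pq$ in $G = PQ$: commuting with an arbitrary $p' \in P$ and using $Q$-normality pins $p \in Z(P)$ and $q \in C_Q(P) \leqslant C_Q(t) = 1$, while commuting with $Q$ forces $p \in C_P(Q) = B$ (when $Q \ne 1$, since $P \setminus B$ inverts $Q$); hence $Z(G) = Z(P) \cap B$. If $Z(P) \leqslant B$ then $|Z(P)| = 2$, giving case (1); otherwise some $z \in Z(P) \setminus B$ together with $B$ generates $P$, which is thereby abelian, so $Z(G) = B$ has order $\leqslant 2$ and case (2) holds. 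The case $Q = 1$ is immediate: $G = P$ with $|Z(P)| = |Z(G)| \leqslant 2$, forcing $|Z(P)| = 2$. The converse is a direct verification: $Q$ must be abelian (inversion is an endomorphism only on an abelian group), so $A := BQ$ is abelian of index $2$ in $G$; the same $Z(G) = Z(P) \cap B$ (or $Z(G) = Z(P)$ if $Q = 1$) calculation yields $|Z(G)| \leqslant 2$ under either hypothesis.

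For the final assertion about $P$ when $|P| \geqslant 8$, the strategy is to show that $B$ is cyclic; then $P$ is a $2$-group of order $\geqslant 8$ with a cyclic maximal subgroup, and the classical classification of such groups (cyclic, $C_{2^{n-1}} \times C_2$, modular maximal-cyclic, dihedral, generalized quaternion, semidihedral) together with the constraint $|Z(P)| = 2$ singles out the three families $D_{2^n}$, $Q_{2^n}$, $SD_{2^n}$. To prove $B$ cyclic, fix $t \in P \setminus B$ and let $\sigma$ be the induced involutive automorphism of $B$; then $C_B(\sigma) = Z(P)$ has order $2$, so $|\mathrm{im}(\sigma - 1)| = |B|/2$, and the identity $(\sigma - 1)^2 = -2(\sigma - 1)$ in $\mathrm{End}(B)$ gives $2 \cdot \mathrm{im}(\sigma - 1) \leqslant \ker(\sigma - 1)$. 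This is the main obstacle: from these relations and the $\sigma$-action on the characteristic subgroups $\Omega_1(B)$ and $2B$, one must rule out every non-cyclic abelian $2$-group $B$ of order $\geqslant 8$. The elementary-abelian subcase falls quickly to the $\mathbb{F}_2$-linear observation that $(\sigma - 1)^2 = 0$ forces the fixed space to have dimension at least half the rank, but the mixed case $B = C_{2^a} \times C_{2^b}$ with $a \geqslant 2$ requires separate case analysis and may be most cleanly dispatched by citing the known classification of $2$-groups possessing an abelian maximal subgroup.
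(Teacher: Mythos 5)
Your forward and converse directions are sound and follow essentially the same route as the paper: split off the normal Hall $2'$-subgroup $Q = O_{2'}(A)$, set $B = P\cap A$, observe that $C_Q(t)$ for $t \in P\setminus B$ is centralized by all of $\langle t, B, Q\rangle = G$ and hence trivial, and locate $Z(G)$ inside $Z(P)\cap B$ to force the dichotomy (1)/(2). The genuine problem is in the final assertion, the classification of $P$ when $|P|\geqslant 8$. First, a small but real slip: your plan is ``show $B$ is cyclic, then invoke the classification of $2$-groups with a cyclic maximal subgroup,'' but $B$ need not be cyclic when $|P|=8$ (take $P=D_8$ and $B$ the Klein four subgroup; your own invariant $|C_B(\sigma)|=|Z(P)|=2$ is satisfied there with $B$ noncyclic). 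The case $|P|=8$ must be dispatched separately, as the paper does: order $8$ together with $|Z(P)|=2$ forces $D_8$ or $Q_8$ directly.

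Second, and more seriously, for $|P|\geqslant 16$ you set up the right invariants ($\sigma^2=1$, $C_B(\sigma)=Z(P)$ of order $2$, $(\sigma-1)(\sigma+1)=0$) but you do not finish: you concede that the mixed case $B\cong C_{2^a}\times C_2$ ``requires separate case analysis and may be most cleanly dispatched by citing the known classification of $2$-groups possessing an abelian maximal subgroup.'' That case is the heart of the lemma, not a detail to be outsourced. (Your setup can in fact be completed: $\mathrm{im}(\sigma+1)\leqslant C_B(\sigma)$ forces $\sigma$ to invert every square in $B$, which together with $|\Omega_1(B)|\leqslant 4$ pins a noncyclic $B$ down to $C_{2^a}\times C_2$, and a short explicit computation then shows every admissible $\sigma$ fixes a subgroup of order $4$, a contradiction --- but none of this appears in your write-up.) The paper closes the gap by a different, self-contained construction: assuming $B$ noncyclic, it takes $U=\{b\in B \mid b^4=1\}$ of order at least $8$, finds an $x$-invariant subgroup $V\leqslant U$ of order $8$ containing $Z(P)$, forms $D=\langle x\rangle V$ of order $16$, shows $Z(D)=Z(P)$ and $|D:D'|=4$, and concludes via Taussky's theorem that $D$ is dihedral, semidihedral, or generalized quaternion, whose only abelian subgroup of index $2$ is cyclic --- contradicting the fact that $V$ has order $8$ and exponent dividing $4$. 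You should either carry out your endomorphism computation in full or adopt an argument of this kind; as written, the last third of the proof is not established.
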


\begin{proof}
Suppose that $G$ has an abelian subgroup $A$ of index $2$ and $|Z(G)| \leqslant 2$.  Let $P$ be a Sylow $2$-subgroup of $G$ and let $Q$ be the Hall $2$-complement  $O_{2'}(A)$ of $A$, and observe that $Q$ is a normal Hall $2$-complement of $G$.  Thus, $G$ is the semi-direct product of $P$ acting on $Q$.  Let $B = P \cap A$ and observe that $B$ is an abelian subgroup of index $2$ in $P$ and centralizes $Q$.  Obviously, $|P| \geqslant 2$.  If $|P| \leqslant 4$, then since $G$ is nonabelian, we must have $|Q|>1$.  Thus, we may assume that $|P| \geqslant 8$.  Notice that $|Z (P)| \geqslant 2$.  Observe that both $P$ and $Q$ will centralize $B \cap Z(P)$, so $B \cap Z(P)$ is central in $PQ = G$, and thus, $B \cap Z(P) \leqslant Z(G)$, and so, $|B \cap Z(P)| \leqslant 2$.  Notice that if $Z (P)$ is not contained in $B$, then $P = B Z(P)$ and both $B$ and $Z(P)$ centralize $B$; so $B$ is central in $P$.  This implies that $B = B \cap Z(P)$.  We deduce that $|B| \leqslant 2$ and $|P| \leqslant 4$ which contradicts $|P| \geqslant 8$.  Thus, $|Z (P)| = |B \cap Z(P)| \leqslant 2$, and conclude that $|Z(P)| = 2$.  Suppose $x$ is an element of $P$ that lies outside $B$.  We know that $Z (G) \leqslant C_G (x)$.  Observe that $x$, $B$, and $Q$ will all centralize $C_Q (x)$, and so, $C_Q (x) \leqslant Z(G) \cap Q$, and thus, $C_Q (x) = 1$.  This implies $x B$ acts Frobeniusly on $Q$, and this implies that $x$ inverts every element of $Q$.

Suppose now that $|P| \geqslant 8$.  Since $|Z(P)| = 2$, it follows that $P$ is a nonabelian group.  If $|P| = 8$, this implies that $P$ is either the dihedral group or the quaternion group.  Thus, we may assume that $|P| \geqslant 16$.  This implies that $|B| \geqslant 8$.  Let $U = \{ b \in B \mid b^4 = 1 \}$.  It is not difficult to see that $U$ is a characteristic subgroup of $B$.  By the Fundamental Theorem of abelian groups, we see that $|U| \geqslant 4$ and $|U| = 4$ if and only if $B$ is cyclic.  Suppose $B$ is not cyclic, so $|U| \geqslant 8$.  Fix an element $x \in P \setminus B$.  Observe that $\langle x \rangle \cap B$ and $C_B (x)$ will both be centralized by both $x$ and $B$ and so are central in $P$.  This implies that $|\langle x \rangle \cap B| \leqslant 2$ and so, $x$ has order at most $4$.  Also, $|C_B (x)| = 2$, so $C_B (x) = Z(P)$. We can find an $x$-invariant subgroup $V$ of $U$ with $Z(P) \leqslant V$ and $|V| = 8$.  Let $D = \langle x \rangle V$, and observe that $|D| = 16$.  Notice that $C_V (x) = C_B(x) \cap V = Z(P) \cap V = Z(P)$.  This implies that $Z(D) = Z(P)$.  We deduce that $D' > 1$, and so $Z (D) \leqslant D'$.  If $D' = Z(D)$, then $D$ would be extra-special which is a contradiction since $|D : Z(D)| = 8$.  Thus, we must have $Z (D) < D'$.  Since $D$ is not cyclic, we know that $|D:D'| \geqslant 4$, and we conclude that $|D:D'| = 4$.  This implies that $D$ is either dihedral, semidihedral, or generalized quaternion.  But in this case, the only abelian subgroup of index $2$ is cyclic since $|D| = 16$.  We deduce that $V$ is cyclic which is a contradiction.  Therefore, $B$ is cyclic, and hence, $P$ is dihedral, semidihedral, or generalized quaternion (see Satz I.14.9 of \cite{hup}).

Finally, suppose that $G$ is the semi-direct product of a nontrivial $2$-group $P$ acting on a group $Q$ with odd order where $P$ has an abelian subgroup $B$ of index $2$ and either (1) $|Z(P)| = 2$ or (2) $|P| \leqslant 4$ and $|Q| > 1$, so that $B$ centralizes $Q$ and any element of $P$ outside of $B$ inverts every element of $Q$.  Let $A = B \times Q$.  Observe that $|G:A| = |PQ:BQ| = |P:P \cap BQ| = |P:B| = 2$.  Since every element of $P$ outside $B$ inverts all every element of $Q$, we conclude that $Z(G) \cap Q = 1$, so $Z(G) \leqslant P$.  This implies that $Z(G) \leqslant Z(P)$.  If $|Z(P)| = 2$, then $|Z(G)| \leqslant 2$.  If $|P|\leqslant 4$, then there exist nontrivial elements of $Q$, so the elements of $P$ outside of $B$ cannot be central in $G$, so $Z(G) \leqslant B$, and since $|B| \leqslant 2$, we conclude that $|Z(G)| \leqslant 2$.
\end{proof}

We now classify the groups $G$ that have a $2$-split decomposition with respect to a normal abelian group $A$.

\begin{lm}\label{lm-e1}
Let $G$ be a group.  Then $G$ has a strict $2$-split decomposition for a normal abelian subgroup $A$ if and only if one of the following occurs:
\begin{enumerate}
         \item $|G/A| = 2$ and $|Z (G)| \leqslant 2$ and $|A| \geqslant 4$.
         \item $|G/A| = 3$ and $G$ is a Frobenius group with Frobenius kernel $A$.
         \item $|G/A| = 4$ and $G$ is either the quaternion group of order $8$ or the dihedral of order $8$ (note that $A$ must be the center of $G$).
         \item $|G/A| = 6$ and $G$ is $S_3$ with $A = 1$ or $S_4$ and $A$ is the normal Klein $4$-subgroup.
\end{enumerate}
\end{lm}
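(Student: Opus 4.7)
The plan is to prove the two directions separately. The reverse direction is straightforward case-checking: case~(1) is exactly Lemma~\ref{exist}; case~(2) is Lemma~\ref{Frobenius} applied with $|G:A|=3$; case~(3) is verified by the explicit decompositions $Q_8 = \{\pm 1\} \uplus \{i,j,k\} \uplus \{-i,-j,-k\}$ and $D_8 = \{1,r^2\} \uplus \{r,s,rs\} \uplus \{r^3,r^2s,r^3s\}$; and case~(4) is checked by the decomposition $S_3 = \{1\} \uplus \{(12),(13),(123)\} \uplus \{(23),(132)\}$ together with an analogous construction for $S_4$ over the normal Klein four-subgroup.

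For the forward direction, suppose $G = A \uplus B_1 \uplus B_2$ is a strict $2$-split decomposition with $A \trianglelefteq G$ abelian. I would begin by harvesting the structural bounds of Lemma~\ref{lm-e0} with $n=2$: part~(5) gives $Z(G) \leq A$ and $|Z(G)| \leq 2$; part~(3) gives $|C_A(b)| \leq 2$ and $o(b) \leq 4$ for every $b \in G \setminus A$; part~(4) gives $o(Ab) \leq 3$; and part~(1) caps any abelian subgroup $U$ of $G$ not contained in $A$ via $|U \cap A|(|U:U\cap A|-1) \leq 2$. An immediate consequence is that $G/A$ has no element of order~$6$, since such an element would lift to one of order at least $6$ outside~$A$; hence every nontrivial element of $G/A$ has order $2$ or $3$.

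The next step is to bound $|G/A|$. Lemma~\ref{lm-e0}(2) applied to any order-$3$ element $b \in G \setminus A$ yields $C_A(b) = 1$, so if the Sylow $3$-subgroup of $G/A$ had order $\geq 9$, its preimage would be a Frobenius group with a non-cyclic complement $C_3 \times C_3$, violating the classification of Frobenius complements. Thus the $3$-part of $|G/A|$ is at most $3$. For the $2$-part, the Sylow $2$-subgroup of $G/A$ is elementary abelian; if $G/A$ contains a $V_4$ with generators $Ab_1, Ab_2$ and $\langle A, b_i\rangle$ abelian for some $i$, then Lemma~\ref{lm-e0}(1) forces $|A| \leq 2$, while in the remaining subcase a module-theoretic analysis of the faithful $V_4$-action on $A$ (each nontrivial element fixing a subgroup of order $\leq 2$) forces $|A| \leq 2$ as well, via the Sylow decomposition of $A$ and inspection of $\mathrm{Aut}(A)$. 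A parallel analysis rules out $|G/A| \in \{8, 12\}$: for example, $G/A \cong A_4$ would give an extension of $D_8$ or $Q_8$ by $C_3$, each of which contains an element of order $6$ outside $A$, contradicting $o(b) \leq 4$. This confines $|G/A|$ to $\{2, 3, 4, 6\}$.

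Each value is then matched with its corresponding structure: $|G/A| = 2$ invokes Lemma~\ref{exist}, with strictness forcing $|A| \geq 4$ so that $|G \setminus A| \geq 4$ can be split into two noncommuting sets of size~$\geq 2$; $|G/A| = 3$ together with $C_A(b) = 1$ for all $b \notin A$ is the Frobenius case; $|G/A| = 4$ gives $G/A \cong V_4$ and $|A| = 2 = |Z(G)|$, hence $A = Z(G)$ and $G$ is extra-special of order $8$, i.e.\ $D_8$ or $Q_8$; $|G/A| = 6$ forces $G/A \cong S_3$ (since $C_6$ contains an order-$6$ element), and a final inspection via Lemma~\ref{lm-e0}(1) applied to the preimages of the three $2$-cosets and the $3$-coset of $S_3$ narrows the possibilities to $A = 1$ (giving $G \cong S_3$) and $|A| = 4$ (giving $G \cong S_4$ with $A$ the normal Klein four-subgroup). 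I expect the principal obstacle to be the no-centralizing-coset subcase of $|G/A| = 4$: proving $|A| \leq 2$ there requires the delicate module-theoretic analysis of the faithful $V_4$-action on $A$, and an analogous argument is needed to eliminate the intermediate cases $|G/A| \in \{8, 12\}$; once $|G/A|$ is pinned down to $\{2,3,4,6\}$, the identification of $G$ in each case is fairly mechanical given Lemmas~\ref{Frobenius} and~\ref{exist}.
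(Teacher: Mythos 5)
Your outline reproduces the paper's proof almost step for step: harvest the bounds of Lemma~\ref{lm-e0} with $n=2$, conclude every nontrivial element of $G/A$ has order $2$ or $3$, show order-$3$ elements act fixed-point-freely on $A$ and the $3$-part of $|G/A|$ is $3$, pin $|G/A|$ down to $\{2,3,4,6\}$, and identify $G$ in each case; the converse via Lemma~\ref{exist}, Lemma~\ref{Frobenius} and explicit decompositions is also exactly the paper's route, and your $Q_8$, $D_8$ and $S_3$ decompositions check out. The one place you genuinely diverge is the case $4\mid |G/A|$: the paper first kills the odd part of $A$ by noting that otherwise the Klein four-group would be a Frobenius complement, then uses Lemma~\ref{index2} to force $A$ cyclic when $|A|\geqslant 8$, and finishes with a characteristic-subgroup argument and an extra-special contradiction at $|A|=4$; you instead propose a direct analysis of the faithful $V_4$-action on $A$ through $\mathrm{Aut}(A)$. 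That route does work and is arguably cleaner --- the coprime factorization $A_{2'}=C_{A_{2'}}(\bar b_1)C_{A_{2'}}(\bar b_2)C_{A_{2'}}(\bar b_3)$ kills the odd part, and for the $2$-part one can argue on $\Omega_1(A)$: rank $\geqslant 3$ gives an involution with fixed subgroup of order $\geqslant 4$, rank $2$ gives a nontrivial element of $V_4$ acting trivially on $\Omega_1(A)$ because $4\nmid |{\rm Aut}(\Omega_1(A))|=6$, and the cyclic case is immediate from ${\rm Aut}(C_{2^k})$ --- but as written it is only an assertion, and it carries the weight of the whole forward direction. Two further points to attend to when writing this up: your enumeration must also exclude $|G/A|=24$, which follows once $(C_2)^3\nleqslant G/A$ is established (this is the real content behind excluding $8$, and the paper handles it with an explicit order-$16$ extra-special contradiction); and $|G/A|=6$ is not a ``final inspection via Lemma~\ref{lm-e0}(1)'' --- the paper needs a full paragraph there, first killing the Hall $2'$-part of $A$ because $S_3$ is not a Frobenius complement and then forcing the Sylow $2$-subgroup of $A$ to be trivial or Klein, so you should budget comparable work for that case.
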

\begin{proof}
Suppose that $G$ has a strict $2$-split decomposition for the normal abelian subgroup $A$.  By Lemma \ref{lm-e0} (4), we know that if $b \in G \setminus A$, then $o (Ab) \leqslant 3$.  This implies that every element of $G/A$ has order at most $3$.

Assume that $G/A$ has an element of order $3$.  Let $T$ be a Sylow $3$-subgroup of $G$.  By Lemma \ref{lm-e0-d}, we see that if $|T| \geqslant 9$, then $T \leqslant A$ which is a contradiction.  Thus, we must have $|T| = 3$.  This implies that $3$ does not divide $|A|$.  If $1 \ne x \in T$, then by Lemma \ref{lm-e0}, we have $|C_A (x)| \leqslant 2$.  Suppose $|C_A (x)| = 2$, and let $a$ be the nonidentity element of $C_A (x)$.  Then $ax$ is an element of $G \setminus A$ and $o (ax) = 6$ which contradicts $o (ax) \leqslant 4$.  Thus, $|C_A (x)| = 1$.  This implies that $AT$ is a Frobenius group when $A > 1$.  If $G = AT$, then we must have $A > 1$ and so, we have conclusion (2).  Note that if $|G/A|$ is odd, then $G/A$ only contains elements of order $3$, so $G = AT$, and this completes the result in this case.

Thus, we may assume that $|G/A|$ is even, and so $G/A$ contains an involution, say $Ai$.  By Lemma \ref{lm-e0} (3), $|C_A (i)| \leqslant 2$.  Thus, $H = A \langle i \rangle$ has a normal subgroup $A$ of index $2$.  If $|A| \geqslant 3$, then $i$ is not in $Z(H)$ which implies that $Z (H) \leqslant A$.  This implies that $Z(H) \leqslant C_A (i)$ and hence $|Z (H)| \leqslant 2$ when $|A| \geqslant 3$.  If $G = H$, then since the split decomposition is strict, we must have $|A| \geqslant 4$ which gives conclusion (1).  Note that if $|G/A| = 2$, then this completes the result in this case.

We now assume that $|G/A|$ is even and at least $4$.  Suppose that $4$ divides $|G/A|$.  Thus, there exist involutions $Ai$ and $Aj$ so that $\langle Ai, Aj \rangle$ is a subgroup of $G/A$ of order $4$.  We may assume that $i$ and $j$ are $2$-elements.  Let $D = \langle i, j \rangle$.  Observe that $DA/A = \langle Ai, Aj \rangle$, so $|D:D \cap A| = |DA/A| = 4$, so by Lemma \ref{lm-e0} (1), we conclude that $D$ is not abelian.  Let $Q$ be the Hall $2$-complement of $A$.  Notice that $D \cap A$ is normalized by both $D$ and $A$, so $D \cap A$ is normal in $DA$.  Notice that $DQ/(D \cap A)$ is the semidirect product of $D/(D \cap A)$ acting on $Q$.  Every element of $D/(D \cap A)$ will act fixed-point freely on $Q$, so if $Q > 1$, then $DQ/(D \cap A)$ is a Frobenius group with Frobenius complement $D/(D\cap A)$.  However, $D/(D \cap A)$ is abelian and not cyclic which is a contradiction to being a Frobenius complement.  This implies that $Q = 1$ and $A$ is a $2$-group.

If $|A| \geqslant 8$, then $A$ has index $2$ in $H = A \langle i \rangle$ and $|Z (H)| \leqslant 2$.  By Lemma \ref{index2}, we know that $H$ is dihedral, semi-dihedral, or generalized quaternion.  In these groups, the only abelian subgroup of index $2$ is cyclic, and so, we deduce that $A$ is cyclic.  Let $C$ be the subgroup of $A$ of order $4$.  Since $A$ is cyclic, $C$ is characteristic which implies that $C$ is invariant under the action of $j$.  We let $C = \langle c \rangle$.  Since $i$ and $j$ do not centralize $c$, we must have $c^i = c^3 = c^j$.  We conclude that $c^{ij^{-1}} = c$, and so, $ij^{-1}$ is element of $G$ outside of $A$ that centralizes $C$.  In particular, $|C_A (ij^{-1})| \geqslant 4$ which contradicts Lemma \ref{lm-e0} (3).  Thus, we must have $|A| \leqslant 4$.

Suppose that $|A| = 4$ which implies that $|DA| = 16$.  Since $DA$ is a $2$-subgroup, we deduce that $|Z(DA)| \geqslant 2$.  Observe that $DA/A$ is abelian, so $(DA)' \leqslant A$.  If $(DA)' = A$, then $|DA:(DA)'| = 4$.  This implies that $DA$ is dihedral, semi-dihedral, or generalized quaternion.  It follows that $DA$ contains a cyclic subgroup $C$ of index $2$ and order $8$ in contradiction to Lemma \ref{lm-e0} (1).  Thus, we must have $|(DA)'| = 2$ since we know that $D$ is nonabelian.  By Lemma \ref{lm-e0} (3), $|C_A (i)| = |C_A (j)| = |C_A (ij)| = 2$.  This implies that $|Z(DA)| = 2$.  We know $DA$ is nonabelian, so $(DA)' > 1$, and thus, $(DA)' \cap Z(DA) > 1$.  We see that $Z(DA) = (DA)'$.  If $g \in DA$, then $[g^2,h] = [g,h]^2 = 1$ for all $h \in DA$.  It follows that $DA/(DA)'$ is elementary abelian, and so, $DA$ is extra-special.  This contradicts $|DA:Z(DA)| = 8$.  Therefore, $|A| = 2$ which implies that $DA$ has order $8$, and thus, $DA$ is either the dihedral group or the quaternion group.  If $|G/A| = 4$, then $G =DA$ and we have conclusion (3).

Continuing to assume that there exists involutions $Ai$ and $Aj$ as above, suppose that $3$ divides $|G/A|$.  Hence, there exists an element $k$ of order $3$ in $G \setminus A$.  It follows that $A \langle k \rangle$ is a cyclic subgroup of order $6$ that is not contained in $A$ which contradicts Lemma \ref{lm-e0} (1).  Thus, $3$ does not divide $|G/A|$.  On the other hand, suppose there is an involution $Al$ that is not contained in $A \langle i, j \rangle$.  Now, $E = A \langle i, j, l \rangle$ is a group of order $16$.  Notice that $Z(E) = A$ and $E/A$ is abelian, so $A = E'$.  As in the previous paragraph, this implies that $E$ is an extra-special group which is a contradiction since $|E:A| = |E:E'| = 8$.  We conclude that if $4$ divides $|G:A|$ then $|G:A| = 4$.

Finally, we know that the odd part of $|G/A|$ is at most $3$.  It follows that the remaining possibility is that $|G/A| = 6$.  Let $Q$ be the Hall $2$-complement of $A$.  Let $P$ be the Sylow $2$-subgroup of $A$ and let $T$ be a Sylow $2$-subgroup.  Observe that $P$ is an abelian subgroup of index $2$ in $T$.  Let $i$ be an element of $T \setminus P$, and observe that $i$ lies in $G \setminus A$.  Applying Lemma \ref{lm-e0} (3), $|C_A (i)| \leqslant 2$ which implies that $|C_P (i)| \leqslant 2$ and so, $|Z (T)| \leqslant 2$.  In light of Lemma \ref{index2}, we see that $T$ is dihedral, semi-dihedral, or generalized quaternion.  Since the only abelian subgroup of index $2$ in those groups are cyclic, we conclude that $P$ is cyclic.  We know that a cyclic $2$-group of order at least $8$ has no odd order automorphisms.  We conclude that the elements of order $3$ in $G \setminus A$ centralize $P$ which contradicts Lemma \ref{lm-e0} (3).  We determine that $|P| \leqslant 4$.  If $|P| = 2$, then the elements of order $3$ in $G \setminus A$ would centralize $P$.  This would imply that $G \setminus A$ would contain an element of order $6$ which contradicts Lemma \ref{lm-e0} (3).  Thus, we have that either $|P| = 1$ or $|P| = 4$.  Notice that if $|P| = 4$, then $P$ is a Klein $4$-group and $T$ is the dihedral group of order $8$ in this case.  Let $R$ be a Sylow $3$-subgroup of $G$.  Since $G/A$ has order $6$, we see that $AR/A$ is normal in $G/A$.   This implies that $PR$ is normal in $G$, and so, $RT = R(PT)$ is a subgroup of $G$.  If $P = 1$, then since $RT$ has order $6$ and intersects $A$ trivially, we cannot have $RT$ abelian.  On the other hand, we see that if $|P| = 4$, then $C_T (P) = P$, and so, $C_{RT} (P)$ which implies that $RT/P$ is isomorphic to a subgroup of the automorphism group of $P$ which is $S_3$.  It follows that $RT/P$ is isomorphic to $S_3$.  Let $Q$ be the Hall $2$-complement of $A$.  If $Q > 1$, note that Lemma \ref{lm-e0} (3) implies that $RT/P$ acts Frobeniusly on $Q$ which is a contradiction since $S_3$ cannot be a Frobenius complement.  Therefore, we conclude that $Q = 1$, and $G$ is either $S_3$ with $A = 1$ or $S_4$ with $A$ being the normal Klein $4$-subgroup.

Conversely, if $G$ has a normal subgroup $A$ of index $2$ with $|Z(G)| \leqslant 2$, then Lemma \ref{exist} implies that $G$ has a $2$-split decomposition.  If $G$ is a Frobenius group with abelian Frobenius complement $A$ of index $3$, then Lemma \ref{Frobenius} may be used to show that $G$ has a $2$-split decomposition with respect to $A$.  We have provided above, $2$-split decompositions for $G$ with respect to $A$ when $G$ is either the dihedral group or the quaternion group of order $8$ and $A$ is the center of $G$, and when $G$ is $S_3$ and $A= 1$.  Finally, when $G$ is $S_4$ and $A$ is the normal Klein $4$-subgroup, we have the $2$-split decomposition:
$$
\begin{array}{lcl}
A & = & \{ (1), (12)(34),(13)(24),(14)(23) \} \\[0.1cm]
B_1 & = & \{ (123),(124),(134),(234),(1234),(1243),(1324),(12),(13),(14) \} \\[0.1cm]
B_2 & = & \{ (132),(142),(143),(243),(1432),(1342),(1423),(34),(24),(23)\}.\\[0.1cm]
\end{array}$$
The lemma is proved. \end{proof}

Next, we classify the groups $G$ that have a strict $2$-split decomposition with respect to a nonnormal abelian subgroup $A$.

\begin{lm} \label{2-split-nonn}
Let $G$ be a nonabelian group.  Then $G$ has a strict $2$-split decomposition with respect to a nonnormal abelian group $A$ if and only if one the following occurs:
\begin{enumerate}
\item $G \cong S_3$ with $|G:A| = 3$.  I.e., $A$ is a nonnormal subgroup of order $2$.
\item $G \cong A_4$ with $|G:A| = 6$.  I.e. $A$ is a nonnormal subgroup of order $2$.
\end{enumerate}
\end{lm}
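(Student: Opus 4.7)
The plan is to use conjugation of $A$ to bound $|A|$, then reduce to small groups via Sylow and centralizer constraints, and finally verify the two stated examples.

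Since $A$ is nonnormal, pick a conjugate $A' = gAg^{-1} \neq A$. As $|A'| = |A|$ and $A' \neq A$, we have $A' \not\subseteq A$, so Lemma \ref{lm-e0}(1) applied with $U = A'$ and $n = 2$ gives $|A| = |A'| \leqslant 4$, while $|A| \geqslant 2$ since the trivial subgroup is always normal. Thus $|A| \in \{2,3,4\}$. In all cases, Lemma \ref{lm-e0-d} (with $n=2$) forces Sylow $p$-subgroups for $p \geqslant 5$ to lie in $A$, impossible as $|A| \leqslant 4$; hence $\pi(G) \subseteq \{2,3\}$, and the same lemma bounds any Sylow $3$-subgroup by $3$. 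Moreover, every abelian subgroup of $G$ has order at most $4$, and a $2$-group with this property is easily shown (by analyzing the commutator form $T/Z(T) \times T/Z(T) \to Z(T)$) to be either abelian of order at most $4$ or extra-special of order $8$, i.e., $D_8$ or $Q_8$. Combining these, $|G| \leqslant 24$.

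For $|A| = 3$: Lemma \ref{lm-e0}(2)-(3) forces $|C_A(b)| = 1$ for $b \notin A$ and $o(b) \leqslant 3$, so $C_G(A) = A$, $A$ is a Sylow $3$-subgroup, and $N_G(A)/A \hookrightarrow \mathrm{Aut}(Z_3) = Z_2$. Checking each nonabelian $G$ of order in $\{6,12,24\}$ with $\pi(G) \subseteq \{2,3\}$ and a nonnormal Sylow $3$ (notably $A_4$ and $S_4$) yields a Klein four-group $U$ with $U \cap A = 1$ and $|U| = 4$, violating Lemma \ref{lm-e0}(1). For $|A| = 4$: since $|C_A(b)| \leqslant 2 < |A|$ for $b \notin A$, we have $C_G(A) = A$, so $|G:A| \leqslant |\mathrm{Aut}(A)| \leqslant 6$, and every conjugate $A' \neq A$ satisfies $|A \cap A'| = 2$ by Lemma \ref{lm-e0}(1). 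A direct case check of all nonabelian groups of order at most $24$ containing a nonnormal abelian subgroup of order $4$ (for instance $S_4$, $A_4 \times Z_2$, $D_{12}$, $D_{24}$, $Q_{12}$, $\mathrm{SL}(2,3)$) produces in each an abelian $U$ with $|U \cap A|(|U : U \cap A| - 1) > 2$; e.g., in $D_{12} = Z_2 \times S_3$ with $A$ a nonnormal Klein four, the cyclic subgroup of order $6$ generated by the central involution together with a $3$-cycle satisfies $|U \cap A| = 2$ and $|U : U \cap A| = 3$, giving $2 \cdot 2 = 4 > 2$.

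For $|A| = 2$: Lemma \ref{lm-e0}(5) gives $Z(G) \leqslant A$; since $Z(G)$ is normal but $A$ is not, $Z(G) = 1$. Among nonabelian groups of order at most $24$ with $\pi(G) \subseteq \{2,3\}$ and trivial center, the only options are $S_3$, $A_4$, and $S_4$. For $S_4$, any subgroup of order $2$ is disjoint from some abelian $U$ of order $4$ (either $\langle (1234) \rangle$ or an appropriate Klein four-group), yielding $|U \cap A|(|U : U \cap A| - 1) = 1 \cdot 3 = 3 > 2$ and violating Lemma \ref{lm-e0}(1). Thus $G \cong S_3$ (with $|G:A| = 3$) or $G \cong A_4$ (with $|G:A| = 6$), with $A$ a nonnormal subgroup of order $2$. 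For the converse, in $S_3$ with $A = \{1, (12)\}$ take $B_1 = \{(13), (123)\}$ and $B_2 = \{(23), (132)\}$; in $A_4$ with $A = \{1, (12)(34)\}$, the ten elements of $G \setminus A$ form five commuting pairs, namely $\{(13)(24), (14)(23)\}$ and the four inverse pairs of $3$-cycles, so placing one member of each pair into $B_1$ and the other into $B_2$ gives the decomposition (distinct non-inverse $3$-cycles fail to commute, as their centralizers in $A_4$ are cyclic of order $3$, and no $3$-cycle commutes with a double transposition). The main obstacle is the $|A| = 4$ case, which amounts to a finite but tedious check across several small groups; organizing the candidates by Sylow $2$-structure keeps the enumeration tractable.
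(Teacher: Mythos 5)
Your proposal is correct and follows essentially the same route as the paper: bound $|A|\leqslant 4$ via a conjugate of $A$ and Lemma \ref{lm-e0}(1), deduce that every abelian subgroup has order at most $4$ so that $G$ is one of $S_3$, $D_8$, $Q_8$, $A_4$, $S_4$, eliminate the unwanted cases by exhibiting an abelian $U$ violating Lemma \ref{lm-e0}(1) or by the $Z(G)\leqslant A$ argument, and verify the same two explicit decompositions. The only cosmetic difference is that you organize the elimination by $|A|$ rather than by group, and in the $|A|=4$ case your worked example ($D_{12}$) is a group already excluded by the abelian-subgroup bound; the one group that actually needs checking there is $S_4$, for which the paper's choice of a cyclic $U$ of order $4$ meeting the normal Klein four-subgroup in a different involution supplies the required witness.
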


\begin{proof}
First of all, for every abelian subgroup $U$ of $G$, either $U \leqslant A$ or $|U| \leqslant 4$ by Lemma \ref{lm-e0} (1).  Since $A$ is a {\em nonnormal} subgroup of $G$, we have $A^g \nleqslant A$, for some $g\in G$, which is an abelian subgroup of $G$.  Since $A^g$ is not contained in $A$, we obtain $|A| = |A^g| \leqslant 4$. We conclude that all abelian subgroups of $G$ have order at most $4$.  By Lemma \ref{lm-e0} (3), the elements outside $A$ have orders at most $4$.   We deduce that $G$ is a $\{2, 3\}$-group.  Since every group of order $16$ contains an abelian subgroup of order $8$ and every group of order 9 is abelian, this forces $|G|=2^a\cdot 3^b$ where $a \leqslant 3$ and $b\leqslant 1$.  Now, it is easy to check that $G$ is isomorphic to one of the following groups: $S_3$, $D_8$, $Q_8$, $A_4$, or  $S_4$.  In $S_3$, if $A$ is nonnormal, then $|G:A| = 3$.  For $A_4$, notice that the nonnormal subgroups have order $2$ or $3$.  Notice that for subgroups of order $3$, the existence of the Klein $4$-subgroup and Lemma \ref{lm-e0} (1) would imply that $n \geqslant 3$ which is a contradiction.  Thus, we must have $|A| = 2$ which yields $|G:A| = 6$.   For $S_4$, we can find an abelian subgroup $U$ of order $4$ that intersects $A$ trivially.  To see this, observe that if $|A| = 3$, then we can take $U$ to be any abelian subgroup of order $4$.  If $A$ intersects the Klein $4$-subgroup trivially, take $U$ to be the Klein $4$-subgroup.  Otherwise, $A$ will intersect the Klein $4$-subgroup in a subgroup of order $2$.  Take $U$ to be a cyclic subgroup of order $4$ that intersects the Klein-subgroup in a different subgroup of order $2$, and it follows that $U$ and $A$ will intersect trivially.  This implies by Lemma \ref{lm-e0} (1) that any $n$-split decomposition for $A$ must have $n \geqslant 3$.  To see that $D_8$ and $Q_8$ cannot have an $n$-split decomposition with $A$ nonnormal, observe that since the decomposition is strict, we have $Z (G) \leqslant A$, and this implies $A$ is normal when $G$ is $D_8$ or $Q_8$.

Conversely, suppose that $G$ is one of the groups given and $A$ has the given index.  We now show that these groups have a $2$-split decomposition for the given one of the possible $A$'s.  In each case, we can obtain $2$-decompositions for the other possible $A$'s by conjugating.   When $G = S_3$, we take $$A = \{1, (12)\},  \ \ B_1=\{(13), (123)\}, \ \  B_2=\{(23), (132)\}.$$  For $G = A_4$ and $|A| = 2$, we present the $2$-split decomposition:
$$\begin{array}{lcl}
A & = & \{ (1), (12)(34) \}; \\[0.1cm]
B_1 & = & \{ (13)(24), (123),(124),(134),(234) \}, \\[0.1cm]
B_2 & = & \{ (14)(23), (132),(142),(143),(243) \}.\\[0.1cm]
\end{array}$$
The lemma is proved.
\end{proof}

\section{Groups having a strict $3$-split decomposition}
We now work to determine which groups have a strict $3$-split decomposition.

\begin{lm} \label{2-3-split}
If $G$ has a strict $2$-split decomposition over $A$, then $G$ has a strict $3$-split decomposition over $A$ except for the following: (1) $G \cong S_3$ (with $|A| = 2$), (2) $G \cong D_8$ or $Q_8$ and $|A| = 4$, or (3) $G \cong D_{10}$ (with $|A| = 5$).
\end{lm}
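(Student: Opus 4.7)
The plan is to walk through the classification of strict $2$-split decompositions given by Lemma~\ref{lm-e1} (normal $A$) and Lemma~\ref{2-split-nonn} (nonnormal $A$), and in each case test whether Lemma~\ref{lm-ee0} yields a strict $3$-split over the same subgroup $A$. When neither condition (a) nor (b) of Lemma~\ref{lm-ee0} applies, a strict $3$-split of $G$ would require $|G\setminus A|\geqslant 6$, so checking this count identifies the genuine exceptions.

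For the normal cases of Lemma~\ref{lm-e1}: when $|G:A|=2$ with $Z(G)=1$, $G$ is Frobenius with abelian kernel $A$ of odd order $\geqslant 5$ and the $2$-split partitions $G\setminus A$ (of size $|A|$) into two noncommuting pieces, so for $|A|\geqslant 7$ some $B_i$ has size $\geqslant 4$ and Lemma~\ref{lm-ee0}(a) fires; $|A|=5$ forces $G\cong D_{10}$ with $|G\setminus A|=5$, producing exception~(3). When $|G:A|=2$ with $|Z(G)|=2$, Lemma~\ref{lm-eeee0} gives $|B_1|=|B_2|=|A|/2$, so (a) applies for $|A|\geqslant 8$, while $|A|=4$ gives $|G\setminus A|=4$ and yields exception~(2) ($G=D_8$ or $Q_8$), and the borderline $|A|=6$ is handled below. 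When $|G:A|=3$ ($G$ Frobenius with abelian complement of order $3$), fixed-point-free action forces $|A|\geqslant 4$, so $|B_i|=|A|\geqslant 4$ and (a) applies. When $|G:A|=4$, so $G\in\{D_8,Q_8\}$ and $A=Z(G)$, we have $|G\setminus A|=6$ and the commuting graph on $G\setminus Z(G)$ is a disjoint union of three edges, and I exhibit directly a partition of the six noncentral elements into three noncommuting pairs. When $|G:A|=6$, $G=S_4$ with $A$ the Klein $4$-group has $|G\setminus A|=20$ and (a) applies, while $G=S_3$ with $A=1$ has $|G\setminus A|=5$ and is an additional $S_3$ case subsumed under exception~(1). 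The nonnormal cases of Lemma~\ref{2-split-nonn} are immediate: $G\cong S_3$ with $|A|=2$ has $|G\setminus A|=4<6$, giving exception~(1); and $G\cong A_4$ with $|A|=2$ has $|B_1|=|B_2|=5$ in the $2$-split exhibited in Lemma~\ref{2-split-nonn}, so (a) applies.

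The main obstacle is the $|A|=6$ sub-case with $|Z(G)|=2$ (the nonabelian groups of order $12$ possessing an abelian subgroup of index $2$), where $|B_i|=3$ disables (a) and condition (b) must be verified. Writing the $2$-split via Lemma~\ref{lm-eeee0} as $B_1=\{a_1t,\dots,a_lt\}$ and $B_2=\{za_1t,\dots,za_lt\}$ with $l=|A:Z(G)|=3$ and $C_A(t)=Z(G)=\langle z\rangle$, Lemma~\ref{lm-eee0} shows that $a_it\in B_1$ and $za_jt\in B_2$ commute iff $C_A(t)a_i=C_A(t)a_j$, and since $z\in C_A(t)$ this forces $a_i=a_j$. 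Thus any $i\ne j$ provides a noncommuting cross-set pair, so condition (b) of Lemma~\ref{lm-ee0} applies and a strict $3$-split exists, completing the case analysis.
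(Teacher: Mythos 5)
Your proof is correct, and it reaches the same conclusion by a genuinely different route. The paper's argument is a short reduction from the \emph{given} decomposition $G=A\uplus B_1\uplus B_2$: either some $|B_i|\geqslant 4$ and Lemma \ref{lm-ee0}(a) finishes immediately, or $|B_1|,|B_2|\leqslant 3$ forces $|G\setminus A|\leqslant 6$, hence $|A|\leqslant 6$ (since $|A|$ divides $|G\setminus A|$) and $|G|\leqslant 12$, after which the finitely many small groups are dispatched with an unwritten ``easy to check.'' You instead route the argument through the classification of strict $2$-split decompositions (Lemmas \ref{lm-e1} and \ref{2-split-nonn}) and test each family against Lemma \ref{lm-ee0} or the counting bound $|G\setminus A|\geqslant 6$. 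What your approach buys is that the ``easy to check'' step is actually carried out: your handling of the order-$12$ groups ($|A|=6$, $|Z(G)|=2$) via Lemma \ref{lm-eee0} and condition (b) of Lemma \ref{lm-ee0} is exactly the detail the paper omits, and your explicit three-pair partition of the six noncentral elements of $D_8$ and $Q_8$ over their centers confirms the paper's remark after the lemma that these cases are not exceptions. What the paper's approach buys is brevity and independence from the classification lemmas. One small point in your favor: you correctly note that $S_3$ with $A=1$, which does admit a strict $2$-split by Lemma \ref{lm-e1}(4), is also an exception because $|G\setminus A|=5<6$; the parenthetical ``$|A|=2$'' in exception (1) of the statement is therefore not exhaustive, and your reading of that exception as simply $G\cong S_3$ agrees with the paper's own subsequent observation that $S_3$ cannot have a strict $3$-split decomposition for any choice of $A$.
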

\begin{proof}
Let $G$ be a group having a strict $2$-split decomposition over $A$;  say $G = A \uplus B_1 \uplus B_2$.  If either of $B_1$ or $B_2$ contains at least four elements, then it follows from Lemma \ref{lm-ee0} (a) that $G$ has a strict $3$-split decomposition, and the result is proved.  We assume, therefore, that $|B_1| \leqslant 3$ and $|B_2| \leqslant 3$.  Since $|A|$ divides $|B_1 \uplus B_2|$, we have $|A| \leqslant 6$ and so $|G| \leqslant 12$.  It is now easy to check that $G$ has a strict  $3$-split decomposition, except the cases mentioned above.
\end{proof}

Please note that Lemma \ref{2-3-split} does not imply that the groups $G$ mentioned in the conclusion do not have strict $3$-split decompositions, just that the strict $2$-split decompositions for the given $A$'s do not yield strict $3$-split decompositions.  Indeed, the strict $2$-split decompositions for $D_8$ and $Q_8$ over their centers do yield strict $3$-split decompositions.  On the other hand, since $S_3$ only has $6$ elements, it is not possible for $S_3$ to have a strict $3$-split decomposition.
Let $G=D_{10}$ and let $A$ be an abelian subgroup of $G$ that does not  have order $5$.
Then $A$ will intersect the subgroup of order $5$ trivially, and if $G$ has a strict $n$-split decomposition with respect to $A$, then $n\geqslant 4$ by Lemma \ref{lm-e0} (1).

\begin{lm} \label{3-split prel}
Suppose $G$ has a strict $3$-split decomposition of $G$ with respect to a normal abelian subgroup $A$.  Then the following are true:
\begin{enumerate}
\item Every element of $G/A$ has order at most $4$.
\item $G/A$ is a $\{ 2, 3 \}$-group.
\item If $3$ divides $|G/A|$, then a Sylow $3$-subgroup of $G$ has order $3$.
\end{enumerate}
\end{lm}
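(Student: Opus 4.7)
The plan is to specialize the general inequalities of Lemmas \ref{lm-e0} and \ref{lm-e0-d} to $n = 3$ and read off each conclusion. Since $A$ is assumed normal, the full strength of Lemma \ref{lm-e0} (4) is available, which is precisely what governs the orders of elements in $G/A$.

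For (1), I will invoke Lemma \ref{lm-e0} (4) directly: any $b \in G \setminus A$ satisfies $o(Ab) \leqslant n+1 = 4$, while the identity coset has order $1$; hence every element of $G/A$ has order at most $4$. Part (2) is then immediate from (1) by Cauchy's theorem: any prime divisor $p$ of $|G/A|$ is the order of some element of $G/A$, hence $p \leqslant 4$, so $p \in \{2, 3\}$, and $G/A$ is a $\{2,3\}$-group.

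For (3), I will apply Lemma \ref{lm-e0-d} with $p = 3$ and $n = 3$. The key numerical check is that hypothesis (2) of that lemma holds, since $p^2 = 9 > 6 = 2n$; therefore any $3$-subgroup of $G$ of order at least $9$ is contained in $A$. Applied to a Sylow $3$-subgroup $T$ of $G$, this reads: if $|T| \geqslant 9$ then $T \leqslant A$, forcing $TA/A$ to be trivial and contradicting $3 \mid |G/A|$. Hence $|T| \leqslant 3$, and since $3 \mid |G|$ we conclude $|T| = 3$. No substantive obstacle arises; the only point of care is verifying the numerical inequality $p^2 > 2n$ in the application of Lemma \ref{lm-e0-d} for part (3), which works precisely because $n = 3$ is small enough that the borderline prime $p = 3$ still satisfies the second alternative of that lemma.
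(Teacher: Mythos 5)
Your proof is correct and follows essentially the same route as the paper: both specialize the preliminary lemmas to $n=3$, using the bound $o(Ab)\leqslant n+1=4$ for (1), deducing (2) from the absence of elements of prime order $\geqslant 5$, and applying Lemma \ref{lm-e0-d} with $p=3$ (via $p^2=9>6=2n$) for (3). If anything, your citation of Lemma \ref{lm-e0} (4) for part (1) is the more precise one, since Lemma \ref{lm-e0-d} alone would not exclude, say, an element of order $8$ in $G/A$.
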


\begin{proof}
Note that $G/A$ does not contain any element of order greater than or equal to $5$ by Lemma \ref{lm-e0-d}.  Also, by Lemma \ref{lm-e0-d}, we know that if $3$ divides $|G/A|$, then a Sylow $3$-subgroup of $G$ has order $3$.
\end{proof}

\begin{lm} \label{3-split-D8}
Suppose the nonabelian group $G$ has a strict $3$-split decomposition of $G$ with respect to a normal abelian subgroup $A$ of odd order.
\begin{enumerate}
\item If a Sylow $2$-subgroup $S$ of $G$ is isomorphic to $D_8$, then one of the following occurs:
     \begin{enumerate}
     \item $A = 1$ and $G \cong S_4$
     \item $G = AS$, such that $S = \langle x, y \mid x^4 = y^2 = x^y x = 1 \rangle ( = D_8)$ and $A = \langle u, v \mid u^3 = v^3 = [u,v] = 1 \rangle,$ where $u, v$ can be chosen such that $u^x = v$, $v^x = u^{-1}$, $u^y = u$, $v^y = v^{-1}$.
     \end{enumerate}
\item If a Sylow $2$-subgroup $S$ of $G$ is isomorphic to $Q_8$, then $G = AQ_8$ is a Frobenius group.
\end{enumerate}

Furthermore, each of the three groups listed have a strict $3$-split decomposition with respect to $A$.
\end{lm}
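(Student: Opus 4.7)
My plan is to first constrain $|G/A|$ using Lemma~\ref{3-split prel}, then handle the two Sylow $2$-types separately, and finally exhibit the required decompositions. Write $S$ for the Sylow $2$-subgroup of $G$. Since $|A|$ is odd and $|S|=8$, the $2$-part of $|G/A|$ equals exactly $8$, and combined with Lemma~\ref{3-split prel}(1)--(3) this forces $|G/A| \in \{8, 24\}$. Among groups of order $24$ with every element of order at most $4$, the only one with Sylow $2$-subgroup $D_8$ is $S_4$, and there is none with Sylow $2$-subgroup $Q_8$ (the candidates $C_3 \times Q_8$, $\mathrm{SL}_2(3)$, and the dicyclic group of order $24$ all contain elements of order strictly greater than $4$). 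So the only possibility with $|G/A| > 8$ is Case~(1) with $G/A \cong S_4$.

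To treat Case~(1)(a), I would prove $A = 1$ by a Frobenius-complement obstruction. By Lemma~\ref{3-split prel}(3) we have $3 \nmid |A|$, so every prime $p$ dividing $|A|$ satisfies $p \geqslant 5$; for such $p$, the Sylow $p$-subgroup $A_p$ is characteristic in $A$ and hence normal in $G$, and combining Lemma~\ref{lm-e0}(3) with the fact that $|C_{A_p}(b)|$ is a power of $p$ yields $C_{A_p}(b) = 1$ for every $b \in G \setminus A$. Thus $G/A \cong S_4$ embeds in $\mathrm{Aut}(A_p)$ as a Frobenius complement, contradicting the fact that the Sylow $2$-subgroup $D_8$ of $S_4$ is neither cyclic nor generalized quaternion. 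Therefore $A = 1$ and $G \cong S_4$.

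In the remaining situations $G = AS$. Fixing $x \in S$ of order $4$, Lemma~\ref{lm-e0}(2) gives $C_A(x) = 1$, so the Frobenius action of $\langle x \rangle$ on $A$ forces $z = x^2$ to invert $A$; in particular $C_A(z) = 1$. For Case~(2) with $S \cong Q_8$, every nonidentity element of $Q_8$ is either $z$ or of order $4$, so every nontrivial $s \in S$ satisfies $C_A(s) = 1$ and $G = AQ_8$ is Frobenius with kernel $A$. For Case~(1)(b) with $S = \langle x, y\rangle \cong D_8$, the involutions $y$ and $xy$ satisfy $|C_A(y)|, |C_A(xy)| \in \{1, 3\}$; if either were $1$, the Klein $4$-subgroup $\langle z, y\rangle$ or $\langle z, xy\rangle$ would act fixed-point-freely on $A$, producing a Frobenius group with Klein four complement, which is impossible. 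Hence $|C_A(y)| = |C_A(xy)| = 3$. Running the Frobenius-complement obstruction from the previous paragraph on the Sylow $p$-subgroups of $A$ for $p \geqslant 5$ then shows $A$ is a $3$-group. Setting $F_y = C_A(y)$, the coprime action of $\langle y \rangle$ on the $3$-group $A$ gives $A = F_y \oplus [A, y]$; since both $z$ and $y$ invert $[A, y]$, the element $zy = x^2 y$ fixes $[A, y]$ pointwise, so $[A, y] \leqslant C_A(x^2 y) = F_y^x$, which has order $3$. Therefore $|A| = 9$, and choosing $u$ to generate $F_y$ and setting $v := u^x$ yields the stated relations $u^x = v$, $v^x = u^{-1}$, $u^y = u$, $v^y = v^{-1}$.

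Finally, for the strict $3$-split decompositions: for $G \cong S_4$, I would exhibit an explicit proper $3$-coloring of the commuting graph of $S_4 \setminus 1$ (chromatic number $3$, obtained by placing the three double transpositions in distinct color classes and then consistently distributing the transpositions, $3$-cycles, and $4$-cycles). For the Frobenius group $G = A \rtimes Q_8$, use the decomposition $G \setminus A = \biguplus_{a \in A}(S^a \setminus 1)$ into the distinct Frobenius complements: elements lying in distinct complements never commute, so any proper $3$-coloring of the commuting graph of $Q_8 \setminus 1$ --- for instance $\{z\}, \{i, j, k\}, \{-i, -j, -k\}$ --- applied within each $S^a$ yields color classes of sizes $|A|, 3|A|, 3|A|$, all greater than $1$. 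For the $72$-element group of Case~(1)(b), one constructs an analogous explicit partition by subdividing each coset $As$ with $|C_A(s)| = 3$ into noncommuting pieces via Lemma~\ref{lm-eeee0} and recombining them according to the $D_8$-action on $A$. I expect the main obstacle to be the structural analysis in Case~(1)(b), especially the identification $[A, y] \leqslant F_y^x$ via the double-inversion argument that pins down $|A| = 9$; a secondary challenge is the explicit $3$-split construction in that case, where the non-Frobenius action precludes the uniform coloring trick that handles Case~(2).
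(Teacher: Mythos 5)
Your structural analysis follows the same outline as the paper, but it has one genuine logical gap at a load-bearing step. You assert that $C_A(x)=1$ for $x$ of order $4$ implies, ``by the Frobenius action of $\langle x\rangle$ on $A$,'' that $z=x^2$ inverts $A$, so $C_A(z)=1$. That implication is false in general: a generator acting without nontrivial fixed points does not force the whole cyclic group to act fixed-point-freely (an element of order $4$ acting on $Z_3$ by inversion has trivial centralizer there while its square centralizes everything). Lemma \ref{lm-e0} only gives $|C_A(z)|\leqslant 3$ directly, so the possibility $|C_A(z)|=3$ must be excluded by a separate argument. The paper does this by noting that $C_A(z)$ is normalized by all of $S$ (as $z$ is central in $S$), that $C_S(C_A(z))$ has index at most $2$ in $S$ and hence contains an abelian subgroup $K$ of order $4$, and that $C_A(z)K$ is then an abelian subgroup of order $12$ meeting $A$ in a subgroup of order $3$ and index $4$, violating Lemma \ref{lm-e0}(1). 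Everything downstream of your claim --- the Frobenius conclusion for $Q_8$, the Klein-four argument forcing $|C_A(y)|=3$, and the step $[A,y]\leqslant C_A(x^2y)$ via ``both $z$ and $y$ invert $[A,y]$'' --- depends on $C_A(z)=1$, so this hole must be filled.

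Two smaller points. First, in the cases $G=AS$ you never rule out $A=1$ (i.e.\ $G\cong D_8$ or $Q_8$), which your Frobenius-type arguments silently assume; this follows at once from Lemma \ref{lm-e0}(5) since $Z(S)\neq 1$, but it needs saying. Second, the converse half is only sketched: the $3$-split decomposition of $S_4$ is asserted rather than exhibited, and for the $72$-element group $A\rtimes D_8$ you defer entirely to ``recombining according to the $D_8$-action.'' That recombination is the bulk of the paper's proof: it requires computing the centralizers and conjugacy classes of $x$, $x^2$, $y$, $uy$, and so on, to verify that the specific unions such as $Ax\cup[A,y]y\cup[A,xy]xy\cup[A,x^2y]vx^2y\cup[A,x^3y]uvx^3y$ are genuinely noncommuting sets; an arbitrary recombination of the pieces from Lemma \ref{lm-eeee0} does not work, so this cannot be waved through. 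On the positive side, your reduction to $|G/A|\in\{8,24\}$ by enumerating the order-$24$ groups, your Frobenius-complement obstruction forcing $A=1$ when $G/A\cong S_4$ and forcing $A$ to be a $3$-group in case (1)(b), and your explicit decomposition of the Frobenius group $AQ_8$ are all correct and are legitimate (mildly different) routes to the same conclusions.
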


\begin{proof}
Suppose $G/A$ is not a $2$-group and has a Sylow $2$-subgroup that is nonabelian of order $8$.  It follows that $G/A$ has order $24$ by Lemma \ref{3-split prel}.  If $G/A$ has a normal Sylow $2$-subgroup, then an element of order $3$ will centralize the center of $S$ and this yields an element of order $6$ which is a contradiction.  Thus, a Sylow $2$-subgroup of $G/A$ is not normal, and hence $G/A$ has three Sylow $2$-subgroups; so the action of $G/A$ on its Sylow $2$-subgroups yields a homomorphism into $S_3$.  It is not difficult to see that this implies that $G/A$ is isomorphic to $S_4$ when $G/A$ is not a $2$-group.

We may assume that $S$ is either $D_8$ or $Q_8$.  Let $t$ be the central involution of $S$.  By Lemma \ref{lm-e0} (3), we have $|C_A (t)| \leqslant 3$.  If $C_A (t)\neq 1$, then $|C_A (t)| = 3$.  Observe that $S$ acts on $C_A (t)$, and if $K = C_S (C_A (t))$, then $|S/K| = 2$, so $|K| = 4$.  Notice that $C_A (t) K$ is an abelian subgroup of $G$ of order $12$ that is not contained in $A$ and this contradicts Lemma \ref{lm-e0}(1) which shows that such a subgroup has size at most $2 \cdot 3 = 6$.  Thus, $C_A (t) = 1$ and so, $t$ inverts every element of $A$.

Now assume that $S$ is isomorphic to $D_8=\langle x, y | x^4=y^2=x^yx=1\rangle$ as above, and this implies that $G/A$ is either $D_8$ or $S_4$.  Suppose $A = 1$, then since $G$ has a strict $3$-split decomposition, we cannot have $G \cong D_8$ by Lemma \ref{lm-e0} (5).  Suppose that $A > 1$. The previous paragraph implies that $|C_A (x^2)| = 1$ and hence $|C_A (x)| = 1$.  Since $S = D_8$ is not a Frobenius complement, we must have $C_A(y) \neq 1$ and $|C_A(y)|=3$ by Lemma \ref{lm-e0} (1).  Recall that Lemma \ref{lm-e0-d} implies that $3$ does not divide $|G/A|$ since $3$ divides $|A|$. We conclude that $G/A$ is a $2$-group and $G = A D_8$.  By Fitting's lemma, we have $A = C_A(y) \times [A,y]$ and $y$ inverts every element of $[A,y]$.  Clearly, $[A,y]\ne 1$ and $yx^2$ centralizes $[A,y]$. Thus, $[A,y] \leqslant C_A (yx^2)$ which implies that $|[A,y]| = 3$ and $|A| = 9$.  Let $1 \ne u \in C_A (y)$ and $v=u^x$. Then $A = \langle u, v \rangle$, and we have  $$v^y= u^{xy}=u^{yx^{-1}}=u^{x^{-1}}=(u^x)^{x^2}=v^{x^2}=v^{-1},$$ and $v^x=u^{x^2}=u^{-1}$.

Assume $S$ is isomorphic to $Q_8$.  Since a Sylow $2$-subgroup of $S_4$ is dihedral, we do not have $G/A \cong S_4$, and so, $G/A \cong Q_8$ which implies that $G = AS$.  Lemma \ref{lm-e0} (5) implies that $A > 1$.  Let $t$ be the involution in $S$. From the second paragraph, we have $|C_A(t)|=1$, and so, $\langle t\rangle$ acts fixed-point-freely on $A$.  This implies that $Q_8$ acts fixed-point-freely on $A$; hence $G = AS$ is a Frobenius group with abelian kernel $A$.

Conversely, we now show that the three groups named have strict $3$-split decompositions for the given $A$.

For $G = S_4$ and $A = 1$, we obtain a $3$-split decomposition as follows:
$$\begin{array}{ccl}
B_1 & = & \{ (12)(34), (13), (14), (1234), (1243), (123), (142), (234) \}, \\[0.1cm]
B_2 & = & \{ (13)(24), (12), (23), (1324), (1342), (132), (134), (243) \}, \\[0.1cm]
B_3 & = & \{ (14)(23), (34), (24), (1423), (1432), (124), (143) \}.\\[0.1cm]
\end{array}$$

Suppose $G = AS$, such that $S = \langle x, y \mid x^4 = y^2 = x^y x = 1 \rangle ( = D_8)$ and $A = \langle u, v \mid u^3 = v^3 = [u,v] = 1 \rangle,$ where $u, v$ can be chosen such that $u^x = v$, $v^x = u^{-1}$, $u^y = u$, $v^y = v^{-1}$.  Observe that $C_A (y) = \langle u \rangle$, $C_A (x^2y) = \langle v \rangle$, $C_A (xy) = \langle uv^2 \rangle$, and $C_A (x^3y) = \langle uv \rangle$.  We have $$Ay = [A,y]y \cup [A,y]uy \cup [A,y] u^2 y.$$  Similarly, $$Ax^2y = [A,x^2y] x^2y \cup [A,x^2y] v x^2y \cup [A,x^2y] v^2x^2y,$$ $$Axy = [A,xy] xy \cup [A,xy] uv^2 xy \cup [A,xy] u^2v xy, \ \   $$ and $$ \ \ \ \ \ Ax^3y = [A,x^3y] x^3y \cup [A,x^3y] uv x^3y \cup [A,x^3y] u^2v^2 x^3y.$$

Note first that $x^2$ inverts both $u$ and $v$, so $x^2$ inverts every element of $A$.  This implies that $A \langle x \rangle$ is a Frobenius group.  Thus, no pair of elements in any of the cosets $Ax$, $Ax^2$, and $Ax^3$ will commute with each other, and $Ax$ is contained in the conjugacy class of $x$, $Ax^2$ is contained in the conjugacy class of $x^2$, and $A x^3$ is contained in the conjugacy class of $x^3$.  Note that $x^y = x^3$; so $Ax \cup Ax^3$ is contained in the conjugacy class of $x$.  Observe that $|{\rm cl} (x)| \geqslant 2|A| = 18$.  On the other hand, $\langle x \rangle \leqslant C_G (x)$, and thus, $4 \leqslant |C_G (x)| = |G|/|{\rm cl} (x)| \leqslant 72/18 = 4$.  We conclude that $C_G (x) = \langle x \rangle$ and ${\rm cl} (x) = Ax \cup Ax^3$.  Also, we know that $|{\rm cl} (x^2)| \geqslant |A| = 9$ and $S \leqslant C_G (x)$, so $8 = |S| \leqslant |C_G (x^2)|=|G|/|{\rm cl} (x^2)| \leqslant 72/9 = 8$.  We deduce that $C_G (x^2) = S$ and ${\rm cl} (x^2) = Ax^2$.

We see that $C_G (x)$ contains one element from each of $Ax$, $Ax^2$ and $Ax^3$.  In particular, $C_G (x)$ does not intersect $$[A, y]y, \ [A, xy]xy,  \ [A, x^2y]v x^2y, \ {\rm or} \  [A, x^3y]uvx^3y.$$  For any element $g$ of $Ax$, the centralizer of $g$ in $G$ will be conjugate to $C_G (x)$, and thus, we deduce that $g$ commutes with no element in these four cosets.  It follows that no element of $Ax$ commutes with any element in these four cosets.  Similarly, no element of $Ax^3$ commutes with any element in $[A,y]uy$, $[A,xy]u^2vxy$, $[A,x^2y]x^2y$, or $[A,x^3y]x^3y$.

Note that $[A,y] \langle y \rangle$ will be a Frobenius group, so $y$ is conjugate to all the elements in $[A,y] y$ and no two elements in $[A,y]y$ commute with each other.  Similarly, $xy$, $x^2y$, and $x^3y$ (respectively) are conjugate to all of the elements in the cosets $[A,xy] xy$, $[A,x^2y] x^2y$, and $[A,x^3y] x^3y$ (respectively) and no two elements in any of those cosets will commute.  We see that $y^x = x^2 y$, so $[A,y]y \cup [A,x^2y] x^2y \subseteq {\rm cl} (y)$.  This implies that $|[A,y]| + |[A,x^2y]| = 3 + 3 = 6 \leqslant |{\rm cl} (y)|$.  On the other hand, we know that $\langle u \rangle \langle y, x^2 \rangle \leqslant C_G (y)$.  This implies that $12 \leqslant |C_G (y)| = |G|/|{\rm cl} (y)| \leqslant 72/6 = 12$.  We determine that ${\rm cl} (y) = {\rm cl} (x^2y) = [A,y]y \cup [A,x^2y] x^2y$ and $C_G (y) = \langle u \rangle \langle y, x^2 \rangle$.  In a similar fashion, one can see that ${\rm cl} (xy) = {\rm cl} (x^3y) = [A,xy] xy \cup [A,x^3y] x^3y$ and $C_g (xy) = \langle uv^2 \rangle \langle xy, x^2 \rangle$.

We have now that $C_G (x^2)$ consists of elements in $Ax$ $Ax^2$, $Ax^3$, $[A,y]y$, $[A,xy]xy$, $[A,x^2y]x^2y$, $[A,x^3y]$ and does not intersect $[A,y]u^2y$, $[A,xy]u^2v xy$, $[A,x^2y]v^2 x^2y$, or $[A,x^3y]u^2v^2 x^3y$.  Using conjugacy, we see that no element in $Ax^2$ will commute with any element in $[A,y]u^2y$, $[A,xy]u^2v xy$, $[A,x^2y]v^2 x^2y$, or $[A,x^3y]u^2v^2 x^3y$.

Since $u$ commutes with $y$ and the elements in $[A,y]$, we see that all the elements in $[A,y]uy$ are conjugate as are the elements in $[A,y]u^2y$ and no two elements in either of those two cosets will commute.  Observe that $$(uy)^{x^2} = u^2 y, \ \  (uy)^x = v x^2y \ \  {\rm and} \ \  (uy)^{x^3} = v^2 x^2y.$$  Arguing as above, we can show that $${\rm cl} (uy)=[A,y]uy \cup [A,y]u^2y \cup [A,x^2y] vx^2y \cup [A,x^2y] v^2 x^2y,$$ and $C_G (uy) = \langle uy \rangle$ is cyclic of order $6$.  Similarly, we can obtain $${\rm cl} (u^2vxy) = [A,xy]u^2v xy \cup [A,xy] uv^2 xy \cup [A,x^3y]uv x^3y \cup [A,x^3y] u^2v^2 x^3 y,$$ and $C_G (u^2vxy) = \langle u^2v xy \rangle$ is cyclic of order $6$.

We have that $C_G (y) = \{ 1, u, u^2, y, x^2, x^2y, uy, ux^2, ux^2y, u^2y, u^2x^2, u^2x^2y \}$.  Notice that $u^{x^2y} = u^{-1}$, so $[A,x^2y] = \{ 1, u, u^2 \}$.  It follows that $C_G (y)$ consists of three elements of $A$, three elements of $Ax^2$, the coset $[A,x^2y] x^2y$ and one element in each of $[A,y]y$, $[A,y]uy$, and $[A,y]u^2y$. So $y$ does not commute with any elements of the cosets $[A,x^3y]x^3y$, $[A,x^2y]vx^2y$, and $[A,x^3y]uvx^3y$.  Noting that conjugating, we see that this applies to all the elements in the coset $[A,y]y$.

Observe that $C_G (uy) = \{ 1, uy, u^2, y, u, u^2 y \}$, and so, $C_G (uy)$ contains three elements of $A$, and one element in each of $[A,y]y$, $[A,y]uy$, and $[A,y]u^2y$.  Working the same way, we can see that $C_G (vuy)$ and $C_G (v^2uy)$ are composed from the same number of elements in the same sets.  Thus, we see that no element in $[A,y]uy$ commutes with any element in the cosets $[A,x^2y] v^2 x^2y$, $[A,xy]u^2vxy$, and $[A,x^3y] u^2v^2 x^3y$.

Using similar arguments, we can show that the following sets form a strict $3$-split decomposition of $G$ with respect to $A$:
$$\begin{array} {ccl}
B_1 & = & Ax \cup [A,y]y \cup [A,xy]xy \cup [A,x^2y]v x^2 y \cup [A,x^3y]uv x^3y, \\[0.2cm]
B_2 & = & Ax^2 \cup [A,y]uy \cup [A,xy]u^2v xy \cup [A,x^2y]v^2 x^2y \cup [A,x^3y] u^2v^2x^3y, \\[0.2cm]
B_3 & = & Ax^3 \cup [A,y]u^2y \cup [A,xy]uv^2 xy \cup [A,x^2y] x^2y \cup [A,x^3y]x^3y.\\[0.1cm]
\end{array}$$

Finally, suppose that $G = AS$ is a Frobenius group with abelian Frobenius kernel $A$ and Frobenius complement $Q_8 = \{ \pm 1, \pm i, \pm j, \pm k \}$.  We claim to obtain a strict $3$-split decomposition for $G$ with respect to $A$ by taking $$B_1 = A(-1),  \ \  B_2 = Ai \cup Aj \cup Ak \ \ {\rm and} \ \  B_3 = A(-i) \cup A(-j) \cup A(-k).$$  To see this, observe that since $G$ is a Frobenius group, we know that no two elements in any coset of $A$ that is not $A$ will commute.   Suppose $g \in Ai$ and $h \in Aj$.  When $g$ and $h$ lie in different Frobenius complements, then they do not commute.  However, if they lie in the same Frobenius complement, they will be two elements of order $4$ that are not inverses.  In $Q_8$, this implies that they do not commute.  Similarly, we can show that no two elements of either $B_2$ or $B_3$ commute proving our claim.
\end{proof}

\begin{lm} \label{3-split-odd}
Let $G$ be a nonabelian group, and let $S$ a Sylow $2$-subgroup of $G$.  Then $G$ has a strict $3$-split decomposition with respect to a normal abelian subgroup $A$ of odd order if and only if one of the following holds:
\begin{enumerate}
\item $|S| = 1$ and $G$ is a Frobenius group, with Frobenius kernel $A$ satisfying $|G:A| = 3$.
\item $|S| = 2$ and $G$ is a Frobenius group with Frobenius kernel $A$ satisfying $|A| \geqslant 7$ and $|G:A| = 2$.
\item $|S| = 2$ and $G = Z_3 \times F$ where $F$ is a Frobenius group with Frobenius kernel $A \cap F$ and Frobenius complement $S$.
\item $S$ is cyclic of order $4$ and $G$ is a Frobenius group with kernel $A$ satisfying $|G:A| = 4$.
\item $S$ is cyclic of order $4$ and $G = AS$ such that $A = [A,S]$ and $A = C_A(t) \times [A, t]$, where $t$ is the involution in $S$, $|C_A (t)| = 3$, and $[A,t] > 1$.
\item $S$ is a Klein $4$-group, $A = 1$, and $G \cong A_4$.
\item $S$ is a Klein $4$-group and $G = AS$, where $A$ is elementary abelian of order $9$ or $27$, and either $G = S_3 \times S_3$ where $A = A_3 \times A_3$ or $G= AS$ satisfies $A= C_A (s_1) \times C_A (s_2) \times C_A (s_3)$ where $S=\{1, s_1, s_2, s_3\}$ and $|C_A (s_i)| = 3$ for $i = 1, 2, 3$.
\item $S \cong D_8$ and $G$ satisfies one of the groups in Conclusion (1) of Lemma \ref{3-split-D8}.
\item $S \cong Q_8$ and $G = AQ_8$ is a Frobenius group.
\end{enumerate}
\end{lm}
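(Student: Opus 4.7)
The approach is a case analysis on $|S|$. Since $|A|$ is odd, $S \cap A = 1$, so by Lemma \ref{lm-e0}(1) every abelian subgroup $U \leq S$ satisfies $|U| \leq n + 1 = 4$. As every $2$-group of order at least $16$ contains an abelian subgroup of order $8$, this forces $|S| \leq 8$, and $|S| = 8$ forces $S \in \{D_8, Q_8\}$; these cases yield conclusions (8) and (9) by direct appeal to Lemma \ref{3-split-D8}. For $|S| \leq 4$, Lemma \ref{3-split prel} restricts $G/A$ to a $\{2, 3\}$-group whose elements have order at most $4$ and whose Sylow $3$-subgroup has order at most $3$; moreover, any $k \in G \setminus A$ of order $3$ must act fixed-point-freely on $A$, since otherwise $C_A(k)\langle k\rangle$ would be abelian, not contained in $A$, with $|C_A(k)| \cdot 2 \geq 6 > n$, contradicting Lemma \ref{lm-e0}(1).

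For $|S| = 1$, this immediately gives $|G:A| = 3$ with $G$ Frobenius, conclusion (1). For $|S| = 2$ with involution $i$, Lemma \ref{lm-e0}(3) yields $|C_A(i)| \in \{1, 3\}$; when $|G:A| = 2$, these produce (2) (Frobenius, with strictness forcing $|A| \geq 7$ so that the unique nontrivial coset $Ai$ can be split into three pieces of size at least $2$) and (3) (via Fitting's decomposition $A = C_A(i) \times [A,i]$, whence $G = C_A(i) \times ([A,i] \rtimes \langle i\rangle) = Z_3 \times F$). The alternative $|G:A| = 6$ would force $G/A \cong S_3$ (since $Z_6$ has an element of order $6$), but then any $3$-element must centralize any $C_A(i) \cong Z_3$ (as $|\mathrm{Aut}(Z_3)| = 2$), forcing $C_A(i) = 1$, so both $k$ and $i$ act fpf on $A$ and $S_3$ becomes a Frobenius complement on an abelian group, which is impossible. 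For $|S| = 4$ cyclic with generator $x$ and $t = x^2$, Lemma \ref{lm-e0}(1) applied to $C_A(x)\langle x\rangle$ forces $C_A(x) = 1$; the two cases $|C_A(t)| \in \{1, 3\}$ yield (4) and (5), with $x$ forced to invert $C_A(t)$ in (5) (else $C_A(t)\langle x\rangle$ is abelian of order $12$) and $[A, t] > 1$ required by Lemma \ref{lm-e0}(5).

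The Klein-four case is the main obstacle. Write $S = \{1, s_1, s_2, s_3\}$. First, no prime $p \geq 5$ can divide $|A|$: the Sylow $p$-subgroup of $A$ would admit a fixed-point-free $V_4$-action (since each $|C_A(s_i)| \leq 3$), making $V_4$ a Frobenius complement on an abelian group, which is impossible since $V_4$ is neither cyclic nor generalized quaternion. Hence $A$ is a $3$-group, and since $|V_4| = 4$ is coprime to $3$, Maschke's theorem makes $A$ semisimple as a module over $(\mathbb{Z}/|A|)[V_4]$ with each simple summand elementary abelian of order $3$ on which $V_4$ acts through a character. So $A$ is elementary abelian and decomposes into isotypic components $A = A_0 \oplus A_1 \oplus A_2 \oplus A_3$ for the four characters of $V_4$, with $C_A(s_k) = A_0 \oplus A_k$. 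The bound $|C_A(s_k)| \leq 3$ forces $|A_0| \cdot |A_k| \leq 3$ for each $k$, giving either $A = A_0$ ($G$ abelian, contradiction) or $A_0 = 0$ with each $|A_k| \in \{1, 3\}$. Strictness via Lemma \ref{lm-e0}(5) eliminates $|A| \in \{1, 3\}$ (otherwise some $s_k \in Z(G) \setminus A$), leaving $|A| \in \{9, 27\}$, which produce the two structures in (7). If $3 \mid |G/A|$, then $G/A \cong A_4$ (the only order-$12$ group with Klein Sylow $2$ and no element of order $6$), and the $3$-element cyclically permutes the $A_k$'s, yielding either $A = 0$ with $G \cong A_4$ (conclusion (6)) or $|A| = 27$ with the $3$-element fixing the diagonal copy of $\mathbb{F}_3$ in $A \cong \mathbb{F}_3^3$, contradicting the fpf action.

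The converse invokes Lemma \ref{Frobenius} for (1), (2), (4), (9), Lemma \ref{3-split-D8} for (8), and explicit constructions (based on Lemma \ref{lm-eeee0}) for the remaining cases: (3) lifts a strict $2$-split of $F$ through the $Z_3$-factor; (5) splits the cosets $Ax, Ax^2, Ax^3$ via Lemma \ref{lm-eeee0} and recombines; (6) partitions the three nontrivial $V_4$-cosets in $A_4$ (each a conjugacy class of $3$-cycles); and (7) chooses noncommuting pieces from $As_1, As_2, As_3$ guided by the isotypic decomposition of $A$. The most delicate steps are the Klein-four module-theoretic analysis and the construction of explicit strict $3$-split decompositions in (7), where one must verify noncommutativity across the three cosets.
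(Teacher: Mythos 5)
Your forward direction follows the paper's route: a case analysis on $|S|$, with $|S|\leqslant 8$ forced by Lemma \ref{lm-e0}(1), and the cases $D_8$, $Q_8$ deferred to Lemma \ref{3-split-D8}. Your treatment of the Klein-four case is in fact cleaner than the paper's: where the paper applies Fitting's lemma repeatedly to the involutions $t$, $r$, $rt$, you decompose $A$ into the four isotypic components for the characters of $V_4$ and read off $|A_0|\cdot|A_k|\leqslant 3$, which yields $|A|\in\{1,9,27\}$ and both structures of conclusion (7) in one stroke. Two small cautions there: the tool is really the coprime-action (Fitting) decomposition rather than Maschke, since $A$ is not known to be elementary abelian a priori (your bound $|A_k|\leqslant 3$ recovers that a posteriori); and in the $|S|=2$, $|G:A|=6$ subcase, your claim that a $3$-element centralizes $C_A(i)$ presupposes that it normalizes $C_A(i)$ --- the cleaner route is that $|C_A(i)|=3$ forces $3\mid |A|$, which Lemma \ref{3-split prel}(3) makes incompatible with $3\mid |G/A|$.

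The genuine gap is in the converse. For conclusions (5) and (7) (especially $|A|=27$), Lemma \ref{lm-eeee0} splits the relevant cosets into too many pieces --- in (5) it gives $Ax$, $Ax^3$, and three pieces of $Ax^2$, five sets in all --- so one must merge pieces coming from different cosets, and that requires proving, for instance, that no element of $As$ commutes with any element of the chosen piece $[A,t]zt$. This cross-coset noncommutativity is not automatic and is where the paper does its real work: it determines the exact conjugacy classes and centralizers of $s$, $t$, $a_1s_1$, and so on by counting, and only then assembles $B_1$, $B_2$, $B_3$. You flag this step as ``delicate'' but give no argument, so as written the backward implication for (5) and (7) is asserted rather than proved. (Your description of (6) is also slightly off: the three sets used are not cosets of the Klein subgroup in $A_4$ --- there are only two nontrivial such cosets --- though a valid partition of the eleven nonidentity elements into three noncommuting sets is easy to exhibit.)
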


\begin{proof}
We suppose first that $G$ has a strict $3$-split decomposition with respect to $A$.  If $|S| = 1$, then $|G|$ is odd.  We deduce that $G/A$ has order $3$ and $3$ does not divide $|A|$ by Lemma \ref{3-split prel}.  Let $b \neq 1$ be a $3$-element in $G\backslash A$.  Then $|C_A(b)| \leqslant 3$ by Lemma \ref{lm-e0} (3).  Since $|A|$ is odd and not divisible by $3$, we conclude that $C_A (b) = 1$. Thus $G = A \langle b \rangle$ is a Frobenius group.

Let $|S| = 2$ and consider the involution $t$ in $S$. By Lemma \ref{lm-e0} (3), we know that $|C_A (t)| \leqslant 3$.  Since $|A|$ is odd, either $|C_A (t)| = 1$ or $|C_A (t)| = 3$.  If $|C_A (t)| = 3$, then by Lemma \ref{3-split prel}, we see that $G/A$ is a $2$-group.  Thus, $G = AS$ and by Fitting's lemma, we have $A = C_A (t) \times [A,t]$.  It is not difficult to see that $G = C_A (t) \times [A,t] S$ where $C_A (t) \cong Z_3$ and $F = [A,t]S$ is a Frobenius group with Frobenius kernel $[A,t] = F \cap A$ and Frobenius complement $S$.  We now consider the case $|C_A (t)| = 1$.  If $G = AS$, then it is not difficult to see that $G$ is a Frobenius group with Frobenius kernel $A$ and Frobenius complement $S$.  If $G \ne AS$, then $G/A$ is a $\{ 2, 3 \}$-group and not a $2$-group.  Thus, $3$ divides $|G:A|$ which implies that $|G:A| = 6$ and $3$ does not divide $|A|$.  We can find an element $b \in G$ of order $3$ so that $B = \langle b, t \rangle$ is a Hall $\{ 2,3 \}$-subgroup of $G$.  Notice that $B \cong G/A$, so $B$ is not cyclic.  On the other hand, arguing as in the first paragraph of this proof, we have $C_A (b) = 1$ which would imply that $B$ is a Frobenius complement and this is a contradiction since $B$ not abelian implies that $B \cong S_3$ and $S_3$ cannot be a Frobenius complement.  This completes the case when $|S| = 2$.

Suppose $S$ is cyclic of order $4$. Let $t$ be the involution in $S$.  As in the case $|S| = 2$, one can prove that $|C_A(t)|$ equals either $1$ or $3$.  If $|C_A (t)| = 3$, then $3$ does not divide $|G:A|$.  This implies that $G = AS$.  By Lemma \ref{lm-e0} (2), we have $(|S|-1)|C_A (S)| \leqslant 3$ which implies that $|C_A (S)| = 1$ and hence, $A = [A,S]$.  By Fitting's lemma, we have $A = [A,t] \times C_A (t)$.  Notice that if $[A,t] = 1$, then $t$ will be central in $G$, and this violates Lemma \ref{lm-e0} (5), so we must have $[A,t] > 1$.

We now assume that $C_A (t) = 1$.  This implies that $AS$ is a Frobenius group.  If $3$ divides $|G:A|$, then we can choose an element $b$ of order $3$ so that $\langle S, b \rangle$ is a Hall $\{ 2, 3 \}$-subgroup of $G$.  Notice that $b$ and $t$ will have to commute which gives $G/A$ an element of order $6$ which is a contradiction.  Thus, $G = AS$ as desired.

Let $S$ be an elementary abelian subgroup of order $4$. Since $S$ cannot be a Frobenius complement, either $|A| = 1$ or there exists an involution $t$ in $S$ such that $|C_A(t)| = 3$.  Suppose that $|A| = 1$.  Since $G$ is not abelian, we must have $|G| = 12$, and since $A_4$ is the only nonabelian group of order $12$ that does not have an element of order $6$, we deduce that $G \cong A_4$.

Suppose now that there exists an involution $t$ so that $|C_A (t)| = 3$.  By Lemma \ref{lm-e0-d}, $A$ contains a Sylow $3$-subgroup of $G$ and hence $G = AS$. Let $r$ be an involution in $S$ such that $r \neq t$.  If $C_A (r) \leqslant C_A (t)$ then $C_{[A, t]} (r) = 1$.  It follows that $[A,t] \leqslant [A,r]$.  Since both $r$ and $t$ invert all the elements of $[A,t]$, it follows that $rt$ centralizes $[A,t]$.  In particular, we have $[A,t] \leqslant C_A (rt)$.  Applying Lemma \ref{lm-e0} (3) to $rt$, we obtain $|C_A (rt)| \leqslant 3$ so $|[A, t]| \leqslant 3$ and $|A| \leqslant 3^2$.  If $|A| = 3$, then since $|C_A (t)| = 3$, we have $A = C_A (t)$.  It follows that $t$ is central in $G$, and since the decomposition is strict, this contradicts Lemma \ref{lm-e0} (5).  If $|A| = 3^2$, then $C_A (rt) = [A,t]$.  Since $C_A (r) \leqslant C_A (t)$, we see that every element of $C_A (r)$ is centralized by both $r$ and $t$ and so is centralized by $rt$.  Since $C_A (t) \cap C_A (rt) = 1$, we deduce that $C_A (r) = 1$.  This implies that $r$ inverts every element of $A$.  It follows that $rt$ inverts every element of $C_A (t)$, and so, $G = [A,t] \langle t \rangle \times C_A (t) \langle rt \rangle \cong S_3 \times S_3$.

We may assume that $C_A (t)$, $C_A (r)$, and $C_A (rt)$ are three distinct subgroups of order $3$.  Applying Fitting's lemma, we have $A = C_A (t) \times [A,t]$.  Since $r$ and $t$ commute, it follows that $C_A (r)$ is $t$-invariant.  By Fitting's lemma, we have $C_A (r) = [C_A (r),t] \times C_{C_A (r)} (t)$.  Since $|C_A (r)| = 3$ and $C_A (r) \ne C_A (t)$, we see that $C_A (r) = [C_A (r), t] \leqslant [A,t]$.   Using Fitting's lemma once more, we have $[A,t] = C_A (r) \times [[A,t],r]$.  Again, both $r$ and $t$ will invert all the elements of $[[A,t],r]$ so $rt$ centralizes every element in $[[A,t],r]$.  Notice that $rt$ will invert every element in $C_A (t) C_A (r) = C_A (t) \times C_A (r)$.  Since $C_A (rt)$ is nontrivial, we must have $[[A,t],r] = C_A (rt)$.  We conclude that $A=C_A(t)\times C_A(r)\times C_A(rt)$ and $|A| = 27$.

If $|S| = 8$, then since $S$ is nonabelian, $S\cong Q_8$ or $S\cong D_8$ and the conclusion
follows from Lemma \ref{3-split-D8}.

Finally, if $|S| > 8$, then $S$ contains an abelian subgroup of order $8$ which is impossible by Lemma \ref{lm-e0}.

Conversely, we show that if $G$ is one of the groups mentioned, then $G$ has a strict $3$-split decomposition.  If $G$ is a Frobenius group with abelian Frobenius kernel of odd order and abelian Frobenius complement of order $2$, $3$, or $4$, then the result follows from Lemma \ref{Frobenius} and Lemma \ref{2-3-split}.

Suppose that $G = Z \times F$ where $Z \cong Z_3$ and $F$ is a Frobenius group with abelian Frobenius kernel $B$ and Frobenius complement $S$ with $|S| = 2$.  We write $Z$ for $Z \times 1$ and $F$ for $1 \times F$.  Notice that $Z(G)=Z\cong Z_3$.  Take $A=Z B$.  Let $t$ be the involution in $S$ and let $z$ be a generator of $Z$.  Since $F$ is a Frobenius group, we see that the elements of $Bt$ do not commute.  Since $z$ and $z^2$ are central in $G$, it will follow that the elements of $Bzt$ and $Bz^2t$ do not commute.  It is not difficult to see that $G = A \uplus Bt \uplus Bzt \uplus Bz^2t$ is a strict $3$-split decomposition for $G$.

Next, suppose that $S$ is cyclic of order $4$ and $G = AS$ such that $A = [A,S]$ and $A = C_A(t) \times [A,t]$, where $t$ is the involution in $S$, $|C_A (t)| = 3$, and $[A,t] > 1$.  Let $S = \langle s \rangle$ and $C_A (t) = \langle z \rangle$, and note that $t = s^2$.   Notice that $C_A (s) = 1$, so $C_G (s) = S$.  By Lemma \ref{lm-eee0}, no two elements in $As$ commute, and in a similar fashion, no two elements in $As^3$ commute.  Applying Lemma \ref{lm-eee0} to $t = s^2$, we see that no two elements of $[A,t] t$ commute.  Since $z$ and $z^2$ centralize $t$ and $[A,t]$, we conclude that no two elements in $[A,t] zt$ and no two elements in $[A,t] z^2t$ commute.  Since $A$ is partitioned by $[A,t] \cup [A,t] z \cup [A,t] z^2$, it follows that the coset $At$ is partitioned by $[A,t] t \cup [A,t] zt \cup [A,t] z^2 t$.

The conjugacy class of $s$ in $G$ has size $|G:C_G (s)| = |G:S| = |A|$.  On the other hand, it is easy to see that $G' \leqslant A$ and ${\rm cl} (s) \subseteq G's \subseteq As$.  We see that $|A| = |{\rm cl} (s)| \leqslant |G'| \leqslant |A|$.  This implies that $A = G'$ and ${\rm cl} (s) = As$.  Similarly, we obtain ${\rm cl} (s^3) = As^3$.  We have that $C_G (t) = C_A (t) \times S$.  This implies that $|{\rm cl} (t)| = |[A,t]|$.  Since $[A,t] \langle t \rangle$ is a Frobenius group, we have $[A,t] t \subseteq {\rm cl} (t)$, and hence, we conclude that ${\rm cl} (t) = [A,t] t$.  The centralizer of $s$ has the form $\{1, s, s^2, s^3 \}$ and every element in $As$ is conjugate to $s$.  For any element in $As$, its centralizer will consist of $1$, itself, a conjugate of $s^2$, and a conjugate of $s^3$.  Since the conjugacy classes of $s$, $s^2$, and $s^3$ are $As$, $[A,t] t$, and $As^3$ respectively, we conclude that no element in $As$ will commute with any element of $[A,t] zt$.  Similarly, no element in $As^3$ will commute with an element of $[A,t] z^2t$.  Thus, if we take $B_1 = As \cup [A,t] zt$, $B_2 = [A,t] t$, and $B_3 = As^3 \cup [A,t] z^2t$, then we obtain a strict $3$-split decomposition of $G$.

When $G = A_4$, we get a strict $3$-split decomposition with $A = 1$ by taking $B_1 = \{(12)(34), (123), (142),(234)\}$, $B_2 = \{(13)(24), (132), (134), (243)\}$, and $B_3 = \{(14)(23), (124), (143)\}$.

For $G = S_3 \times S_3$, we obtain a strict $3$-split decomposition for $A = A_3 \times A_3$ by taking
$$\begin{array} {ccl}
B_1 & = & \{ ((12),(1)), ((13),(1)), ((23),(1)), ((123),(12)), ((123),(13)), ((123),(23)) \},\\
B_2 & = & \{ ((12),(123)), ((13),(123)), ((23),(123)), ((1),(12)), ((1),(13)), ((1),(23)) \}, \\
B_3 & = & \{ ((12),(132)), ((13),(132)), ((23),(132)), ((132),(12)), ((132),(13)), \\
& & ((132),(23)), ((12),(12)), ((12),(13)),((12),(23)), ((13),(12)), ((13),(13)), \\
& & ((13),(23)), ((23),(12)), ((23),(13)), ((23),(23)) \}.
\end{array}$$

Next, suppose $S$ is a Klein $4$-group and $G = AS$, where $A$ is elementary abelian of order $27$, and $G= AS$ satisfies $A = C_A (s_1) \times C_A (s_2) \times C_A (s_3)$ where $S = \{1, s_1, s_2, s_3 \}$ and $|C_A (s_i)| = 3$ for $i = 1, 2, 3$.  For $i = 1, 2, 3$, let $C_A (s_i) = \langle a_i \rangle$.  Notice that $s_1$ will normalize $C_A (s_2)$ and since $C_A (s_1) \cap C_A (s_2) = 1$, we have that $C_A (s_2) = [C_A (s_2),s_1] \leqslant [A, s_1]$.  Similarly, $C_A (s_3) \leqslant [A,s_1]$.  By Fitting's lemma, $A = C_A (s_1) \times [A,s_1]$, and so, we determine that $[A,s_1] = C_A (s_2) \times C_A (s_3)$.  Similarly, we have $[A,s_2] = C_A (s_1) \times C_A(s_3)$ and $[A,s_3] = C_A (s_1) \times C_A (s_2)$.  By Lemma \ref{lm-eee0}, we have that no two elements in $[A,s_1]s_1$ commute.  Since $a_1$ commutes with $s_1$ and $[A,s_1]$, we see that no two elements in $[A,s_1]a_1 s_1$ will commute and no two elements in $[A,s_1]a_1^2 s_1$ will commute.

We see that $C_G (s_1) = C_A (s_1) S$, so the conjugacy class of $s_1$ has size $|G:C_G (s_1)| = |A:C_A (s_1)| = |[A,s_1]|$.  Notice that $[A,s_1] \langle s_1 \rangle$ is a Frobenius group, so $[A,s_1] s_1 \leqslant {\rm cl} (s_1)$.  By consideration of sizes, we have ${\rm cl} (s_1) = [A,s_1] s_1$.  Similarly, ${\rm cl} (s_2) = [A,s_2] s_2$ and ${\rm cl} (s_3) = [A,s_3] s_3$.  Since $C_A (s_1) \leqslant [A,s_2] \cap [A,s_3]$, we have $C_A (s_1) s_2 \subseteq [A,s_2]s_2$ and $C_A (s_1) s_3 \subseteq [A,s_3] s_3$.  Hence, $C_A (s_1)$ contains no elements in $[A,s_2]a_2 s_2$, $[A,s_2] a_2^2 s_2$, $[A,s_3]a_3 s_3$, and $[A,s_3] a_3^2 a_3$.  Since every element in $[A,s_1]s_1$ is conjugate to $s_1$, we conclude that no element in $[A,s_1]s_1$ commutes with any element in $[A,s_2]a_2 s_2$ or $[A,s_3]a_3 s_3$. Similarly, no element in $[A,s_2]s_2$ commutes with any element in $[A,s_1]a_1s_1$ or $[A,s_3] a_3^2 s_3$ and no element in $[A,s_3]s_3$ commutes with any element in $[A,s_1]a_1^2 s_1$ or $[A,s_2]a_2^2 s_2$.

Notice that $\langle a_1s_1 \rangle \leqslant C_G (a_1s_1)$, so $6 \leqslant |C_G (a_1 s_1)|$.  Since $a_1$ commutes with $s_1$ and $[A,s_1]$, we conclude that all the elements $[A,s_1] a_1 s_1$ are conjugate as are all the elements in $[A,s_1] a_1^2 s_1$.  On the hand, since $a_1 \in C_A (s_1) \leqslant [A,s_2]$, we know that $s_2$ inverts $a_1$, so $(a_1 s_1)^{s_2} = a_1^{s_1} s_2^{s_1} = a_1^2 s_1$.  Hence, $a_1 s_1$ and $a_1^2 s_1$ are conjugate.  This implies that $[A,s_1] a_1 s_1 \cup [A,s_1] a_1^2 s_1 \subseteq {\rm cl} (a_1 s_1)$.  This implies that $18 \leqslant |{\rm cl} (a_1 s_1)|$, and thus, $|C_G (a_1s_1)| \geqslant |G|/|{\rm cl} (a_1s_1)| = 108/18 = 6$.  We deduce that $C_G (a_1 s_1) = \langle a_1 s_1 \rangle$.  Now, $C_G (a_1 s_1)$ contains no elements in $[A,s_3]a_3^2 s_3$.  Using conjugacy, we conclude that no element in $[A,s_1]a_1 s_1$ commutes with any element in $[A,s_3]a_3^2 s_3$.  Similarly, no element in $[A,s_1]a_1^2 s_1$ will commute with any element in $[A,s_2] a_2^2 s_2$, and no element in $[A,s_2] a_2 s_2$ commutes with any element in $[A,s_3]a_3s_3$.  Hence, we have a strict $3$-split decomposition for $G$ with respect to $A$ by taking $$B_1 = [A,s_1] s_1 \cup [A,s_2] a_2 s_2 \cup [A,s_3] a_3 s_3,$$ $$B_2 = [A,s_2] s_2 \cup [A,s_1] a_1 s_1 \cup [A,s_3] a_3^2 s_3,$$ and $$B_3 = [A,s_3] s_3 \cup [A,s_1] a_1^2 s_1 \cup [A,s_2] a_2^2 s_2.$$
The remaining groups were handled in Lemma \ref{3-split-D8}.
\end{proof}

\begin{lm} \label{3-split-even}
Let $G$ be a nonabelian group with a normal abelian subgroup $A$ with even order.  Then $G$ has a strict $3$-split decomposition with respect to $A$ if and only if one of the following holds:
\begin{enumerate}
\item $G$ is a Frobenius group with Frobenius kernel $A$ and a Frobenius complement of order $3$.
\item $A$ has index $2$ in $G$, $|A| \geqslant 6$, and $|Z (G)| = 2$.
\item $G$ is either $D_8$ or $Q_8$ and $A = Z(G)$.
\item $G \cong S_4$ and $A$ is the Klein $4$-subgroup.
\end{enumerate}
\end{lm}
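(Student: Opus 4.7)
The plan is to prove both directions of the equivalence, with the forward direction driven by a case analysis on $|G/A|$ using the structural constraints collected in Lemmas~\ref{lm-e0}, \ref{lm-e0-d}, and \ref{3-split prel}, and the converse obtained either from earlier lemmas or by exhibiting explicit decompositions. Suppose $G=A\uplus B_1\uplus B_2\uplus B_3$ is a strict $3$-split decomposition with $A$ normal abelian of even order. By Lemma~\ref{3-split prel}, $G/A$ is a $\{2,3\}$-group of exponent at most $4$ with any Sylow $3$-subgroup of $G$ of order at most $3$; by Lemma~\ref{lm-e0}(5) we have $Z(G)\leqslant A$ and $|Z(G)|\leqslant 3$; and by Lemma~\ref{lm-e0}(3) every $b\in G\setminus A$ satisfies $|C_A(b)|\leqslant 3$ and $o(b)\leqslant 6$. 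These constraints restrict $|G/A|$ to a short list of values, and I would proceed case by case.

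In the case $|G/A|=3$, a Sylow $3$-subgroup $T$ of $G$ has order $3$ and intersects $A$ trivially (since $3\mid|G/A|$), so Lemma~\ref{lm-e0}(2) applied to a generator of $T$ gives $C_A(t)=1$, making $G=AT$ a Frobenius group with abelian kernel $A$ and yielding conclusion~(1). In the case $|G/A|=2$, the subcase $|Z(G)|=1$ is excluded: it would force $G$ to be a Frobenius group with kernel $A$ of even order and complement of order $2$, impossible since Frobenius kernels and complements have coprime orders; hence $|Z(G)|=2$, and the strict $3$-split requirement $|A|=|G\setminus A|\geqslant 6$ gives conclusion~(2). In the case $|G/A|=4$, let $P$ be the Sylow $2$-subgroup of $A$ (characteristic in $A$ and hence normal in $G$) and $S$ a Sylow $2$-subgroup of $G$ containing $P$; combining Lemma~\ref{lm-e0}(1) applied to abelian subgroups of $S$ with Lemma~\ref{lm-e0}(2) applied to involutions and order-$4$ elements of $G\setminus A$ forces $|P|=2$, so $|A|=2$, $|G|=8$, and $A=Z(G)$, giving conclusion~(3) ($G\in\{D_8,Q_8\}$). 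In the case $|G/A|=6$, Lemma~\ref{3-split prel} forces $G/A\cong S_3$, and the argument in the final $|G/A|=6$ paragraph of the proof of Lemma~\ref{lm-e1} specialises under the evenness of $|A|$ to yield $G\cong S_4$ with $A$ the normal Klein $4$-subgroup, giving conclusion~(4). The remaining possibilities $|G/A|\in\{8,12,24\}$ are excluded: a Sylow $2$-subgroup of $G$ would then have order at least $16$ and so contain an abelian subgroup of order at least $8$, violating Lemma~\ref{lm-e0}(1) unless that abelian subgroup is contained in $A$, and tracing through the consequences together with Lemma~\ref{3-split prel} produces a contradiction.

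For the converse, conclusion~(1) follows from Lemma~\ref{Frobenius} upgraded via Lemma~\ref{2-3-split}; conclusion~(2) follows from Lemma~\ref{exist} upgraded via Lemma~\ref{2-3-split}, with the hypothesis $|A|\geqslant 6$ ensuring that the exceptional cases of Lemma~\ref{2-3-split} do not arise; conclusion~(3) is verified by exhibiting explicit strict $3$-split decompositions of $D_8$ and $Q_8$ over $Z(G)$, partitioning the six noncentral elements into three noncommuting pairs; and conclusion~(4) is obtained by applying Lemma~\ref{2-3-split} to the strict $2$-split decomposition of $S_4$ displayed inside the proof of Lemma~\ref{lm-e1}.

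The main obstacle is the forward direction's $|G/A|=4$ case together with the elimination of $|G/A|\in\{8,12,24\}$. Unlike the odd-order setting of Lemma~\ref{3-split-odd}, the parity of $|A|$ does not directly produce a fixed-point-free action of a Sylow $2$-subgroup on $A$, so one has to argue via Lemma~\ref{lm-e0}(1) applied to several carefully chosen abelian subgroups of the Sylow $2$-subgroup of $G$, exploiting the fact that whenever $|A|\geqslant 4$ and $|G/A|\geqslant 4$ some such abelian subgroup meets $A$ in more than $3$ elements.
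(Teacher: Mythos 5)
Your overall strategy is the paper's own: the forward direction is a case analysis driven by Lemmas \ref{lm-e0}, \ref{lm-e0-d} and \ref{3-split prel} (the paper organises it by first treating elements of order $3$ outside $A$, then $|G/A|=2$, then $|G/A|\geqslant 4$ split according to the Sylow $2$-subgroup $T$ of $A$ via Lemma \ref{index2}, whereas you case on the value of $|G/A|$ directly; the tools are the same), and the converse is obtained exactly as in the paper by feeding strict $2$-split decompositions from Lemmas \ref{Frobenius}, \ref{exist} and \ref{lm-e1} into Lemma \ref{2-3-split}. However, there is one concrete gap in the forward direction. In your $|G/A|=4$ case you write that forcing $|P|=2$ (for $P$ the Sylow $2$-subgroup of $A$) gives ``so $|A|=2$.'' That is a non sequitur: $A$ has even order but may have a nontrivial Hall $2'$-part $N$, and nothing you have said rules out, say, $A\cong Z_2\times Z_5$. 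The missing step is the one the paper supplies: once $|T|=2$ one has $T\leqslant Z(G)$, whence $|C_A(x)|\leqslant 3$ forces $C_N(x)=1$ for every $x\in G\setminus A$, so $G/A$ acts fixed-point-freely on $N$; since $G/A$ is shown to be elementary abelian of order $4$ (no element of order $4$ in $G/A$, again via Lemma \ref{lm-e0}(1)), it cannot be a Frobenius complement, so $N=1$. This Frobenius-complement argument is a genuine idea, not bookkeeping, and it is also what disposes of the odd part in the $|G/A|\geqslant 4$ cases generally.

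Two smaller points. First, in the $|G/A|=2$ case you exclude $|Z(G)|=1$ but not $|Z(G)|=3$ (allowed a priori by Lemma \ref{lm-e0}(5)); you need the observation that a $2$-element $t\notin A$ acting on the nontrivial Sylow $2$-subgroup of $A$ has nontrivial fixed points, so $2$ divides $|C_A(t)|=|Z(G)|\leqslant 3$. Second, your $|G/A|=6$ case cannot simply ``specialise'' the final paragraph of the proof of Lemma \ref{lm-e1}: that argument repeatedly uses the bound $|C_A(x)|\leqslant 2$ and the prohibition of elements of order $6$ outside $A$, both of which come from $n=2$; with $n=3$ you only get $|C_A(x)|\leqslant 3$ and $o(x)\leqslant 6$, and the contradictions must be rerouted (e.g.\ through Lemma \ref{lm-e0}(2), which gives $C_A(b)=1$ for $b$ of order $3$ outside $A$). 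The paper instead runs this case through $C_G(T_0)=A$ for a subgroup $T_0\leqslant T$ of order $4$ and the embedding $G/A\hookrightarrow{\rm Aut}(T_0)$, which forces $T_0$ to be a Klein four-group, $G/A\cong S_3$, and then $G\cong S_4$. Your list $\{8,12,24\}$ of residual cases should also be replaced by ``$2$-part of $|G/A|$ at least $8$,'' since nothing a priori bounds $|G/A|$ by $24$; the elimination you sketch (an abelian subgroup of order $8$ in a Sylow $2$-subgroup of order $\geqslant 16$, plus the ${\rm Aut}(T_0)$ bound when that subgroup lies in $A$) does cover all of these.
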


\begin{proof}
We first suppose that $G$ has a strict $3$-split decomposition with respect to $A$.  Suppose $b$ is an element of order $3$ in $G\setminus A$.  By Lemma \ref{3-split prel} (3), $3$ does not divide $|A|$.  Moreover, Lemma \ref{lm-e0} (2) implies that $(o (b) - 1) |C_A(b)| \leqslant 3$.  Since $o(b) - 1=2$, we conclude that $|C_A (b)| = 1$.  This implies that $A \langle b \rangle$ is a Frobenius group.  If $|G:A|$ is odd, then by Lemma \ref{3-split prel} (3), we have $G = A \langle b \rangle$ and conclusion (1) holds.

Let $|G/A|$ be even and $tA$ is an involution in $G/A$ where $t$ is an $2$-element.  Observe that $C_A (t) \langle t \rangle$ is an abelian group.  By Lemma \ref{lm-e0} (1), we have that $|C_A (t)| \leqslant 3$.  Since $|A|$ is even, we see that $C_A (t)$ is not trivial. Thus $|C_A(t)| = 2$.  If $G = A \langle t \rangle$, then $A$ will have index $2$ and $Z (G) = C_A (t)$ will have order $2$.  Since the decomposition is strict, we see that $G \setminus A$ must have three subsets each having size at least $2$, so $|A| = |G \setminus A| \geqslant 6$.  This gives conclusion (2).

We now assume that $|G/A| \geqslant 4$.  Let $T$ be the Sylow $2$-subgroup of $A$.  By Lemma \ref{index2}, we know that $\langle T, t\rangle$ either has order $4$ or is isomorphic to a dihedral, semidihedral, quaternion, or generalized quaternion group. It follows that $T$ is cyclic or $|T| = 4$.

Suppose that $|T| > 2$.  Let $S$ be a Sylow $2$-subgroup of $G$ and observe that $S \cap A = T$.  If $C_S(T)>T$, then there exists $s \in C_S (T) \setminus T$.  Observe that $T \langle s \rangle$ is abelian and not contained in $A$.  Since $|T| > 3$, this would violate Lemma \ref{lm-e0} (1).  Thus, we have $C_S (T) = T$.  Let $T_0$ be a subgroup of order $4$ in $T$. If $C_G(T_0) \neq A$, then for $b \in C_G (T_0) \backslash A$, $T_0\langle b\rangle$ is abelian so it would also violate Lemma \ref{lm-e0} (1).  Thus, $C_G (T_0)=A$.  We know that $G/A$ is isomorphic to a subgroup of the automorphism group of $T_0$.  Since $|G:A| \geqslant 4$, we must have that $T_0$ is a Klein $4$-group and $G/A \cong S_3$.  Observe that $G/A$ acts fixed-point-freely on the Hall $2$-complement $N$ of $A$.  Since $S_3$ is not a Frobenius complement, we have $N = 1$.  Notice that $T_0$ being not cyclic implies that $T = T_0$, and so, $G\cong S_4$ where $A$ is the Klein $4$-subgroup, and $(4)$ holds.

Now, let $|T|=2$. Then $T\leqslant Z(G)$ and in light of the first paragraph, we see that $G/A$ is a $2$-group. If $G/A$ contains an element of order $4$, then $G$ contains an abelian subgroup of order $8$ that is not contained in $A$ and this violates Lemma \ref{lm-e0} (1).  Thus $G/A$  is elementary abelian.  Letting $N$ be the Hall 2-complement of $A$, we see that $G/A$ acts fixed-point-freely on $N$; so $N \neq 1$ would imply that $|G/A|=2$ and conclusion (2) holds.  Thus, we may assume that $N=1$; so $A = T = Z(G)$.  This implies that $G$ is a $2$-group.  We know that if $G$ has order greater than $8$, then $G$ has an abelian subgroup of order at least $8$ and this violates Lemma \ref{lm-e0} (2).  We conclude that $G$ is nonabelian of order $8$ and conclusion (3) holds.

Conversely, notice that each of the groups mentioned has a strict $2$-split decomposition, and so, we obtain a strict $3$-split decomposition by appealing to Lemma \ref{2-3-split}.  The lemma is proved.
\end{proof}

\begin{lm} \label{3-split-nonnormal}
Let  $G$ be a nonabelian group and let $A$ be a nonnormal abelian subgroup of $G$.  Then $G$ has a strict $3$-split decomposition with respect to $A$ if and only if either $G \cong A_4$ and $A$ is a subgroup of order $2$ or $3$ or $G\cong S_4$ and $A$ is a subgroup of order $2$, $3$, or $4$.
\end{lm}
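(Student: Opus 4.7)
The plan has two halves. First, I use the nonnormality of $A$ together with Lemmas \ref{lm-e0} and \ref{lm-e0-d} to force $|G|$ to divide $24$, and then eliminate candidates. Second, I exhibit strict $3$-split decompositions for each of the listed groups.

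For necessity, the key step is to exploit the conjugate $A^g \ne A$ guaranteed by nonnormality: since $|A^g| = |A|$, we have $A^g \nsubseteq A$, and Lemma \ref{lm-e0}(1) with $n = 3$ and $U = A^g$ gives $|A| \leqslant 6$ and more generally bounds every abelian subgroup of $G$ by $6$. Lemma \ref{lm-e0}(3) and Lagrange then imply every element of $G$ has order at most $6$. By Lemma \ref{lm-e0-d}(1), any Sylow subgroup for $p \geqslant 5$ lies in $A$, so such a prime divides $|A| \leqslant 6$; the only option is $p = 5$ with $|A| = 5$, but any second Sylow $5$-subgroup $P$ would give $|P \cap A| = 1$ and $|P : P \cap A| - 1 = 4 > 3$, contradicting Lemma \ref{lm-e0}(1) and forcing $A$ to be the unique normal Sylow $5$-subgroup --- a contradiction. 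So $|G|$ is a $\{2, 3\}$-group, and Lemma \ref{lm-e0-d}(2) bounds the Sylow $3$-subgroup by $3$. Finally, every group of order $16$ contains an abelian subgroup of order $8 > 6$, so the Sylow $2$-subgroup has order at most $8$. Therefore $|G|$ divides $24$.

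I then eliminate each nonabelian group of order dividing $24$. Those with an element of order exceeding $6$ (namely $D_{24}$, the dicyclic group of order $24$, $C_3 \rtimes C_8$, and $C_3 \rtimes Q_8$ with the nontrivial action on $C_3$) are ruled out by Lemma \ref{lm-e0}(3). Those with $|Z(G)| \geqslant 4$ (namely $Z_3 \times D_8$, $Z_3 \times Q_8$, $C_4 \times S_3$, $C_2 \times Q_{12}$, and $V \times S_3$) contradict Lemma \ref{lm-e0}(5). The group $S_3$ is excluded by the count $|G \setminus A| \leqslant 4 < 6$. In $D_8$ and $Q_8$ every subgroup containing $Z(G)$ is normal, so no valid $A$ exists. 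For each of the remaining candidates ($D_{12}$, $Q_{12}$, $SL_2(3)$, $A_4 \times Z_2$, and the semidirect product $C_3 \rtimes D_8$ of order $24$), every candidate nonnormal abelian $A \supseteq Z(G)$ admits an element $b \in G \setminus A$ of order $3$ (from a Sylow $3$-subgroup disjoint from $A$, or from the unique normal Sylow $3$-subgroup when $3 \nmid |A|$) with $C_A(b) \supseteq Z(G)$ of order $\geqslant 2$, giving $(o(b) - 1)|C_A(b)| \geqslant 4 > 3$, contradicting Lemma \ref{lm-e0}(2). The only survivors are $A_4$ and $S_4$, whose nonnormal abelian subgroups have precisely the orders listed.

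For the converse, the case $G \cong A_4$ with $|A| = 2$ follows immediately from Lemma \ref{2-split-nonn} together with Lemma \ref{2-3-split} (since this case is not among the listed exceptions). The remaining four cases --- $A_4$ with $|A| = 3$ and $S_4$ with $|A| \in \{2, 3, 4\}$ --- do not upgrade from a strict $2$-split decomposition (Lemma \ref{2-split-nonn} supplies none for them) and must be handled by explicit construction; this is the main technical obstacle. My plan is to apply Lemma \ref{lm-eeee0} to partition each coset $Ag$ (for $g \notin A$) into $|C_A(g)|$ noncommuting pieces of size $|A : C_A(g)|$, and then merge pieces across different cosets using $G$-conjugacy so that exactly three noncommuting sets result. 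For example, for $A_4$ with $A = \langle (123) \rangle$, the three involutions of the normal Klein $4$-subgroup pairwise commute and must therefore lie in distinct $B_i$; each $B_i$ can then be completed with two $3$-cycles drawn from two distinct Sylow $3$-subgroups different from $A$, yielding three noncommuting sets of size $3$. The $S_4$ cases proceed in a similar fashion using the richer conjugacy-class and Sylow structure of $S_4$.
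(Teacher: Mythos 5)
Your necessity argument is correct, and it reaches the paper's conclusion by a genuinely different route. The paper, after bounding $|A|\leqslant 6$, all abelian subgroups by $6$, the Sylow $3$-subgroup by $3$, and the Sylow $2$-subgroup by $8$, avoids enumerating groups: it shows a Sylow $3$-subgroup $Q$ cannot be normal (a central subgroup $Z = Z(P)\cap C_P(Q)$ is forced inside $A$ and the abelian subgroup $ZQ$ then violates Lemma \ref{lm-e0}(1)), and then uses the faithful action of $G$ on the four conjugates of $Q$ to embed $G$ into $S_4$, giving $G\cong A_4$ or $S_4$ at once. Your brute-force elimination of the nonabelian groups of order dividing $24$ also works: the element-order, center-order, and $(o(b)-1)|C_A(b)|$ criteria are all applied correctly, and in the surviving order-$24$ candidates the inequality $2|Z(G)|\geqslant 4>3$ against Lemma \ref{lm-e0}(2) is legitimate because $Z(G)\leqslant A$ forces $Z(G)\leqslant C_A(b)$ and a Sylow $3$-subgroup avoiding $A$ always exists. (Note that ``the dicyclic group of order $24$'' and ``$C_3\rtimes Q_8$ with nontrivial action'' are the same group, so your list carries a harmless duplication.) The trade-off is that your version rests on having correctly listed all eighteen nonabelian groups of order $6$, $8$, $12$, or $24$, whereas the paper's structural step makes that unnecessary.

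The genuine gap is in the converse. The case $G\cong A_4$ with $|A|=2$ is correctly dispatched via Lemmas \ref{2-split-nonn} and \ref{2-3-split}, and your sketch for $A_4$ with $A$ a Sylow $3$-subgroup is essentially complete and coincides with the paper's decomposition (three triples, each one involution plus two $3$-cycles from distinct Sylow $3$-subgroups other than $A$). But for $S_4$ you offer only a plan, and the plan's crucial step --- ``merge pieces across different cosets using $G$-conjugacy so that exactly three noncommuting sets result'' --- is exactly where all the content lies and is not automatic: merging two noncommuting pieces from different cosets requires verifying that no element of one commutes with any element of the other, and the working decompositions are not coset-respecting (for $A=\langle(1234)\rangle$ the paper's three sets have sizes $6$, $7$, $7$ and each mixes transpositions, double transpositions, $3$-cycles, and $4$-cycles). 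There are five conjugacy classes of admissible $A$ in $S_4$ --- generated by a transposition, by a double transposition, by a $3$-cycle, by a $4$-cycle, and a nonnormal Klein four-subgroup --- and the paper writes out an explicit strict $3$-split decomposition for each. Until you exhibit these five decompositions, or replace them with a genuine existence argument, the ``if'' direction of the lemma remains unproved for $G\cong S_4$.
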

\begin{proof}
Suppose that $G$ has a strict $3$-split decomposition with respect to $A$.  Let $A_1 = A^g \neq A$. Then $|A_1 \cap A|\cdot (|A:A_1 \cap A|-1)\leqslant 3$ by Lemma \ref{lm-e0} (1), and hence one of the following holds: (1) $|A_1 \cap A| = 1$ and $|A:(A_1 \cap A)| \leqslant 4$, (2) $|A_1 \cap A| = 2$ and $|A:A_1 \cap A| \leqslant 2$, or (3) $|A_1 \cap A| = 3$ and $|A : A_1 \cap A| = 2$.  Notice that in case (1) we have $|A| = 2$, $3$, or $4$, in case (2), we have $|A|$ is either $2$ or $4$, and in case (3), we have $|A| = 6$.  If $x$ is any element of $G$ outside $A$, then $o(x) \leqslant 6$ by Lemma \ref{lm-e0} (3).  Note that if $o(x) = 5$, then $\langle x \rangle \cap A = 1$, and applying Lemma \ref{lm-e0} (2) we obtain $o(x) \leqslant 4$ which is a contradiction.  Thus, every nonidentity element of $G$ has order $2$, $3$, $4$, or $6$.  Thus, $|G| = 2^a \cdot 3^b$ for nonnegative integers $a, b$, and hence $G$ is solvable.

By Lemma \ref{lm-e0} (1), we see that any abelian subgroup of $G$ has order at most $6$.  Since $G$ is nonabelian, we see that if $G$ were a $3$-group, then $|G| \geqslant 27$ and $G$ would have an abelian subgroup of order $9$ which is a contradiction.  If $G$ is a $2$-group of order at least $16$, then $G$ would have an abelian subgroup of $8$ which is a contradiction.  This would force $G$ to have order $8$.  By Lemma \ref{lm-e0} (5), we see that $G' = Z(G) \leqslant A$ which implies $A$ is normal, a contradiction.  We conclude that $G$ is neither a $2$-group nor a $3$-group.

Let $Q$ be a Sylow $3$-group of $G$.  We have just shown that $Q$ is nontrivial.  On the other hand, if $|Q| \geqslant 9$, then $G$ will have an abelian subgroup of order $9$, and we have seen that this is a contradiction.  Thus, we have that $|Q| = 3$.  Similarly, if $P$ is a Sylow $2$-subgroup of $G$, then if $|P| \geqslant 16$, then $G$ will have an abelian subgroup of $8$ which is not allowed.  Thus, we have that $|P| \leqslant 8$.  We see that the possibilities for $|G|$ are $6$, $12$, and $24$.  Since $G$ has a strict $3$-split decomposition, there must be at least $6$ elements of $G$ outside of $A$.  This rules out $|G| = 6$.

Notice that if $Q$ is normal in $G$, then $|G:Q C_P (Q)| \leqslant 2$ since $G/Q C_P (Q)$ is isomorphic to a subgroup of ${\rm Aut} (Q) \cong Z_2$.  Since $C_P (Q) > 1$, we have that $Z = Z(P) \cap C_P (Q) > 1$.  Notice that both $P$ and $Q$ will centralize $Z$, so $Z$ is normal in $G$.  By Lemma \ref{lm-e0} (5), we know that $Z \leqslant A$, and since $A$ is not normal, we have $Z < A$.  From the available orders for $A$, this implies that either $A = ZQ$ or $|A| = 4$.  Since $A$ is not normal, we have $A \ne ZQ$, so $|A| = 4$.  Also, $ZQ$ is an abelian subgroup of $G$ and $ZQ \cap A = Z$.  This implies that $(|ZQ:ZQ \cap A| - 1)|ZQ \cap A| = (3-1)2 = 4$ which violates Lemma \ref{lm-e0} (1).  Thus, the case $Q$ is normal cannot occur.

Now, we have that $Q$ is not normal.  Since the number of Sylow $3$-subgroups in $G$ is congruent to $1$ mod $3$, $Q$ must have $4$ conjugates in $G$.  Let $K$ be the kernel of the action of $G$ on the Sylow $3$-subgroups of $G$.  Observe that $G/K$ has order at least $12$ and is isomorphic to a subgroup of $S_4$.  If $K > 1$, then since $|G| \leqslant 24$, we must have $|K| = 2$ and $|G:K| = 12$.  Notice that $K$ will be central in $G$, so $K \leqslant A$.  Notice that $KQ$ is an abelian subgroup.  Replacing $Q$ by a conjugate if necessary, we may assume that $QK \cap A = K$.  We have $(|QK:QK \cap A| - 1)|QK \cap A| = (3-1)2 = 4$ which violates Lemma \ref{lm-e0} (1).  Thus, $K = 1$.
This implies that $G$ is either $A_4$ or $S_4$.  If $G$ is $A_4$, then $A$ can have order $2$ or $3$.  When $G$ is $S_4$, then $A$ can have order $2$, $3$, or $4$.

Note that the $2$-split decompositions for $A_4$ when $A$ is a subgroup of order $2$ in the Klein $4$-subgroup that appeared in Lemma \ref{2-split-nonn} yields strict $3$-split decomposition. The following gives a strict $3$-split decomposition for $A_4$ when $A$ is a Sylow $3$-subgroup:
$$\begin{array}{lcl}
A & = & \{1, (123), (132)\};\\[0.1cm]
B_1 & = & \{(12)(34), (124), (234)\},\\[0.1cm]
B_2 & = & \{(13)(24), (142), (143)\},\\[0.1cm]
B_3 & = & \{(14)(23), (134), (243)\}.\\[0.1cm]
\end{array}$$

Now, we present $3$-split decompositions for $S_4$ with respect to various possible $A$'s.  We present an example for one representative of each of the conjugacy classes of the possible $A$'s.

$$\begin{array}{lcl}
A & = & \{ (1), (12)(34)\}; \\[0.1cm]
B_1 & = & \{ (13)(24), (123), (134), (234), (1243), (1324), (12), (14)\}, \\[0.1cm]
B_2 & = & \{ (14)(23), (132), (142), (143), (1432), (1423), (34), (24)\},\\[0.1cm]
B_3 & = & \{ (1234), (1342), (124), (243), (13), (23) \}.\\[0.1cm]
\hline
A & = & \{ (1), (24)\}; \\[0.1cm]
B_1 & = & \{ (13)(24),(123),(134),(234),(1243),(1324),(12),(14)\}, \\[0.1cm]
B_2 & = & \{ (14)(23), (132), (142), (143), (1432),  (1423), (34)\},\\[0.1cm]
B_3 & = & \{ (12)(34), (1234), (1342),(124),(243), (13),(23) \}.\\[0.1cm]
\hline
A & = & \{ (1), (123), (132)\}; \\[0.1cm]
B_1 & = & \{ (13)(24), (134), (234), (1243), (1324), (12), (14)\}, \\[0.1cm]
B_2 & = & \{ (14)(23), (142), (143), (1432), (1423), (34), (24)\},\\[0.1cm]
B_3 & = & \{ (12)(34), (1234), (1342), (124), (243), (13), (23) \}.\\[0.1cm]
\hline
A & = & \{ (1), (12), (34), (12)(34)\}; \\[0.1cm]
B_1 & = & \{ (13)(24),(123), (134), (234), (1243), (1324), (14) \}, \\[0.1cm]
B_2 & = & \{ (14)(23), (132), (142), (143), (1432), (1423), (24) \},\\[0.1cm]
B_3 & = & \{ (1234), (1342), (124), (243), (13), (23) \}.\\[0.1cm]
\hline
A & = & \{1,  (1234), (13)(24), (1432)\}; \\[0.1cm]
B_1 & = & \{(13), (23), (123), (124), (1243), (1324)\},\\[0.1cm]
B_2 & = & \{(34), (14), (12)(34), (132), (243), (143), (1342)\},\\[0.1cm]
B_3 & = & \{(12), (24), (14)(23), (134), (234), (142), (1423)\}.\\[0.1cm]
\end{array}$$
The lemma is proved.
\end{proof}

\section{Decompositions of $L_2 (q)$, ${\rm Sz} (q)$, and ${\rm PGL}_2 (q)$}
Let $G$ be a nonabelian group and $\min (G)$ the minimal number $n$ for which the group $G$ has a strict $n$-split decomposition for some abelian group $A$.  Clearly, if $H$ and $K$ are two maximal abelian subgroups of $G$ with $H \cap K = 1$, then $\min(G) \geqslant \min \{ |H|, |K|\}-1$. In particular, if $G$ has a maximal abelian subgroup $A$ which intersects trivially with some of its conjugates, then $\min (G) \geqslant |A| - 1$. We now investigate $\min (G)$ for two families of groups.

In what follows, we restrict our attention to the almost simple groups (recall that $G$ is almost simple if $S \leqslant G \leqslant {\rm Aut}(S)$ for some nonabelian simple group $S$).  We begin with the following result:

\begin{lm} \label{l7-1}
Suppose that $q = p^m \geqslant 4$, with $p$ a prime and $m \geqslant 1$ an integer. Then the following assertions hold.
\begin{itemize}
\item[$(1)$] If $p=2$ and $G = L_2 (q) \cong {\rm PGL}(2,q)$, then $\min (G) = q$.
\item[$(2)$] When $p$ is an odd prime, then $\min (G) = q - 1$ if $G = L_2(q)$, and  $\min (G) = q$, if $G = {\rm PGL}(2, q)$.
\end{itemize}
In particular, we have the following:
\begin{itemize}
\item If $S \cong A_5 \cong L_2(4) \cong L_2(5)$,  then $\min (S) = 4$.
\item If $S \cong L_3 (2) \cong L_2(7)$, then $\min (S) = 6$.
\item If $S \cong A_6 \cong L_2(9)$, then $\min (S) = 8$.
\item If $S \cong S_5 \cong {\rm PGL}_2(5)$, then $\min (S) = 5$.
\end{itemize}
\end{lm}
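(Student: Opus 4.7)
My plan is to prove the three main equalities for $\min(G)$ by matching a lower bound and an upper bound in each case, after which the four bulleted consequences follow from the standard isomorphisms $A_5 \cong L_2(4) \cong L_2(5)$, $L_3(2) \cong L_2(7)$, $A_6 \cong L_2(9)$, and $S_5 \cong \mathrm{PGL}_2(5)$.

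For the lower bounds I would use the observation preceding the lemma together with standard facts about $L_2(q)$ and $\mathrm{PGL}_2(q)$: the Sylow $p$-subgroups (elementary abelian of order $q$) and the non-split tori are maximal abelian subgroups, and any two distinct conjugates of each of these types intersect trivially. Given a strict $n$-split decomposition with abelian subgroup $A$, I would locate a maximal abelian $M'$, of the largest available type, disjoint from $A$; then Lemma \ref{lm-e0}(1) yields $n \geq |M'| - 1$. Choosing $M'$ to be a Sylow $p$-subgroup of order $q$ matches the claim $n \geq q-1$ for $L_2(q)$ with $q$ odd; choosing $M'$ to be a non-split torus of order $q+1$ matches the claim $n \geq q$ for $\mathrm{PGL}_2(q)$ with $q$ odd and for $L_2(q)$ with $q$ even. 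Existence of a suitable $M'$ is straightforward, since there are sufficiently many conjugates of the relevant type while $A$ can only ``block'' a few of them through its (at most three) involutions.

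For the matching upper bounds I would construct explicit strict split decompositions. In $L_2(q)$ with $q$ odd, take $A = U$ a Sylow $p$-subgroup: the Borel $B = U \rtimes T_s$ with $|T_s| = (q-1)/2$ is Frobenius, so by Lemma \ref{Frobenius} the $(q-3)/2$ nonidentity cosets of $U$ in $B$ are each noncommuting sets of size $q$; the $q(q-1)/2$ cosets of $U$ in $G \setminus B$ are also each noncommuting of size $q$ by the TI property of $U$, and using the $T_s$-action together with the Bruhat decomposition $G = B \sqcup BwB$ I would merge these into $(q+1)/2$ larger noncommuting sets, for a total of $q-1$. In $\mathrm{PGL}_2(q)$ with $q$ odd and in $L_2(q)$ with $q$ even, take $A = T_n$ a non-split torus of order $q+1$: the single nontrivial coset of $T_n$ in the dihedral $N_G(T_n)$ consists of $q+1$ pairwise noncommuting involutions, forming one noncommuting set; the remaining cosets of $T_n$ I would organize, via the natural action of $G$ on $\mathbb{P}^1(\mathbb{F}_q)$, into $q-1$ additional noncommuting sets, for a total of $q$.

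The main obstacle will be the merging step in the upper bound constructions, since combining several cosets $Ag_1, \dots, Ag_k$ into a single noncommuting set requires verifying that $u_1 g_i$ and $u_2 g_j$ fail to commute for all $i \neq j$ and all $u_1, u_2 \in A$. I would handle this via a double-coset analysis ($U \backslash G / U$ or $T_n \backslash G / T_n$) combined with explicit centralizer computations in $\mathrm{GL}_2(q)$, presenting the case $L_2(q)$ with $q$ even first---it is cleaner because there is no $\{\pm I\}$-quotient to track---and adapting the argument to the other two cases.
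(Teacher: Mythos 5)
Your lower bound is the same as the paper's: pick a conjugate of the largest of the TI abelian subgroups (a Sylow $p$-subgroup or a nonsplit torus) meeting $A$ trivially and apply Lemma \ref{lm-e0}(1); that part is fine. The gap is in the upper-bound constructions, and in the $L_2(q)$, $q$ odd, case it is not merely an unverified merging step but an arithmetic impossibility. With $A=U$ a Sylow $p$-subgroup, the $q$ remaining Sylow $p$-subgroups are abelian and pairwise intersect trivially, so each noncommuting set $B_i$ contains at most one element from each of them, hence at most $q$ of the $q(q-1)$ nontrivial $p$-elements outside $U$. In a $(q-1)$-split decomposition with $A=U$, every single $B_i$ must therefore contain exactly one element of every Sylow subgroup other than $U$; in particular no $B_i$ can lie inside the Borel $B$, since $B$ is Frobenius over $U$ and every element of $B\setminus U$ has order prime to $p$. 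Your plan reserves $(q-3)/2$ of the sets to be cosets of $U$ inside $B$, leaving only $(q+1)/2$ sets to absorb $q(q-1)$ $p$-elements at $q$ apiece, a capacity of $q(q+1)/2<q(q-1)$ for $q\geqslant 5$. A second concrete error occurs in the torus case for ${\rm PGL}_2(q)$, $q$ odd: $N_G(T_n)$ is dihedral of order $2(q+1)$ with $q+1$ even, so the $q+1$ involutions of $N_G(T_n)\setminus T_n$ are \emph{not} pairwise noncommuting --- each reflection $r$ commutes with $rz$, where $z$ is the involution of $T_n$ (equivalently, involutions of ${\rm PGL}_2(q)$ have dihedral, not abelian, centralizers). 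That claim is correct only for $q$ even, where $|T_n|=q+1$ is odd.

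The paper avoids both problems by organizing the $B_i$ by conjugacy classes rather than by cosets of $A$. It takes $A=F$, the torus of order $(q+1)/k$, and sets $B_i=R_i\cup S_i\cup T_i$, where $R_i$, $S_i$, $T_i$ each consist of exactly one conjugate of a fixed nonidentity element in each conjugate of $C$, $D$, $F$ respectively. Because the conjugates of $C$, $D$, $F$ partition $G\setminus\{1\}$ and each is the centralizer of its nonidentity elements, every such cross-section is automatically a noncommuting set and automatically meets every Sylow subgroup exactly once --- precisely the tightness that a coset-merging argument would have to engineer by hand (the counting above shows the fit is exact, so every merged set must be within one element of a maximum independent set of the commuting graph). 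If you want to keep a coset-based construction, you must at least interleave the cosets inside the Borel with cosets outside it; as written, the proposal cannot be repaired by only fixing the merging step.
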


\begin{proof}
It is known that $G$ contains abelian subgroups $C$, $D$, $F$, of orders $(q-1)/k$, $q$, and $(q+1)/k$, respectively where $k = 1$ if either $p = 2$ or $G = {\rm PGL} (2,q)$ when $p$ is odd and $k = 2$ if $G = L_2(q)$ when $p$ is odd; and every two distinct conjugates of any of these groups intersect trivially.  Furthermore, every element of $G$ is a conjugate of an element in $C \cup D\cup F$  (see Satz II.8.5 of \cite {hup}).  Label the elements $C = \{ 1, c_1, \ldots, c_{(q-1)/k - 1}\}$, $D = \{ 1, d_1, \ldots, d_{q-1}\}$, and $F = \{ 1, f_1, \ldots, f_{(q+1)/k - 1}\}$.  Let $\{a_1, \dots, a_r \}$ be a transversal for $N_C = N_G (C)$ in $G$, let $\{b_1, \dots, b_s \}$ be a transversal for $N_D = N_G (D)$ in $G$, and let $\{ g_1, \dots, g_t \}$ be a transversal for $N_F = N_G (F)$ in $G$.  Note that each of $C$, $D$, and $F$ are the centralizers for the nonidentity element they contain.  So the only way two nonidentity elements of $G$ can commute is if they lie in the same conjugate of one of these three subgroups.
Now, we can define the sets
\begin{itemize}
\item[] $R_i = \{c_i^{a_1}, \ldots, c_i^{a_r}\}$, $i=1, 2, \ldots, (q-1)/k - 1$,
\item[] $S_j = \{d_j^{b_1}, \ldots, d_j^{b_s}\}$, $j=1, 2, \ldots, q-1$,
\item[] $T_k = \{f_k^{g_1}, \ldots, f_k^{g_t}\}$, $k=1, 2, \ldots, (q+1)/k - 1$.
\end{itemize}
We take $B_i = R_i \cup S_i \cup T_i$ when $1 \leqslant i\leqslant (q-1)/k - 1$.  Note that $(q-1)/k = (q+1)/k - 1$ for both choices of $k$.  We take $B_{(q-1)/k} = S_{(q-1)/k} \cup T_{(q-1)/k}$.  Finally, we take  $B_j = T_j$ where $j = q$ if $p$ is even and $(q+1)/k + 1 \leqslant j \leqslant q-1$ when $p$ is odd.  Set $A = F$.  Then $G = A \uplus B_1 \uplus \cdots \uplus B_q$ is a strict $q$-split decomposition of $G$ when $p$ is $2$ or when $G = {\rm PGL}_2 (q)$ when $p$ is odd and $G = A \uplus B_1 \uplus \cdots \uplus B_{q-1}$ is a strict $(q-1)$-split decomposition when $G = L_2(q)$ when $p$ is odd.

Suppose now that $G$ has a strict $u$-split decomposition: $G = A' \uplus B_1' \uplus \cdots \uplus B_u'$.  Then we can assume that $A' \cap F = 1$ and hence $u + 1 \geqslant |F:A' \cap F| = |F| =  (q + 1)/k$.  When $p$ is even or when $G = {\rm PGL}_2 (q)$, we conclude that $u \geqslant q$, and so, $\min (G) = q$.  In a similar fashion, we may assume that $A' \cap D = 1$, and hence, $u + 1 \geqslant |D| = q$.  We deduce that $u \geqslant q - 1$. Therefore,  we obtain $\min (G) = q-1$ when $G = L_2(q)$ and $p$ is odd.
\end{proof}

\begin{lm} \label{l7-2}
Let $S$ be a simple group with $\min (S) = n$. Then $|S| \leqslant (n+1)^{n+1}$.  In particular, $A_5 \cong L_2(4)\cong L_2(5)$ is the only  simple group $S$ with $\min (S) \leqslant 4$.
\end{lm}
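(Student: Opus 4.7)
The plan is to bound $|A|$ and $|S:A|$ separately and multiply. Let $S = A \uplus B_1 \uplus \cdots \uplus B_n$ be a strict $n$-split decomposition achieving $n = \min(S)$. Since $S$ is nonabelian simple, $A$ is a proper abelian subgroup of $S$.

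To bound $|A|$: if $A = 1$ there is nothing to do. Otherwise, since $1 \ne A \ne S$ and $S$ has no nontrivial proper normal subgroups, $A$ is not normal, so there is some $g \in S$ with $A^g \ne A$; because $|A^g| = |A|$, this forces $A^g \not\subseteq A$. Applying Lemma \ref{lm-e0}(1) to the abelian subgroup $A^g$ yields $|A \cap A^g| \cdot (|A : A \cap A^g| - 1) \leq n$, which rearranges to $|A| \leq n + |A \cap A^g|$. Since the normal core $\bigcap_{g \in S} A^g$ is trivial in a simple group, one should be able to choose $g$ so that $|A \cap A^g|$ is as small as possible; the favorable case $|A \cap A^g| = 1$ gives $|A| \leq n+1$.

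To bound $|S:A|$, I would invoke Theorem \ref{three} in its explicit form $|S:A| \leq (n+1)^n$. Combining, $|S| = |A| \cdot |S:A| \leq (n+1) \cdot (n+1)^n = (n+1)^{n+1}$, as required. For the ``in particular'' statement, $\min(S) \leq 4$ forces $|S| \leq 5^5 = 3125$. The nonabelian finite simple groups of order at most $3125$ are $A_5$, $L_2(7)$, $A_6 \cong L_2(9)$, $L_2(8)$, $L_2(11)$, $L_2(13)$, and $L_2(17)$, with $\min$-values $4, 6, 8, 8, 10, 12, 16$ respectively by Lemma \ref{l7-1}; only $A_5$ satisfies $\min(S) \leq 4$.

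The main obstacle is extracting the explicit bound $|S:A| \leq (n+1)^n$ from Theorem \ref{three}, since the introduction only asserts the existence of some bounding function $f(n)$. I would either cite the sharper bound from elsewhere in the paper or argue directly: each coset of $A$ must occupy at least $|C_A(g)|$ of the $n$ noncommuting parts and contributes at most $|A : C_A(g)|$ elements to each such part; tracing these constraints through for all $|S:A|$ cosets together with $|A| \leq n+1$ should pin down $|S:A| \leq (n+1)^n$. A secondary subtlety is confirming that one can indeed find a conjugate with $A \cap A^g = 1$; if only $|A| \leq 2n$ can be established, a small adjustment in the constants is needed but the same two-step framework still applies.
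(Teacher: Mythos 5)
Your two-step plan, $|S| = |A|\cdot |S:A|$, is genuinely different from the paper's argument, and its second step has a gap that I do not see how to close. The paper does not bound $|A|$ and $|S:A|$ separately at all: it quotes Vdovin's theorem, which says that a finite simple group satisfies $|S|\leqslant |B|^{|B|}$ where $B$ is an abelian subgroup of \emph{maximal} order, and then quotes Zenkov's theorem to produce $x\in S$ with $A\cap B^x=1$, so that Lemma \ref{lm-e0}(1) applied to $B^x$ gives $|B|=|B^x:B^x\cap A|\leqslant n+1$, whence $|S|\leqslant (n+1)^{n+1}$. In your first step, the difficulty you flag yourself is real: triviality of the normal core $\bigcap_g A^g$ does not produce a single conjugate with $A\cap A^g=1$; that is precisely what Zenkov's theorem is for. (You do get $|A|\leqslant 2n$ unconditionally, since $A^g\not\leqslant A$ and Lemma \ref{lm-e0}(1) bounds any abelian subgroup not contained in $A$ by $2n$.) The fatal problem is the second step. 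Theorem \ref{three} is purely an existence statement, and the bound its proof actually delivers is of the order of $(n^2)!$, which is far larger than $(n+1)^n$; there is no ``explicit form $|S:A|\leqslant (n+1)^n$'' to invoke. Your sketched direct argument does not supply one either: knowing that a coset $Ag$ meets each $B_i$ in at most $|A:C_A(g)|$ elements and must meet at least $|C_A(g)|$ of the parts constrains how a single coset is distributed, but a single noncommuting set $B_i$ can stretch across arbitrarily many cosets (as it does in the decompositions of $L_2(q)$ and ${\rm Sz}(q)$ built in Section 7), so these local constraints do not bound the number of cosets. Bounding $|S:A|$ sharply in terms of $n$ is essentially the hard content of the lemma, and in the paper it is carried entirely by Vdovin's theorem.

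A smaller but concrete error: your list of nonabelian simple groups of order at most $5^5=3125$ omits $A_7$, of order $2520$. Lemma \ref{l7-1} does not cover $A_7$, so it needs a separate argument; the paper disposes of it by observing that $A_7$ has a maximal abelian subgroup of order $7$ that meets a conjugate trivially, forcing $\min(A_7)\geqslant 6$. The rest of your ``in particular'' computation (the $\min$ values of $L_2(q)$ for $q=4,7,8,9,11,13,17$ read off from Lemma \ref{l7-1}) agrees with the paper.
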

\begin{proof}
Let $S=A\uplus B_1 \uplus \cdots \uplus B_n$ be a strict $n$-split decomposition of $S$, where $n=\min (S)$.  From \cite{Vdovin} one can deduce that $|S| \leqslant |B|^{|B|}$ where $B$ is an abelian subgroup of maximal order. By  \cite{Zenkov}, there exists $x\in S$ such that $A\cap B^x=1$ and hence $|B|\leqslant n+1$. This gives the desired estimate.

Suppose that  ${\rm min}(S) \leqslant 4$.  By  the previous paragraph  $|S|\leqslant 5^5=3125$, and hence $S$ is isomorphic to $A_7$  or $L_2(q)$, $q=4, 8, 9, 11, 13, 17$. By Lemma \ref{l7-1} (1) and (2), $\min (L_2(4)) = 4$, while $\min(L_2(q)) \geqslant 8$, for  $q=8, 9, 11, 13, 17$. Finally, if $S\cong A_7$, then $S$ has a maximal abelian subgroup of order $7$, which intersects trivially with some of its conjugates, and hence $\min (S)\geqslant 6$.
This completes the proof.
\end{proof}

The Suzuki groups ${\rm Sz}(q)$, an infinite series of simple groups of Lie type, were defined in \cite{Suzuki2, Suzuki} as subgroups of the groups  $L_4(q)$ with $q = 2^{2n+1}$ elements and set $r= 2^{n+1}$.

By \cite[Theorem 7]{Suzuki}, the order of ${\rm Sz}(q)$ is $$|{\rm Sz}(q)| = q^2(q-1)(q^2+1)= q^2(q-1)(q+r+1)(q-r+1),$$  note that  these factors are mutually coprime.
We are now ready to find a strict $k$-split decomposition for the Suzuki groups.

\begin{lm} \label{l7-3}
If $G={\rm Sz}(q)$, where $q=2^{2n+1}\geqslant 8$, then ${\rm min}(G)=2q-1$.
\end{lm}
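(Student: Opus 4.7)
The plan is to establish both inequalities $\min(G)\geqslant 2q-1$ and $\min(G)\leqslant 2q-1$ for $G = {\rm Sz}(q)$, exploiting the detailed structure of the Suzuki groups. I rely on the following standard facts: each Sylow $2$-subgroup $Q$ has order $q^2$, center $Z(Q)$ of order $q$ consisting of the involutions of $Q$, and elements of $Q\setminus Z(Q)$ all of order $4$; the maximal abelian subgroups of $Q$ have order $2q$, all contain $Z(Q)$, and pairwise meet exactly in $Z(Q)$; the $G$-centralizer of an involution is $Q$, of an order-$4$ element is the order-$2q$ maximal abelian subgroup of $Q$ containing it, and of any nontrivial odd-order element is one of the cyclic tori $T_0, T_+, T_-$ of orders $q-1,\,q+r+1,\,q-r+1$; finally, distinct Sylow $2$-subgroups of $G$ meet trivially, as do distinct conjugate tori. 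In particular, every abelian subgroup of $G$ lies in a Sylow $2$-subgroup or in one of the cyclic tori.

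For the lower bound, suppose $G$ admits a strict $n$-split decomposition with respect to an abelian subgroup $A$. I produce an abelian subgroup $U$ of order $2q$ with $U\cap A = 1$: if $A$ lies in a torus, take $U$ of order $2q$ inside any Sylow $2$-subgroup, so $U\cap A = 1$ by coprimality of orders; if $A$ lies in a Sylow $Q_0$, take $U$ inside a different Sylow $Q_1\neq Q_0$, so $U\cap A\leqslant Q_1\cap Q_0 = 1$. Since $|U|>1$ we have $U\not\leqslant A$, so Lemma \ref{lm-e0} (1) yields $n\geqslant |U\cap A|(|U:U\cap A|-1) = 2q-1$.

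For the upper bound, I construct a strict $(2q-1)$-split decomposition explicitly. Take $A$ to be a maximal abelian subgroup of order $2q$ inside a fixed Sylow $Q_0$. The critical observation is that no element of $Q_0\setminus A$ commutes with any element of $G\setminus Q_0$: each order-$4$ element $y\in Q_0\setminus A$ has $G$-centralizer $Z(Q_0)\langle y\rangle$ contained in $Q_0$. Hence the commuting graph on $G\setminus A$ separates (edge-wise) into its induced subgraphs on $Q_0\setminus A$ and on $G\setminus Q_0$, which I color independently using a common palette of $2q-1$ colors. The piece on $Q_0\setminus A$ is a disjoint union of $q-2$ cliques of size $q$ --- the nontrivial $Z(Q_0)$-cosets of $Q_0$ other than the one comprising $A\setminus Z(Q_0)$ --- and a transversal coloring uses exactly $q$ colors and produces classes of size $q-2\geqslant 2$ since $q\geqslant 8$. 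For the piece on $G\setminus Q_0$, color each Sylow $Q_i\neq Q_0$ with the same $2q-1$ colors, assigning $q-1$ distinct colors to the $q-1$ involutions of $Z(Q_i)\setminus 1$ and $q$ further colors to the order-$4$ elements of each maximal abelian subgroup of $Q_i$ (one per element, reused across the $q-1$ maximal abelian subgroups of $Q_i$); color each conjugate of a cyclic torus using at most $q+r\leqslant 2q-1$ colors from the same palette, since its nontrivial elements form a single clique of size less than $2q$. Distinct Sylows and distinct torus conjugates are pairwise noncommuting across, so these assignments produce no color conflict. Identifying the $q$ colors of the $Q_0\setminus A$-coloring with $q$ of the colors of the $G\setminus Q_0$-coloring yields a proper $(2q-1)$-coloring of the commuting graph on $G\setminus A$, and every color class has at least two elements since the $G\setminus Q_0$-portion of each class alone is substantial.

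The main obstacle I anticipate is the bookkeeping for the upper bound: one must verify explicitly that the partial colorings on $Q_0\setminus A$, on each of the many Sylow subgroups, and on all torus conjugates can be assembled into a single coloring using exactly $2q-1$ colors with every color class of size at least two. The structural facts above reduce this to a counting exercise, but the explicit combinatorial construction --- especially handling the conjugacy classes of order-$4$ elements across multiple Sylows and interlacing the three types of torus conjugates with the shared $q$ colors --- requires care.
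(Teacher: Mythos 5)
Your proof is correct, and both bounds rest on the same structural facts the paper uses (the partition of ${\rm Sz}(q)$ into TI Sylow $2$-subgroups and three classes of cyclic tori, together with the centralizer orders $q^2$, $2q$, $q-1$, $q\pm r+1$). The lower bound is the argument the paper gives, made slightly more explicit by your case analysis on where $A$ can live. For the upper bound you build a genuinely different decomposition: the paper takes $A=1$ and forms each $B_i$ as a union of transversal-type sets, one from each conjugacy class of partition subgroups, with the count $2q-1$ coming from the $q-1$ ``central'' sets plus $q$ ``order-$4$'' sets per Sylow $2$-subgroup; you instead take $A$ to be a maximal abelian subgroup of order $2q$ inside a fixed Sylow $Q_0$, and then must additionally handle $Q_0\setminus A$, which your observation that $C_G(y)=Z(Q_0)\langle y\rangle\leqslant Q_0$ for order-$4$ elements reduces to colouring $q-2$ disjoint cliques of size $q$ with $q$ reusable colours. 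The two constructions are otherwise parallel (your $q-1$ involution colours plus $q$ order-$4$ colours per Sylow are exactly the paper's $F_i$'s, and your torus colourings are its $C_i,D_i,E_i$'s), so the main thing your variant buys is a decomposition realizing $\min(G)$ with a nontrivial --- indeed maximal --- abelian part $A$ of order $2q$, at the cost of the extra bookkeeping inside $Q_0$; the paper's choice $A=1$ avoids that step entirely.
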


\begin{proof}
By Lemma XI.11.6 of \cite{hupIII}, $G$ is partitioned by its Sylow $2$-subgroups and its cyclic subgroups of order $q-1$, $q-r+1$, and $q+r+1$.  Looking at the proof of Lemma XI.11.6, we see that the cyclic subgroups of order $q-1$, $q-r+1$, and $q+r+1$ are the centralizers of their nonidentity elements.  We obtain the sets $C_1, \dots, C_{q-2}$ so that each $C_i$ contains one nonidentity element from each of the cyclic subgroups of order $q-1$, the sets $D_1, \dots, D_{q-r}$ so that each $D_i$ contains one nonidentity element from each of the the cyclic subgroups of order $q-r+1$, and the sets $E_1, \dots, E_{q+r}$ so that each $E_i$ contains one nonidentity element from each of the the cyclic subgroups of order $q+r$.  Note that $r < q-1$, so $q+r < 2q -1$.

Let $P$ be a Sylow $2$-subgroup of $G$.  By Theorem VIII.7.9 of \cite{hupII} and Lemma XI.11.2 of \cite{hupIII}, $Z(P)$ is an elementary abelian $2$-group of order $q=2^{2n+1}$ and every element outside $Z(P)$ has order $4$.  Observe that $P$ is the centralizer in $G$ of all of the nontrivial elements of $Z(P)$.  Label elements in $Z(P) = \{ z_0 = 1, z_1, \dots, z_{q-1} \}$. If $x \in P \setminus Z(P)$, then $P_0 = \langle Z(P), x \rangle \leqslant C_G (x)$.  In the proof of Lemma XI.11.7 of \cite{hupIII}, we see that the elements of order $4$ in $G$ lie in two conjugacy classes.  It follows that $|C_G (x)| = 2 |Z(P)|$.  This implies that $C_G (x) = P_0$.  It follows that if $\{ x_1, \dots, x_q \}$ is a transversal for $Z(P)$ in $P$, then $C_G (x_i z_j) = \langle Z(P),x_i \rangle$ for $i = 1,\dots, q$ and $j = 1, \dots, q-1 $.  Let $\{ a_1, \dots, a_t \}$ for a transversal for $N_G (P)$ in $G$.  We define $F_i$ as follows. For $1 \leqslant i \leqslant q-1$, set $F_i = \{ z_i^{a_1}, \dots, z_i^{a_t} \}$.  For $q \leqslant i \leqslant 2q - 1$, we define $$F_i = \bigcup_{j=1}^q \left\{ (x_j z_{i-q})^{a_1}, \dots, (x_j z_{i-q})^{a_r}\right\}.$$
Finally, we define the $B_i$'s.  For $1 \leqslant i \leqslant q-r$, set $B_i = C_i \cup D_i \cup E_i \cup F_i$; for $q-r+1\leqslant i \leqslant q-2$, set $B_i = C_i \cup E_i \cup F_i$; for $q-1 \leqslant i \leqslant q+r$, set $B_i = E_i \cup F_i$; and for $q+r+1 \leqslant i \leqslant 2q-1$, set $B_i= F_i$.  Take $A = 1$.  Then $G = A \uplus B_1 \uplus \cdots \uplus B_{2q-1}$ is a strict $(2q-1)$-split decomposition of $G$.

Note that $G$ has a maximal abelian subgroup $A$ of order $2q$ and $A$  has a conjugate that it intersects trivially, so we know that $\min (G) \geqslant 2q - 1$.  This gives the conclusion that $\min (G) = 2q - 1$.
\end{proof}


\section{Some upper bounds}
Suppose $G$ has an $n$-split decomposition with respect to an abelian subgroup $A$.  In what follows, we  show that the index $|G:A|$  is bounded by some function of $n$.  Since for every positive integer $n$ we can find Frobenius groups with arbitrarily large abelian Frobenius kernels whose Frobenius complements are cyclic of order $n+1$, we can use Theorem \ref{Frobenius} to see that it is not possible to bound $|G|$, particularly $|A|$, in terms of $n$.  However, we now show that we can bound the index $|G:A|$ in terms of $n$.  We have not worked to obtain optimal bounds, and in fact, we are sure that the bounds  obtained are far from optimal.  We see from the first couple of paragraphs of the proof that when $A$ is not normal in $G$, then it is possible to bound $|G|$ in terms of $n$.

\begin{theorem}
There exists a positive integer valued function $f$ defined on the positive integers so that if $G$ has an $n$-split decomposition with respect an abelian subgroup $A$, then $|G:A|$ is bounded by $f (n)$.
\end{theorem}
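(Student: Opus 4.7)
My plan is to split by whether $A$ is a normal maximal abelian subgroup of $G$. If $A$ is not maximal abelian, then $A$ is properly contained in some abelian subgroup $U$; since $U\not\leqslant A$, Lemma \ref{lm-e0}(1) gives $|U|\leqslant 2n$, whence $|A|\leqslant 2n-1$. If $A$ is not normal in $G$, then for some $g\in G$ the conjugate $A^g\neq A$ is abelian of order $|A|$ and, being distinct from $A$ but of the same order, cannot be contained in $A$; Lemma \ref{lm-e0}(1) again gives $|A|\leqslant 2n$. Thus outside the normal maximal-abelian case, $|A|$ itself is bounded by $2n$.

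In this reduced situation I would actually bound $|G|$ by a function of $n$. Every abelian subgroup of $G$ has order at most $2n$ (inside $A$ trivially, outside by Lemma \ref{lm-e0}(1)), so every element has order at most $2n$ and the primes dividing $|G|$ are at most $2n$. For each such prime $p$, a Sylow $p$-subgroup $P$ of $G$ has all its abelian subgroups of order $\leqslant 2n$; a standard $p$-group bound then bounds $|P|$ in terms of $n$ (for fixed $p$, the order of a $p$-group whose abelian subgroups are of bounded order is itself bounded). Multiplying across the at most $2n$ admissible primes bounds $|G|$ and hence $|G:A|$.

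In the remaining case $A$ is normal and maximal abelian, so $C_G(A)=A$ and $G/A$ acts faithfully on $A$ with each non-identity element fixing at most $n$ elements (Lemma \ref{lm-e0}(3)). By Lemma \ref{lm-e0}(4) the exponent of $G/A$ is at most $n+1$, and by Lemma \ref{lm-e0-d} the primes dividing $|G:A|$ are at most $n+1$, so it suffices to bound each Sylow $p$-subgroup of $G/A$. For primes $p>\sqrt{2n}$, Lemma \ref{lm-e0-d} forces the Sylow $p$-subgroup of $G$ either to lie in $A$ or to have order exactly $p$, so the $p$-part of $|G:A|$ is at most $p\leqslant n+1$. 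For the small primes $p\leqslant\sqrt{2n}$, taking $P$ a Sylow $p$-subgroup of $G$: if $C_P(P\cap A)>P\cap A$, any $b\in C_P(P\cap A)\setminus(P\cap A)$ yields the abelian subgroup $\langle P\cap A,b\rangle$ outside $A$, forcing $|P\cap A|\leqslant 2n-1$ by Lemma \ref{lm-e0}(1); otherwise $P/(P\cap A)$ acts faithfully on $P\cap A$ with each non-identity element fixing at most $n$ points, and the exponent bound together with the faithful action should produce a bound on $|P/(P\cap A)|$.

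The main obstacle is this last step: turning a faithful action of a $p$-group of bounded exponent on a finite abelian $p$-group, where each non-identity element has bounded fixed-point set, into an order bound on the acting group. The subtlety is that abelian subgroups of $G/A$ need not lift to abelian subgroups of $G$, so Lemma \ref{lm-e0}(1) is not directly available for $G/A$. Instead one must argue inside $G$ that any large elementary abelian section of $P/(P\cap A)$ either yields a large abelian subgroup of $G$ outside $A$ or produces an element of $G\setminus A$ with excessive centralizer in $A$, each contradicting a clause of Lemma \ref{lm-e0}. Since the authors disclaim any attempt at optimality, a coarse combination of these estimates will suffice to assemble the required function $f(n)$.
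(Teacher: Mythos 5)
Your reduction steps largely match the paper's: the non-normal case (where some conjugate $A^g\not\leqslant A$ gives $|A|\leqslant 2n$, hence every abelian subgroup of $G$ has order at most $2n$ and $|G|$ itself is bounded), the bound on the primes dividing $|G:A|$, and the large-prime case via Lemma \ref{lm-e0-d} are all handled essentially as in the paper. But the crux of the theorem is exactly the step you flag as ``the main obstacle,'' and your proposal does not contain an argument for it. The assertion that a faithful action of the $p$-group $P/(P\cap A)$, of bounded exponent, on the abelian $p$-group $P\cap A$, with every nonidentity element having at most $n$ fixed points, forces $|P/(P\cap A)|$ to be bounded is precisely what needs proof; neither the exponent bound nor the fixed-point bound is shown to do the job, and your suggested strategy (``any large elementary abelian section either yields a large abelian subgroup of $G$ outside $A$ or an element with excessive centralizer'') is not substantiated --- as you yourself note, abelian sections of $P/(P\cap A)$ need not lift to abelian subgroups of $P$, so Lemma \ref{lm-e0}(1) gives no direct purchase. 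There is also a smaller lacuna in your first branch for small primes: when $C_P(P\cap A)>P\cap A$ you bound $|P\cap A|$, but what is needed is a bound on $|P:P\cap A|$; that branch is repairable (all abelian subgroups of $P$ are then bounded, so $|P|$ is bounded by the self-centralizing maximal abelian normal subgroup argument), but as written it stops short.

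The paper avoids the faithful-action difficulty entirely by a maximality trick. After reducing to the case where $G$ is a $p$-group with $p$ small, $A=C_G(A)$, and $A$ contains every abelian normal subgroup of $G$, it chooses $U\leqslant A$ maximal among subgroups normal in $G$ with $C_G(U)\not\leqslant A$. Any $g\in C_G(U)\setminus A$ makes $U\langle g\rangle$ an abelian subgroup not contained in $A$, so $|U|\leqslant n$ by Lemma \ref{lm-e0}(1). Enlarging $U$ to a normal $V\leqslant A$ with $|V:U|=p$ gives $|V|\leqslant n^2$ and, by the maximality of $U$, $C_G(V)=A$; hence $G/A$ embeds in ${\rm Aut}(V)$ and $|G:A|\leqslant (n^2)!$. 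This replacement of ``$G/A$ acts faithfully on all of $A$'' by ``$G/A$ acts faithfully on a normal subgroup of $A$ of order at most $n^2$'' is the missing idea; without it, or a proved substitute for the fixed-point lemma your outline relies on, the proof is not complete.
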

\begin{proof}
Notice that we have the result when $n=2$ and $3$.  Thus, we may assume that $n \geqslant 4$.  We begin by noting that it suffices to prove that the size of all the abelian subgroups of $G$ are bounded by a function of $n$. In fact, if all of the abelian subgroups of $G$ have order at most $m$, then $|G| \leqslant m!$, see Problem 1D.11 in \cite{isaacs}.

Suppose first that $A$ is not normal.  Let $U$ be an abelian subgroup of $G$ that is not contained in $A$.  We know that $|U| \leqslant 2n$ by Lemma \ref{lm-e0} (1).  Also, since $A$ is not normal, there is some conjugate of $A$ that does not contain $A$.  The above work shows that the size of the conjugate is bounded by $2n$, and so $|A|$ is bounded by $2n$.  Therefore, we conclude that the size of every abelian subgroup of $G$ is bounded by $2n$, and we see that $|G| \leqslant (2n)!$.  Since $|G:A| \leqslant |G|$, this gives the result

We now assume that $A$ is normal in $G$.
If $U$ is an abelian subgroup of $G$ that is not contained in  $A$, then $|U: U \cap A| \leqslant n + 1$ and $|U| \leqslant 2n$ by Lemma \ref{lm-e0}.   If $x$ lies in $G \setminus A$, then $o(Ax) \leqslant n + 1$ by Lemma \ref{lm-e0}.  Thus, the only primes that can divide $|G:A|$ must be less than or equal to $n + 1$.  In particular, the number of distinct prime divisors of $|G:A|$ is at most the number of primes less than or equal to $n + 1$ which is certainly bounded by $n$.

Let $P$ be a Sylow $p$-subgroup of $G$.  It suffices to show that $|PA:A|$ is bounded in terms of $n$.  Suppose $p$ does not divide $|A|$.  If $U$ is an abelian subgroup of $P$, then we have $U \cap A = 1$, so $|U| \leqslant n+1$.   Applying the observation in the first paragraph of the proof, we have $|P| \leqslant (n+1)!$.

Thus, we may assume that $p$ divides $|A|$.  We see that $P$ will be a $p$-group that has the $k$-split decomposition $(A \cap P) \uplus (B_1 \cap P) \uplus \cdots \uplus (B_n \cap P)$ where $k$ is the number of the sets $B_i \cap P$ that are not empty. Note that we have not required that the decomposition be strict, so sets of size one are allowed.  We will prove that $|P:P \cap A| \leqslant (n^2)!$.  If $k < n$, then $(k^2)! \leqslant (n^2)!$, so working by induction on $n$, we may assume that $k = n$.

For the rest of this proof, we assume that $G$ is a $p$-group for some prime $p \leqslant n$.  Suppose $B$ is an abelian normal subgroup of $G$ that is not contained in $A$ and $C_G(B) = B$.  Then as above $|B| \leqslant 2n$.  Also, by the $N/C$-theorem, we have that $$G/B =N_G(B)/C_G (B) \leqslant {\rm Aut} (B) \leqslant {\rm Sym} (B),$$
so $|G:B| \leqslant (2n)!$.  It follows that $$|G:A| < |G| \leqslant (2n) (2n)! \leqslant (n^2)!,$$ which yields the desired conclusion.

Suppose $U$ is an abelian normal subgroup of $G$ that is not contained in $A$.  We claim that there exists a subgroup $B$ normal in $G$ so that $U \leqslant B$, $B$ is abelian, and $B = C_G(B)$.   Observe that $U \leqslant C_G(U)$ and $C_G(U)$ is normal in $G$.  If $U = C_G(U)$, then take $B = U$, and we are done.  Thus, we may assume $U < C_G (U)$. Since $G$ is a $p$-group, we can find $V$ normal in $G$ so that $U < V \leqslant C_G(U)$ and $|V:U| = p$.  Notice that $U$ is central in $V$ and $V/U$ has order $p$, so $V/U$ is cyclic.  This implies that $V$ is abelian.  Also, $C_G (V) \leqslant C_G(U)$, so $|C_G (V):V| < |C_G (U):U|$.  Working by induction, on $|C_G (U):U|$, we obtain the conclusion.  Using the existence of $B$ and the previous paragraph, we see that $|G:A| \leqslant (n^2)!$.

Thus, we may assume that $A$ contains every normal abelian subgroup of $G$ and that $A = C_G (A)$.  Suppose $U$ is a subgroup of $A$ that is maximal such that it is normal in $G$ and $C_G(U)$ is not contained in $A$.  Notice that such a subgroup $U > 1$ must exist since $1 < Z(G) \leqslant A$ and $G = C_G (Z(G))$ is not contained in $A$.  Thus, there is an element $g$ in $G \setminus A$ that centralizes $U$.  Thus, $U \langle g\rangle$ is an abelian subgroup of $G$ that is not contained in $A$.  Observe that $U = U \langle g \rangle \cap A$, and by Lemma \ref{lm-e0} (1), we know that $|U| =  |U\langle g\rangle \cap A| \leqslant n$.  Since $A = C_G (A)$, we know that $U < A$.  Hence, we can find $V$ so that $U < V \leqslant A$, $V$ is normal in $G$, and $|V:U| = p$.  Since $p \leqslant n$, we have $|V| \leqslant n^2$.  Now, $G/C_G (V)$ is a subgroup of ${\rm Aut} (V)$, so $|G:C_G (V)|$ is bounded by $(n^2)!$.  Now, the choice of $V$ implies that $C_G (V) \leqslant A$ and since $V \leqslant A$ and $A$ is abelian, we have $C_G (V) = A$.  Thus, we now have that $|G:A| \leqslant (n^2)!$.
\end{proof}

We do have some cases where we can obtain a better bound.

\begin{lm}
Suppose $G$ has an $n$-split decomposition with respect to $A = 1$, then $|G| \leqslant n!$.
\end{lm}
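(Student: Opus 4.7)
My plan is to reduce this to the same external fact invoked in the proof of Theorem~\ref{three}: for any finite group $H$, if every abelian subgroup of $H$ has order at most $m$, then $|H|\leqslant m!$ (Problem~1D.11 of \cite{isaacs}).

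The first step is to bound the order of every abelian subgroup of $G$. Specializing Lemma~\ref{lm-e0}(1) to the case $A = 1$: any nontrivial abelian subgroup $U\leqslant G$ satisfies $|U\cap A|\cdot(|U:U\cap A|-1) = 1\cdot(|U|-1)\leqslant n$, so $|U|\leqslant n+1$. Since the trivial subgroup trivially satisfies the same bound, every abelian subgroup of $G$ has order at most $n+1$.

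The second step is to feed this into the Isaacs bound. Doing so directly produces $|G|\leqslant(n+1)!$, which is a factor of $n+1$ weaker than the claim $|G|\leqslant n!$; closing this gap is the main obstacle of the argument. The route I would try is to let $M$ be a maximal abelian subgroup of $G$, so that $C_G(M) = M$, and consider the coset action $G\to\mathrm{Sym}(G/M)$ whose kernel is the core $M_G\leqslant M$. If one can force $M_G = 1$---by exploiting the $A = 1$ hypothesis together with the rigid fact that an abelian subgroup of size exactly $n+1$ must meet every block $B_i$ in exactly one element---then $G$ embeds faithfully into $\mathrm{Sym}(G/M)$, giving $|G|\leqslant[G:M]!$, and a short arithmetic comparison using $|M|\leqslant n+1$ should collapse the bound to $|G|\leqslant n!$. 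An alternative route is to try to upgrade the abelian-subgroup bound itself from $n+1$ to $n$ by deriving a contradiction from the rigid one-per-block distribution just mentioned; either refinement amounts to the substantive work of the proof.
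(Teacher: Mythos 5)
Your first two steps are sound: Lemma \ref{lm-e0}(1) with $A=1$ gives $|U|\leqslant n+1$ for every abelian subgroup $U$ of $G$, and Problem 1D.11 of \cite{isaacs} then yields $|G|\leqslant (n+1)!$. But the proof stops there: neither of the two routes you sketch for improving $(n+1)!$ to $n!$ is carried out, and neither works as described. Route (b) --- upgrading the abelian-subgroup bound from $n+1$ to $n$ --- is impossible in general: an abelian subgroup of order exactly $n+1$ meeting each $B_i$ in exactly one element is perfectly consistent, and it actually occurs (in the strict $(2q-1)$-split decomposition of ${\rm Sz}(q)$ with $A=1$ built in Lemma \ref{l7-3}, the maximal abelian subgroups of order $2q=n+1$ are present). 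Route (a) founders on the fact that $Z(G)\leqslant M$ for every maximal abelian subgroup $M$, hence $Z(G)\leqslant M_G$; since the hypothesis is only an $n$-split decomposition (not a strict one), $Z(G)$ need not be trivial, and even when $M_G=1$ the bound $|G|\leqslant |G:M|!$ does not visibly reduce to $n!$. So what you have actually proved is $|G|\leqslant (n+1)!$.

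The paper's proof takes a genuinely different, prime-by-prime route: for each prime $p$ and $P\in{\rm Syl}_p(G)$ it chooses a maximal abelian \emph{normal} subgroup $M$ of $P$, so that $M=C_P(M)$ and $P/M$ embeds in ${\rm Aut}(M)$; with $m=|M|$ this gives $|P|$ dividing $m\cdot|{\rm Aut}(M)|$, which divides $m!$, and since $|G|$ is the least common multiple of its Sylow orders, $|G|$ divides $n!$ \emph{provided} $m\leqslant n$ for every prime. That proviso is exactly the strengthening your route (b) was chasing: Lemma \ref{lm-e0}(1) only gives $m\leqslant n+1$, and when $m=n+1$ (necessarily a $p$-power) the $p$-part of $m!$ can exceed that of $n!$. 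This is not a phantom worry: $Q_8=\{1\}\uplus\{-1\}\uplus\{i,j,k\}\uplus\{-i,-j,-k\}$ is a (non-strict) $3$-split decomposition with $A=1$, yet $|Q_8|=8>3!$, so the statement needs strictness (which forces $Z(G)=1$ by Lemma \ref{lm-e0}(5)) or some other device to exclude $m=n+1$. In short, your approach (one global application of the Isaacs bound) is different from the paper's (Sylow-local automorphism bounds assembled by coprimality), it honestly delivers only $(n+1)!$, and the missing factor of $n+1$ is not a technicality you can expect to dispose of in "a short arithmetic comparison" --- it is precisely the delicate point of the lemma.
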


\begin{proof}
Let $p$ be a prime divisor of $|G|$, and let $P\in {\rm Syl}_p(G)$. Let $M$ be a maximal abelian normal
subgroup of $P$ and set $m = |M|$. Then $M = C_P(M)$.  Thus, $P/M$ is isomorphic to a subgroup of ${\rm Aut} (M) \leqslant S_m$, which forces $|P:M|$ to divide $m!$.  Since $m\leqslant n$, we conclude that $|P|$ divides the $p$-part of $n!$.  The claim follows from the fact that $|G|$ is the least common multiple of the orders of
Sylow subgroups of $G$.
\end{proof}

\begin{lm}
If $G$ has an $n$-split decomposition with respect to $A = Z(G)$ where $|A|=n$, then $|G:A| \leqslant 2n$.
\end{lm}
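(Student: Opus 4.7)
The approach is to extract structural information about $G/A$ from Lemma \ref{lm-e0} (1) applied to the abelian overgroups of $A$ (which are automatically abelian because $A = Z(G)$), and then to count conjugates of a non-central element.

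First, for any $b \in G \setminus A$, the subgroup $U = \langle A, b \rangle$ is abelian and strictly contains $A$, so $|U \cap A| = n$ and $|U:A| \geqslant 2$. Lemma \ref{lm-e0} (1) then yields $n(|U:A|-1) \leqslant n$, forcing $|U:A| = 2$. Hence every nontrivial coset of $A$ in $G$ is an involution, so $G/A$ is an elementary abelian $2$-group; in particular $A$ is normal in $G$ and $G/A$ is abelian.

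Next, I would show that two non-central elements commute exactly when they lie in the same coset of $A$. The ``if'' direction is immediate since $A$ is central. For ``only if,'' suppose $b, c \in G \setminus A$ commute with $bA \neq cA$; then $U = \langle A, b, c \rangle$ is abelian with $|U:A| = |\langle bA, cA \rangle| = 4$, so Lemma \ref{lm-e0} (1) would force $3n \leqslant n$, contradicting $n \geqslant 1$. Consequently $C_G(b) = A \cup bA = \langle A, b \rangle$ has order $2n$ for every $b \in G \setminus A$.

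Finally, since $G/A$ is abelian, every $G$-conjugate of $b$ lies in the coset $bA$, so $|\mathrm{cl}_G(b)| \leqslant |bA| = n$. Combined with the preceding paragraph this gives $|G|/(2n) = |\mathrm{cl}_G(b)| \leqslant n$, whence $|G| \leqslant 2n^2$ and therefore $|G:A| \leqslant 2n$. I anticipate no real obstacle: once the first step unlocks the elementary-abelian structure of $G/A$, everything reduces to pure counting. The only thing worth double-checking is that Lemma \ref{lm-e0} (1) does not require strictness of the decomposition, which indeed it does not.
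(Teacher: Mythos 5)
Your proof is correct, but it takes a genuinely different route from the paper's. The paper argues combinatorially from the decomposition itself: since each coset of $A=Z(G)$ is a commuting set, each $B_i$ meets each nontrivial coset in at most one point, so $|B_i|\leqslant k-1$ where $k=|G:A|$; comparing $|G|=kn$ with $|G|=n+\sum_i|B_i|$ forces equality, so the noncommuting graph is complete $(k-1)$-partite, and the authors then invoke an external result (Proposition 3(ii) of their reference \cite{akm}) to conclude that $G/A$ is elementary abelian of exponent $2$ with all noncentral class sizes equal to $k/2$, after which the containment of each class in a coset of $A$ gives $k/2\leqslant n$. You instead extract the same structural facts directly from Lemma \ref{lm-e0} (1): applying it to $A\langle b\rangle$ with $|U\cap A|=n$ forces $|U:A|=2$, hence $G/A$ is elementary abelian of exponent $2$; applying it to $A\langle b,c\rangle$ for commuting noncentral $b,c$ in distinct cosets gives the contradiction $3n\leqslant n$, hence $C_G(b)=A\cup bA$ has order exactly $2n$; and the final count $|G|/(2n)=|\mathrm{cl}(b)|\leqslant |bA|=n$ is the same closing step as the paper's. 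Your version is self-contained (no appeal to the cited graph-theoretic proposition) and, as you note, visibly independent of strictness, which matches the hypotheses of the statement; the paper's version buys the extra information that the noncommuting graph is complete multipartite and that every class size is exactly $k/2$. Both arguments are sound.
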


\begin{proof}
Let $k = |G:A|$.  As the cosets of the center are commuting subsets of $G$, no $B_i$ can contain more than one element of any coset of $A$, and it cannot contain an element of $A$.  We see that $|B_i| \leqslant k-1$.  On the one hand,  we have $|G| = |G:A||A| = kn$, and on the other hand, we obtain  $$|G| = |A| + \sum_{i=1}^n |B_i| \leqslant n + (k-1)n = kn.$$  We must have equality throughout this inequality, so  $|B_i|=k-1$ for all $i$.  Thus each $B_i$ contains representatives of every nontrivial coset of $A$.  This shows that the complementary graph of $\Delta(G)$, which is called noncommuting graph  of $G$ and denoted by $\nabla(G)$, is a complete $(k-1)$-partite graph.  Now, by Proposition 3(ii) in \cite{akm}, $G/A$ is an elementary abelian $2$-group and the size of the class of $g$, for every noncentral element $g\in G$, is $k/2$.  Finally, since $G/A$ is abelian, the entire conjugacy class of $g$ is contained in the coset $Ag$, which has size $n$. Thus $k/2\leqslant n$, so $k\leqslant 2n$.
\end{proof}

{\em Some examples.} Both $D_8$ and $Q_8$ have $2$-split decompositions with respect to their centers who have order $2$.  There are also many examples where $n$ is large.  One family of examples is the Suzuki $2$-groups (see \cite{Higman}).  We omit the details here.

\begin{center}
 {\bf Acknowledgments }
\end{center}

This work was done during the fourth author had a visiting position at the Department of Mathematical Sciences, Kent State University, USA. He would like to thank the hospitality of the Department of Mathematical Sciences of KSU.  He would like also to express his sincere thanks to Prof.  I. M. Isaacs for helpful comments.  The work of D. V. Lytkina and V. D. Mazurov is supported by grant 17-51-560005 of Russian Foundation of Basic Research.

We would like to thank the referee for a thorough reading of this paper and the resulting helpful suggestions.

\noindent {\sc M. L. Lewis}\\[0.2cm]
{\sc Department of Mathematical Sciences, Kent State
University,}\\ {\sc  Kent, Ohio $44242$, United States of
America}\\[0.1cm]
{\em E-mail address}: {\tt  lewis@math.kent.edu}\\[0.3cm]
{\sc D. V. Lytkina}\\[0.2cm]
{\sc Siberian State University of Telecommunications and Information Sciences,
 Novosibirsk State University, Novosibirsk, Russia}\\[0.1cm]
{\em E-mail address}: {\tt daria.lytkin@gmail.com}\\[0.3cm]
 {\sc V. D. Mazurov}\\[0.2cm]
{\sc Sobolev Institute of Mathematics and Novosibirsk State University,
 Novosibirsk, Russia},\\[0.1cm]
{\em E-mail address:} {\tt mazurov@math.nsc.ru}\\[0.3cm]
 {\sc A. R. Moghaddamfar}\\[0.2cm]
{\sc Faculty of Mathematics, K. N. Toosi
University of Technology,
 P. O. Box $16315$--$1618$, Tehran, Iran,}\\[0.1cm]
 {\sc and}\\[0.1cm]
 {\sc Department of Mathematical Sciences, Kent State
University,}\\ {\sc  Kent, Ohio $44242$, United States of
America}\\[0.1cm]
{\em E-mail addresses:}:  {\tt
moghadam@kntu.ac.ir}, and {\tt amoghadd@kent.edu}\\[0.3cm]

\begin{thebibliography}{99}
\bibitem{akm}  M. Akbari and A. R. Moghaddamfar, The existence or nonexistence of non-commuting graphs with particular properties, {\em J. Algebra Appl.}, 13 (1) (2014), 1350064, 11 pp.

\bibitem{ma}  M. Akbari and A. R. Moghaddamfar, Groups for which the noncommuting graph is a split graph, {\em Int. J. Group Theory}, 6 (1) (2017), 29--35.

\bibitem{Higman} G. Higman, Suzuki $2$-groups, {\em  Illinois J. Math.}, 7 (1963), 79--96.

\bibitem{hup} B. Huppert, ``Endliche Gruppen I,'' Springer-Verlag, Berlin, 1983.

\bibitem{hupII} B. Huppert and N. Blackburn, ``Finite Groups II,'' Springer-Verlag, Berlin, 1982.

\bibitem{hupIII} B. Huppert and N. Blackburn, ``Finite Groups III,'' Springer-Verlag, Berlin, 1982.

\bibitem{isaacs}  I. M. Isaacs, {\em Finite Group Theory}, Graduate Studies in Mathematics, 92. American Mathematical Society, Providence, RI, 2008.

\bibitem{mszz} A. R. Moghaddamfar, W. J. Shi, W. Zhou and A. R. Zokayi, On the noncommuting graph associated with a finite group, {\em  Siberian Math. J.}, 46 (2) (2005), 325--332.

\bibitem{Suzuki2} M. Suzuki, A new type of simple groups of finite order, {\em  Proc. Nat. Acad. Sci. U. S. A.}, 46 (1960) 868--870.

\bibitem{Suzuki} M. Suzuki, On a class of doubly transitive groups, {\em Ann. of Math.}, 75 (1) (1962), 105--145.

\bibitem{Vdovin} E. P. Vdovin, Maximal orders of abelian subgroups in finite simple groups, {\em Algebra and Logic}, 38 (2) (1999), 67--83.

\bibitem{Zenkov} V. I. Zenkov, Intersections of abelian subgroups in finite groups, {\em Math. Notes}, 56 (1-2) (1994), 869--871.

\end{thebibliography}
\end{document}